 \numberwithin{equation}{section}
\theoremstyle{plain}
\newtheorem{thm}{Theorem}[section]
\newtheorem{cor}[thm]{Corollary}
\newtheorem{lem}[thm]{Lemma}
\newtheorem{prop}[thm]{Proposition}
\theoremstyle{definition}
\theoremstyle{remark}
\newtheorem{rem}[thm]{Remark}
\newcommand{\N}{\mathbb{N}}
\newcommand{\R}{\mathbb{R}}
\newcommand{\bp}{\begin{proof}[\ensuremath{\mathbf{Proof}}]}
\newcommand{\bs}{\begin{proof}[\ensuremath{\mathbf{Solution}}]}
\newcommand{\ep}{\end{proof}}
\newcommand{\be}{\begin{equation}}
\newcommand{\ee}{\end{equation}}
\begin{document}

\title{An eradication time problem for the SIR model}

\author{Ryan Hynd\footnote{Department of Mathematics, University of Pennsylvania. }\;, Dennis Ikpe\footnote{Department of Statistics and Probability, Michigan State University. African Institute for Mathematical Sciences, South Africa.}\;, and Terrance Pendleton\footnote{Department of Mathematics, Drake University.}\;}

\maketitle 

\begin{abstract} 
We consider a susceptible, infected, and recovered infectious disease model which incorporates a vaccination rate. In particular, we study the problem of choosing the vaccination rate in order to reduce the number of infected individuals to a given threshold as quickly as possible. This is naturally a problem of time-optimal control.  We interpret the optimal time as a solution of two dynamic programming equations and give necessary and sufficient conditions for a vaccination rate to be optimal.
\end{abstract}

{\bf Keywords}: Compartmental models, time-optimal control, viscosity solutions, Pontryagin's maximum principle


\section{Introduction}
The SIR infectious disease model in epidemiology involves the system of ODE 
\be\label{SIR}
\begin{cases}
\dot S=-\beta S I\\
\dot I=\beta SI -\gamma I\\
\dot R=\gamma I.
\end{cases}
\ee 
Here $S,I,R: [0,\infty)\rightarrow \R$ represent the susceptible, infected, and recovered compartments of a total population, and $\beta>0$ and $\gamma>0$ are the respective infected and recovery rates per unit time.  It is also clear that once $S, I$ are determined then $R$ is known. As a result, we only need to consider the first two equations.

\begin{figure}[h]
\centering
 \includegraphics[width=.9\textwidth]{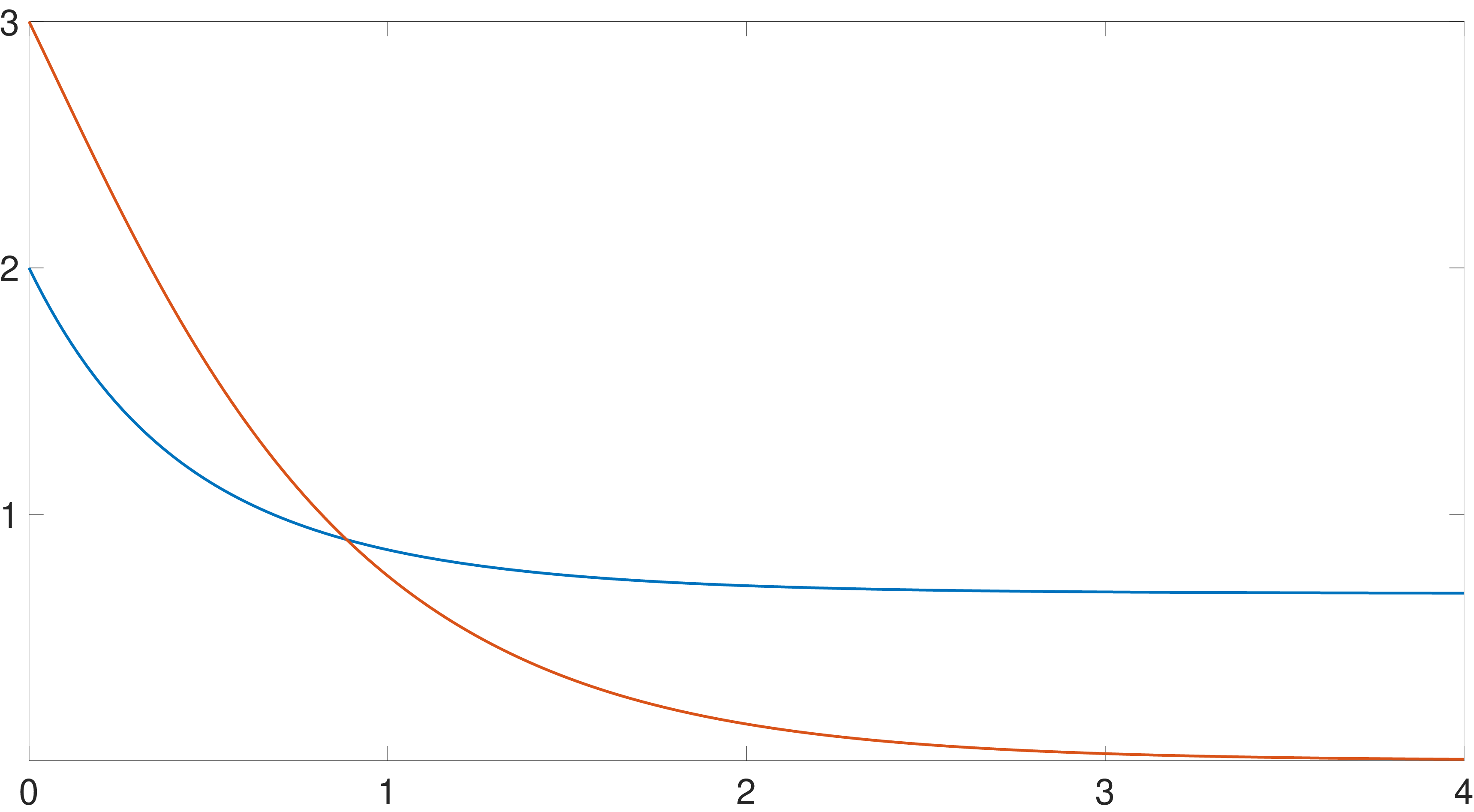}
 \caption{Solution of the SIR system with $S(0)=2$, $I(0)=3$, $\beta=1/2$, and $\gamma=2$. The graph of $S$ is shown in blue, and the graph of $I$ is shown in red. Note that $\beta S(0)\le \gamma$ so that $I$ is decreasing.}\label{SIdecrease}
\end{figure}

\par It is not hard to see that any solution $S, I$ of \eqref{SIR} with initial conditions $S(0), I(0)>0$, remains positive and bounded with $S$ decreasing. Moreover, if
$$
\beta S(0)\le \gamma,
$$
then $I$ is also decreasing. Otherwise, $I$ increases for an interval of time and decreases from then on. And in either case, 
$$
\lim_{t\rightarrow\infty}I(t)=0.
$$ 
\begin{figure}[h]
\centering
 \includegraphics[width=.9\textwidth]{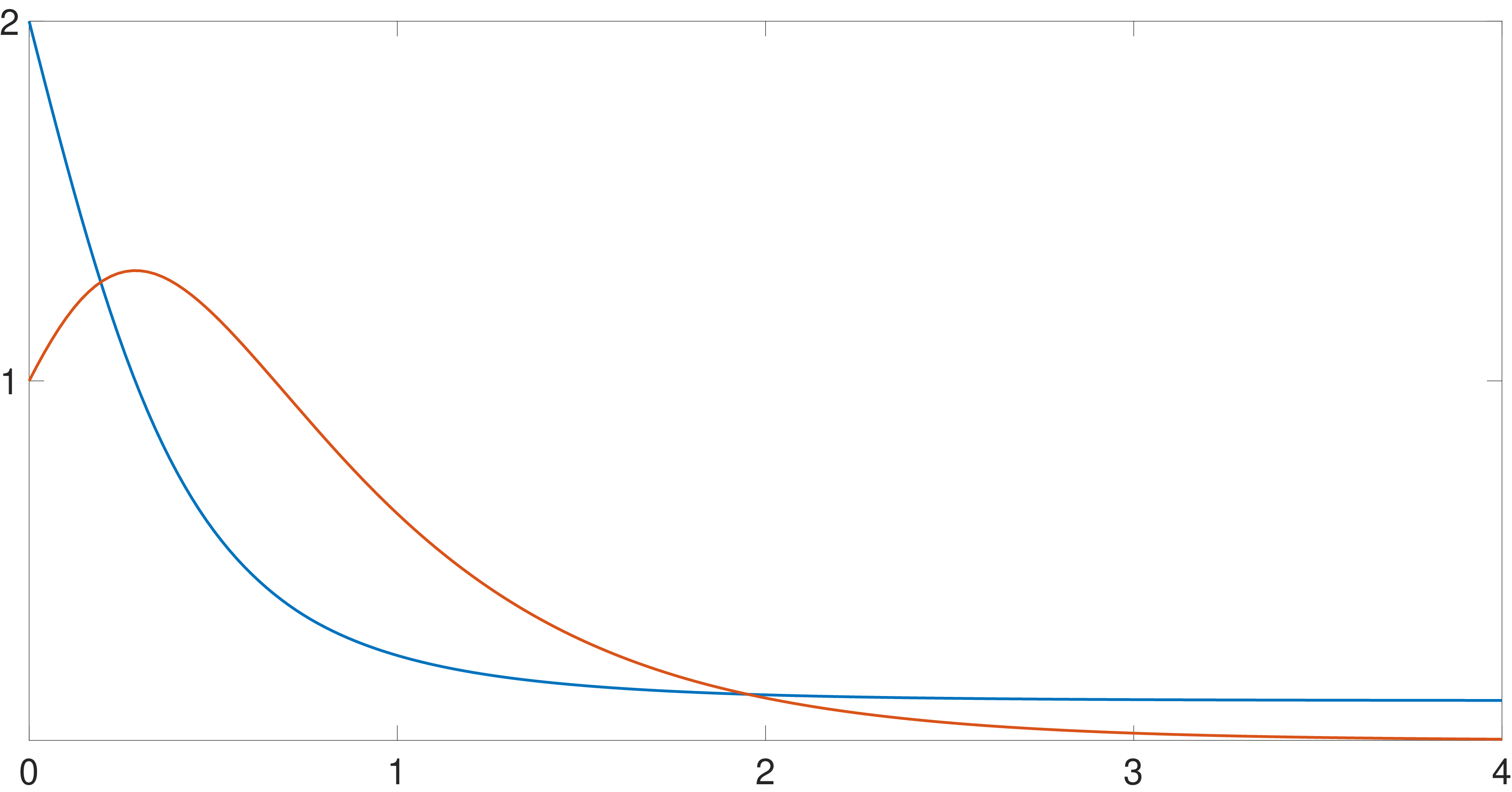}
 \caption{Solution of the SIR system with $S(0)=2$, $I(0)=1$, $\beta=2$, and $\gamma=2$. The graph of $S$ is shown in blue, and the graph of $I$ is shown in red. Note that $\beta S(0)> \gamma$ so that $I$ increases for an interval and then decreases to $0$.}\label{Iincthendec}
\end{figure}

\subsection{Controlled SIR}
\par In this note, we will consider the following analog of the SIR system
\be\label{SIRcontrol}
\begin{cases}
\dot S=-\beta S I-r S\\
\dot I=\beta SI -\gamma I 
\end{cases}
\ee 
where $r: [0,\infty)\rightarrow [0,1]$ represents a {\it vaccination rate control} of the SIR model.  This rate is conveniently limited by the upper bound 1; other constant upper bounds would lead to virtually the same theory which we present below.  Even though we have piecewise continuous controls $r$ in mind, it will be advantageous for us to consider \eqref{SIRcontrol} for each $r$ 
belonging to the collection  
$$
{\cal A}:=\{r\in L^\infty([0,\infty)): 0\le r(t)\le 1, \;\text{a.e. } t\ge 0\}
$$
of {\it admissible} vaccination rate controls. 

\par We'll see that for any $r\in {\cal A}$, there is a unique solution $S^r,I^r$ of \eqref{SIRcontrol} for given initial conditions $S^r(0), I^r(0)\ge 0$.  
Moreover, these solutions have the same qualitative properties of solutions to the uncontrolled SIR system \eqref{SIR} which we described above. 
In particular, the first time that the number of infectious individuals $I^r(t)$ falls below a given threshold $\mu>0$
$$
u^r=\inf\{t> 0: I^r(t)\le\mu\}
$$
is finite.  When $\mu$ is small, we can think of this time as an {\it eradication time}. In this paper, we will address the question: 
$$
\text{How do we choose a vaccination rate $r\in {\cal A}$ to minimize the eradication time $u^r$?}
$$

\par For this problem, Pontryagin's maximum principle \cite{MR0166037} asserts the following necessary conditions on an optimal vaccination rate $r$.  
\\\\
{\bf Necessary conditions for an optimal vaccination rate $r\in {\cal A}$}. There are absolutely continuous $P,Q:[0,u^r]\rightarrow \R$
such that the following statements hold.  
\begin{enumerate}[$(i)$]

\item $P, Q$ satisfy the ODE 
\be
\begin{cases}
\dot P(t) = (\beta I^r(t)+r(t))P(t)-\beta I^r(t)Q(t)\\
\dot Q(t) = \beta S^r(t)P(t)+(\gamma-\beta S^r(t))Q(t)
\end{cases}
\ee
for almost every $t\in [0,u^r]$.

\item $P(u^r)=0$ and $Q(u^r)\neq 0$.

\item $r(t)P(t)=P(t)^+$ for almost every $t\in [0,u^r]$.

\item For all $t\in [0,u^r]$, 
\be 
\beta S^r(t)I^r(t) P(t)+S^r(t)P(t)^++(\gamma-\beta S^r(t))I^r(t)Q(t)=1.
\ee
\end{enumerate}
\begin{rem}
When $r\in {\cal A}$ is an optimal vaccination rate,  we will specifically refer to the conditions above as the associated {\it necessary conditions $(i)-(iv)$}. 
\end{rem}

\par  In a recent paper \cite{MR3688684}, Bolzoni, Bonacini, Soresina, and Groppi used these necessary conditions to show that any optimal vaccination rate $r$ is of the form 
\be\label{SwitchingTimeControl}
r_\tau(t)=
\begin{cases}
0, \quad t\in [0,\tau]\\
1,\quad t\in (\tau,\infty)
\end{cases}
\ee
for some $\tau\ge 0$.  That is, any optimal vaccination rate will switch from not controlling the SIR system at all on $[0,\tau]$  to maximally controlling the SIR system on $(\tau,\infty)$. 
As a result, $\tau$ is interpreted as an optimal switching time. The corresponding vaccination rate $r_\tau$ is a ``bang-bang" control as it only takes on the extreme values in the 
interval $[0,1]$ in which each vaccination rate $r$ may assume.

\subsection{The dynamic programming approach}
\par In what follows, we will study this eradication time problem from the standpoint of dynamic programming.  To this end,  we will consider the eradication time function 
\be
u^r(x,y):=\inf\{t> 0: I^r(t)=\mu\}
\ee
for a given vaccination rate $r\in {\cal A}$. Here $S^r$ and $I^r$ satisfy \eqref{SIRcontrol} for this $r$ and initial conditions $S^r(0)=x\ge 0$ and $I^r(0)=y\ge \mu.$
A crucial property of $u^r$ is that for each $t\in [0,u^r(x,y)]$, 
\be\label{babyDPP}
u^r(x,y)=t+u^r(S^r(t),I^r(t)).
\ee
That is, after $t$ units of time, the time remaining for $I^r$ to decease to $\mu$ is simply $u^r(x,y)-t$. 

\par The corresponding {\it value function} is defined as
\be
u(x,y):=\min_{r\in {\cal A}} u^r(x,y)
\ee
for $x\ge 0$ and $y\ge \mu$.  Employing \eqref{babyDPP}, we will show that $u$ satisfies the {\it dynamic programming principle} 
\be
u(x,y)=\min_{r\in {\cal A}}\{t+u(S^r(t),I^r(t))\}
\ee
for $t\ge 0$. A direct consequence of dynamic programming is that $u$ is a viscosity solution of a Hamilton-Jacobi-Bellman (HJB) equation
\be\label{HJB}
\beta xy \partial_xu+x(\partial_xu)^++(\gamma-\beta x)y\partial_yu=1
\ee
in $(0,\infty)\times(\mu,\infty)$.

\par It follows from the definition of $u$ that 
\be\label{BCsegment}
u(x,\mu)=0\; \text{ for $0\le x\le \gamma/\beta$}
\ee
and
\be\label{BCsegment2}
u(0,y)=\frac{1}{\gamma}\ln\left(\frac{y}{\mu}\right)\; \text{ for $y\ge \mu$}.
\ee
Moreover, we will show
\be\label{BCatInfinity}
\lim_{ x+y\rightarrow\infty}u(x,y)=\infty.
\ee
It turns out that $u$ is the unique continuous viscosity solution of \eqref{HJB} which satisfies these three conditions.
\begin{thm}\label{thm0}
The value function $u$ is the unique continuous viscosity solution of \eqref{HJB} which satisfies \eqref{BCsegment}, \eqref{BCsegment2}, and 
\eqref{BCatInfinity}.
\end{thm}

\par We will also argue that $u$ is twice differentiable almost everywhere and its Hessian is essentially bounded from above in each compact subset of $(0,\infty)\times(\mu,\infty)$.  This follows from the following theorem.

\begin{thm}\label{thm1}
For each convex, compact $K\subset (0,\infty)\times(\mu,\infty)$, 
there is a constant $L$ such that
$$
u(x,y)-\frac{L}{2}(x^2+y^2)
$$ 
is concave on $K$.
\end{thm}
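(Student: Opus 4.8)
The plan is to show that $u$ is \emph{locally semiconcave}, which on a convex set is exactly the asserted property, via the classical perturbation argument for minimal‑time functions: use an optimal vaccination rate at a base point as a competitor at nearby points, and control the resulting change in eradication time by a second‑order Taylor estimate for the flow. Fix the convex compact $K\subset(0,\infty)\times(\mu,\infty)$ and let $x_{\min},x_{\max},y_{\min},y_{\max}$ be the extreme coordinate values on $K$, so $0<x_{\min}$ and $\mu<y_{\min}$. Since $u\le u^0$ and the uncontrolled trajectory from any $z\in K$ hits $I=\mu$ in a time bounded over $K$, there is a finite $T_{\max}$ with $u\le T_{\max}-1$ on $K$. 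I would first record two estimates, uniform over all $r\in\mathcal A$ and all $z$ in a fixed neighborhood of $K$. \emph{(a) Confinement and variational bounds:} because $S^r$ is nonincreasing and $I^r$ grows at most exponentially, every such trajectory stays in a fixed compact $\RR\subset(0,\infty)^2$ on $[0,T_{\max}]$; hence the first‑ and second‑order variational equations for $\partial_z(S^r,I^r)$ and $\partial_z^2(S^r,I^r)$ have coefficients (state‑derivatives of the vector field, depending on $r$ only affinely and on $(S,I)$ only through $\RR$) that are bounded, so by Gr\"onwall these derivatives are bounded on $[0,T_{\max}]$, and moreover Lipschitz in $t$ there, uniformly in $r,z$. \emph{(b) Transversality at the eradication time:} for $z\in K$ put $\tau_z:=\sup\{t\ge0:\beta S^r(t;z)\ge\gamma\}$ with $\sup\varnothing=0$; then $\tau_z<\infty$, $I^r$ is nondecreasing on $[0,\tau_z]$ and strictly decreasing afterwards, so $I^r\ge\mu$ up to the first hitting time $T^r(z)$ of the level $\mu$ and $T^r(z)>\tau_z$. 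Using $\dot S^r/S^r=-(\beta I^r+r)\le-\beta\mu$ on $[0,T^r(z)]$ and $\beta S^r(\tau_z;z)\le\gamma$ gives $S^r(T^r(z);z)\le(\gamma/\beta)e^{-\beta\mu(T^r(z)-\tau_z)}$, while $\dot I^r\ge-\gamma I^r$ and $I^r(\tau_z;z)\ge y_{\min}$ give $T^r(z)-\tau_z\ge T_0:=\gamma^{-1}\log(y_{\min}/\mu)>0$; combining, $-\dot I^r(T^r(z);z)=\mu\bigl(\gamma-\beta S^r(T^r(z);z)\bigr)\ge\mu\gamma\bigl(1-e^{-\beta\mu T_0}\bigr)=:\mu\delta>0$, with $\delta$ depending only on $K$.

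With these in hand, fix $z_0\in K$ and take an optimal rate $r$ at $z_0$ (it exists: by \eqref{SwitchingTimeControl} it is some $r_\tau$; alternatively work with $1/k$‑optimal rates, since the constants below do not depend on the control), so $I^r(u(z_0);z_0)=\mu$. Set $\phi(h):=T^r(z_0+h)$ and $g(t,h):=I^r(t;z_0+h)$. From $u(z_0\pm h)\le u^r(z_0\pm h)=\phi(\pm h)$ and $u(z_0)=\phi(0)$ we get $u(z_0+h)+u(z_0-h)-2u(z_0)\le\phi(h)+\phi(-h)-2\phi(0)$, so it suffices to bound the right side by $L|h|^2$ with $L$ uniform over $z_0\in K$. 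By (a), $g$, $\partial_tg=I^r(\beta S^r-\gamma)$ and $\partial_hg$ are jointly continuous near $(u(z_0),0)$, hence $g\in C^1$ there; by (b), $\partial_tg(u(z_0),0)=-\mu\delta<0$. The implicit function theorem then gives $\phi\in C^1$ near $0$, Lipschitz with uniform constant, and $\phi'(h)=-\partial_hg(\phi(h),h)/\partial_tg(\phi(h),h)$. The crucial point is that, again by (a), $\partial_tg$ and $\partial_hg$ are jointly \emph{Lipschitz} near $(u(z_0),0)$ — Lipschitz in $h$ because $\partial_h^2g$ and $\partial_h\partial_tg$ are bounded, Lipschitz in $t$ because $\partial_t^2g$ and $\partial_t\partial_hg$ are bounded, the only terms carrying $r(t)$ being bounded because $0\le r\le 1$ — and $|\partial_tg|\ge\mu\delta/2$ there. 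Thus $\phi'$ is a composition of Lipschitz maps, so $\phi\in C^{1,1}$ near $0$ with a constant $L$ uniform in $z_0$, whence $\phi(h)+\phi(-h)-2\phi(0)=\int_0^1\bigl(\phi'(sh)-\phi'(-sh)\bigr)\cdot h\,ds\le L\,|h|^2$.

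Putting the two displays together, for each $z_0\in K$ there is $\rho>0$ (uniform over $K$) such that $u(z_0+h)+u(z_0-h)-2u(z_0)\le L|h|^2$ whenever $|h|<\rho$; equivalently $u(x,y)-\tfrac L2(x^2+y^2)$ is midpoint concave on $B(z_0,\rho)\cap K$. Since $u$ is continuous, this function is concave on each such ball, and as $K$ is convex, local concavity forces concavity on all of $K$, which is the theorem. \textbf{The main obstacle} I anticipate is the $C^{1,1}$ regularity of $\phi$ in the second step: because admissible controls are only $L^\infty$ in time, $t\mapsto I^r(t;z)$ is only $C^{1,1}$, so $g$ is \emph{not} $C^2$ jointly and one cannot simply differentiate the implicit‑function formula for $\phi'$. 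The resolution, sketched above, is that every occurrence of $r(t)$ in $\partial_t^2g$ and $\partial_t\partial_hg$ is multiplied by bounded quantities, so these second derivatives are bounded — exactly enough to make $\partial_tg,\partial_hg$ Lipschitz and hence $\phi'$ Lipschitz, which is the regularity actually needed. The remaining work is the bookkeeping needed to confirm that all constants ($T_{\max}$, the set $\RR$, the Gr\"onwall bounds, $\delta$, $\rho$, and $L$) are genuinely uniform over $z_0\in K$ and $r\in\mathcal A$.
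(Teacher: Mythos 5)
Your proposal is correct and rests on the same engine as the paper's proof: uniform-in-$r$ bounds on the first and second variations of the flow via Gr\"onwall (your step (a) is the content of Lemma \ref{DerivativeBoundsPhi2}), a uniform transversality bound $\gamma-\beta S^r(u^r)\ge\epsilon$ at the hitting time (the lemma containing \eqref{NondegdotIarr}), and the implicit hitting-time equation \eqref{impliciturequation} to convert these into a semiconcavity estimate for $u^r$ with a constant independent of $r$, which then passes to $u$ because $u$ is the infimum of the $u^r$. Where you genuinely deviate is in two technical places, both defensible and arguably cleaner. First, the paper proves the second-derivative bounds on $u^r$ only for smooth $r$ (differentiating \eqref{impliciturequation} twice) and then reaches general $r\in\mathcal{A}$ by weak* approximation with smooth controls plus the continuity \eqref{Convergenceurkayxkayykay}, passing the midpoint-concavity inequality to the limit (Corollary \ref{uarrQuasiconcave}); you instead work with measurable $r$ directly, observing that every occurrence of $r(t)$ in the relevant second derivatives is multiplied by bounded quantities, so $\partial_t g,\partial_h g$ are Lipschitz and the implicit function $\phi$ is $C^{1,1}$ with a uniform constant --- this avoids the approximation step, at the cost of an implicit-function argument with only Lipschitz (rather than $C^1$) derivative data, which you correctly identify and handle. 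Second, your transversality constant is explicit and quantitative ($\gamma(1-e^{-\beta\mu T_0})$ with $T_0=\gamma^{-1}\log(y_{\min}/\mu)$), whereas the paper obtains $\epsilon$ by a compactness/contradiction argument using weak* convergence; your version is more informative. One small point to tighten: you define $\phi(h):=T^r(z_0+h)$ as the \emph{first} hitting time and then apply the implicit function theorem, which only produces \emph{a} crossing branch through $(u(z_0),0)$; either argue (via your quantitative lower bound for $I^r(\cdot;z_0)$ above $\mu$ before $u(z_0)$ and Lipschitz dependence on initial data) that no earlier crossing occurs, or simply note that the first hitting time is at most the branch value and agrees with it at $h=0$, so the chain $u(z_0\pm h)\le u^r(z_0\pm h)\le\tilde\phi(\pm h)$ together with $u^r(z_0)=\tilde\phi(0)$ already yields the second-difference bound; with that remark the argument is complete.
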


\par Using the fact that each optimal control is of the form $r_\tau$ for some $\tau$, we will also derive the following representation of the value function. Note that this allows 
us to give a sufficient condition for a vaccination rate $r_\tau$ to be optimal. 
\begin{thm}\label{thm2}
Suppose $S, I$ is the solution of the SIR system \eqref{SIR} with $S(0)=x\ge 0$ and $I(0)=y\ge \mu$. Then 
\be\label{SecondFormulaU}
u(x,y)=\min_{\tau\ge 0}\{\tau+u^{r_0}(S(\tau),I(\tau))\}.
\ee
Moreover, any $\tau$ for which the minimum in \eqref{SecondFormulaU} is achieved corresponds to an optimal vaccination rate $r_\tau$, and
\be\label{minTimeTau}
\tau^*=\min\{t\ge 0: u(S(t),I(t))=u^{r_0}(S(t),I(t)) \}
\ee
is a minimizing time. 
\end{thm}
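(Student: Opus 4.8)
The plan is to deduce the formula \eqref{SecondFormulaU} from the bang-bang structure of optimal controls together with a shift property of the eradication-time functionals, and then to pin down the distinguished minimizer $\tau^*$ using the dynamic programming principle.

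First I would record the shift (semigroup) identity: for $r\in\mathcal{A}$ and $t$ not exceeding the eradication time, uniqueness of solutions of \eqref{SIRcontrol} gives $u^r(x,y)=t+u^{\tilde r}(S^r(t),I^r(t))$, where $\tilde r(\cdot)=r(t+\cdot)$; this is just \eqref{babyDPP} with $u$ replaced by $u^r$. Applying it to $r=r_\tau$ at time $t=\tau$, and using that $r_\tau$ vanishes on $[0,\tau]$ (so $(S^{r_\tau},I^{r_\tau})$ coincides there with the uncontrolled trajectory $(S,I)$) while the time-$\tau$ shift of $r_\tau$ is the constant control $r_0$, I obtain
\be
u^{r_\tau}(x,y)=\tau+u^{r_0}(S(\tau),I(\tau))
\ee
whenever $I(\tau)\ge\mu$. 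Since $u=\min_{r\in\mathcal{A}}u^r$, the right-hand side dominates $u(x,y)$ for every such $\tau$; for the reverse inequality I would take a control $r^*$ with $u^{r^*}(x,y)=u(x,y)$, invoke \cite{MR3688684} to write $r^*=r_{\tau_0}$, and apply the displayed identity at $\tau_0$. This establishes \eqref{SecondFormulaU}, and the minimum is attained because $\tau\mapsto\tau+u^{r_0}(S(\tau),I(\tau))$ is continuous on the compact set $\{\tau:I(\tau)\ge\mu\}$. If $\bar\tau$ attains it, then $u^{r_{\bar\tau}}(x,y)=\bar\tau+u^{r_0}(S(\bar\tau),I(\bar\tau))=u(x,y)$, so $r_{\bar\tau}$ is optimal by definition; this is the ``moreover'' clause.

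Next I would show that along the uncontrolled trajectory the value decays linearly up to any minimizing time. Fix a minimizer $\bar\tau$. For $t\in[0,\bar\tau]$ the dynamic programming principle, used with the suboptimal choice $r\equiv 0$, gives $u(x,y)\le t+u(S(t),I(t))$; and the shift identity for the optimal control $r_{\bar\tau}$ gives $u(x,y)=u^{r_{\bar\tau}}(x,y)=t+u^{r_{\bar\tau-t}}(S(t),I(t))\ge t+u(S(t),I(t))$, since $u$ is the infimum over all controls. Hence $u(S(t),I(t))=u(x,y)-t$ on $[0,\bar\tau]$. Taking $t=\bar\tau$ and combining with $u(x,y)=\bar\tau+u^{r_0}(S(\bar\tau),I(\bar\tau))$ shows $u(S(\bar\tau),I(\bar\tau))=u^{r_0}(S(\bar\tau),I(\bar\tau))$, so the set in \eqref{minTimeTau} is nonempty; being closed (by continuity of $u$, of $u^{r_0}$, and of the trajectory) and bounded below, it has a least element $\tau^*\le\bar\tau$. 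Finally, evaluating the linear-decay identity at $t=\tau^*\in[0,\bar\tau]$ gives $u(S(\tau^*),I(\tau^*))=u(x,y)-\tau^*$, while the definition of $\tau^*$ gives $u(S(\tau^*),I(\tau^*))=u^{r_0}(S(\tau^*),I(\tau^*))$; together these say $\tau^*+u^{r_0}(S(\tau^*),I(\tau^*))=u(x,y)$, i.e.\ $\tau^*$ realizes the minimum in \eqref{SecondFormulaU}.

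The conceptual content is short once the structural result of \cite{MR3688684} is granted; the part I expect to require the most care is the bookkeeping around the domain of $u^{r_0}$ — restricting the minimization in \eqref{SecondFormulaU} to $\tau$ with $I(\tau)\ge\mu$, checking the shift identity only up to the eradication time, and verifying that an optimal switching time cannot overshoot this range, where the monotonicity and unimodality of $I$ noted in the introduction enter. A secondary technical point is the continuity of $u^{r_0}$ on $\{x\ge 0,\ y\ge\mu\}$, which I would obtain from continuous dependence for \eqref{SIRcontrol} together with transversality of the trajectory to $\{I=\mu\}$ at the eradication instant.
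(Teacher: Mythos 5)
Your proposal is correct and takes essentially the same route as the paper: the shift identity $u^{r_\tau}(x,y)=\tau+u^{r_0}(S(\tau),I(\tau))$, the bang--bang result of \cite{MR3688684} to obtain \eqref{SecondFormulaU} and the optimality of any minimizing $r_\tau$, and dynamic programming along the optimal trajectory to identify $\tau^*$ as a minimizing time. Your direct derivation of $u(S(t),I(t))=u(x,y)-t$ on $[0,\bar\tau]$ is precisely the content of Proposition \ref{DPPprop} that the paper invokes, and your added bookkeeping (restricting to $\tau$ with $I(\tau)\ge\mu$, closedness of the set defining $\tau^*$) only makes the argument more careful than the paper's own.
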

\begin{rem}
In \eqref{SecondFormulaU}, $r_0$ is the switching time \eqref{SwitchingTimeControl} with $\tau=0$. 
\end{rem}

\par Equation \eqref{SecondFormulaU} also implies $u$ is a viscosity solution of 
\be\label{HJB2}
\max\{\beta xy \partial_xu+(\gamma-\beta x)y\partial_yu-1, u-u^{r_0}\}=0
\ee
in $(0,\infty)\times(\mu,\infty)$. Finding a solution of this PDE is sometimes called a ``free boundary" problem as if we happened to know the region 
$$
{\cal S}:=\{(x,y)\in(0,\infty)\times(\mu,\infty): u(x,y)=u^{r_0}(x,y) \},
$$
we could solve the PDE 
$$
\beta xy \partial_xu+(\gamma-\beta x)y\partial_yu=1
$$
in the complement of ${\cal S}$ subject to the boundary condition $u=u^{r_0}$ in order to obtain $u$. In addition, we can use this set to express $\tau^*$ defined in \eqref{minTimeTau} as the first time $t$ for which $(S(t),I(t))\in {\cal S}$. 

\par Finally, we will employ the value function $u$ to verify the necessary conditions which follow from Pontryagin's maximum principle. 
\begin{thm}\label{thm3}
Let $x> 0$ and $y>\mu$ and choose $r\in {\cal A}$ such that 
$$
u:=u(x,y)=u^r(x,y).
$$
Define
$$
P(t)=\partial_xu^r(S^r(t),I^r(t))\quad\text{and}\quad Q(t)=\partial_yu^r(S^r(t),I^r(t))
$$
for $t\in [0,u]$. Then $P,Q$ satisfy the necessary conditions $(i)-(vi)$. 
\end{thm}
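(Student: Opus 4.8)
The plan is to read off each of the necessary conditions $(i)$–$(iv)$ from the behaviour of the value function $u$ along the optimal trajectory. Write $\mathbf x(t)=(S^r(t),I^r(t))$ for $t\in[0,u]$. Since $r$ attains the minimum defining $u$, its tail is optimal from each state $\mathbf x(t)$, so \eqref{babyDPP} and the optimality of $r$ give
\be\label{uxt}
u(\mathbf x(t))=u-t,\qquad t\in[0,u].
\ee
Recall that $u=u^r$ is a viscosity solution of \eqref{HJB} and, by Theorem \ref{thm1}, is semiconcave on compact subdomains. The first step is to show that $u$ is differentiable at $\mathbf x(t)$ for $t\in[0,u)$ — so that $P(t),Q(t)$ are well defined — and that $t\mapsto(P(t),Q(t))$ is absolutely continuous. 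Combining the semiconcavity of $u$ with the structure of optimal controls \eqref{SwitchingTimeControl}, according to which the optimal $r$ equals $r_\tau$ for some $\tau$, one obtains more: $u$ is $C^1$ near $\mathbf x(\tau)$, and for $t\neq\tau$ it is twice differentiable in a neighbourhood of $\mathbf x(t)$, where it solves the linear equation obtained from \eqref{HJB} by freezing the control (at $r\equiv0$ for $t<\tau$ and at $r\equiv1$ for $t>\tau$).

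Granting this, conditions $(iv)$, $(iii)$, $(ii)$ are short. Because $u$ is a viscosity solution of \eqref{HJB} and is differentiable at $\mathbf x(t)$, the equation holds there classically, and substituting $x=S^r(t)$, $y=I^r(t)$, $\partial_xu=P(t)$, $\partial_yu=Q(t)$ is precisely $(iv)$. Differentiating \eqref{uxt} and using the chain rule with \eqref{SIRcontrol} yields, for a.e.\ $t$,
\be
\beta S^r(t)I^r(t)P(t)+r(t)S^r(t)P(t)+(\gamma-\beta S^r(t))I^r(t)Q(t)=1 ;
\ee
subtracting $(iv)$ and cancelling $S^r(t)>0$ gives $r(t)P(t)=P(t)^+$, which is $(iii)$. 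For $(ii)$: $I^r(u)=\mu$, and since $t=u$ is the first time $\mathbf x$ meets $\{I=\mu\}$ while $I^r$ has the unimodal profile described in the introduction, the crossing is transversal, $\beta S^r(u)<\gamma$; the boundary condition $u(\cdot,\mu)\equiv0$ on $[0,\gamma/\beta]$ then forces $P(u)=0$, and \eqref{HJB} at $(S^r(u),\mu)$ reduces to $(\gamma-\beta S^r(u))\mu\,Q(u)=1$, so $Q(u)=\bigl((\gamma-\beta S^r(u))\mu\bigr)^{-1}\neq0$.

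The substantive part is the adjoint system $(i)$. Fix $t\neq\tau$. By $(iii)$ and continuity of the sign of $\partial_xu$ near $\mathbf x(t)$, the equation \eqref{HJB} reads $\beta xy\,\partial_xu+r(t)x\,\partial_xu+(\gamma-\beta x)y\,\partial_yu=1$ in a neighbourhood of $\mathbf x(t)$. Differentiating this in $x$ and rearranging so the second-order terms stand alone, the left side equals $-\tfrac{d}{dt}\bigl(\partial_xu(\mathbf x(t))\bigr)=-\dot P(t)$ along $\mathbf x$ by the chain rule and \eqref{SIRcontrol}, while the right side equals $-(\beta I^r(t)+r(t))P(t)+\beta I^r(t)Q(t)$; after the sign flip this is the first equation of $(i)$. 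Differentiating in $y$ likewise produces $\dot Q(t)=\beta S^r(t)P(t)+(\gamma-\beta S^r(t))Q(t)$. Since $P,Q$ are continuous on all of $[0,u]$, both identities extend to $t=\tau$, completing $(i)$.

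I expect the main obstacle to be the regularity asserted in the first paragraph: that $u$ is genuinely $C^2$ near $\mathbf x(t)$ for $t\neq\tau$ — rather than merely twice differentiable a.e.\ in the plane, as guaranteed by Theorem \ref{thm1} — which amounts to controlling the free boundary $\partial\mathcal S$ of \eqref{HJB2} and checking a non-degeneracy of the switch, together with $u\in C^1$ across $\mathbf x(\tau)$. If one prefers to avoid proving such pointwise regularity, the adjoint equations can instead be obtained by a softer argument based on Theorem \ref{thm1}: semiconcavity makes $P$ and $Q$ Lipschitz in $t$; mollifying $u$, differentiating the mollified equation along $\mathbf x$, and letting the mollification parameter tend to zero — using that the optimal trajectory meets the singular set of $u$ for only a negligible set of times — recovers $(i)$ for a.e.\ $t$, hence everywhere by absolute continuity.
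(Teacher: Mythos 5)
There is a genuine gap, and it comes from reading $P,Q$ as derivatives of the \emph{value function} $u$ rather than of the fixed-control time $u^r$, which is what the theorem actually uses. Your entire first step — differentiability of $u$ along the trajectory, $u\in C^1$ near $\mathbf x(\tau)$, $u\in C^2$ off the switch, control of the free boundary of \eqref{HJB2} — is nowhere available: Theorem \ref{thm1} gives only local semiconcavity, hence one-sided Hessian bounds and twice differentiability \emph{almost everywhere}, which says nothing about regularity at the specific points $\mathbf x(t)$, and free-boundary regularity for \eqref{HJB2} is a hard unaddressed problem. The fallback is not sound either: semiconcavity alone does not make $t\mapsto(P(t),Q(t))$ Lipschitz (it bounds second differences from one side only), and the claim that the optimal trajectory meets the singular set of $u$ for a negligible set of times is unjustified. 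Moreover your derivation of $(i)$ differentiates the PDE, i.e.\ needs second derivatives of $u$ along the trajectory — precisely the missing regularity — and your argument for $P(u)=0$ needs differentiability of $u$ up to the boundary segment $\{y=\mu\}$, also unproven. You also invoke the bang-bang form $r=r_\tau$ from \cite{MR3688684}, which the statement does not require.

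The paper's route avoids all of this because $u^r$ is already known to be $C^1$ on $(0,\infty)\times(\mu,\infty)$ (Corollary \ref{uarrSmoothCor}, via the implicit function theorem applied to $\Phi^r_2(x,y,u^r(x,y))=\mu$), with boundary values $\partial_xu^r(x,\mu)=0$, $\partial_yu^r(x,\mu)=-1/((\gamma-\beta x)\mu)$ for $0<x<\gamma/\beta$ (Remark \ref{Extenduxuy}). Conditions $(iii)$ and $(iv)$ follow by using $u^r$ as a smooth test function: along the optimal trajectory $u(\mathbf x(t))=u^r(\mathbf x(t))$ while $u\le u^r$ everywhere, so the viscosity subsolution inequality for $u$ holds with $\nabla u^r$ in place of $\nabla u$, and combining it with the exact identity $\tfrac{d}{dt}u^r(\mathbf x(t))=-1$ forces equality in $(iv)$ and the switching relation $(iii)$ (Proposition \ref{OptimalSynthesisProp}); no differentiability of $u$ is used. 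Condition $(ii)$ then follows from Corollary \ref{Sulessthan} and evaluating $(iv)$ at $t=u$. For $(i)$, instead of differentiating the PDE, the paper differentiates the identity $u^r(\Phi^r_1(x,y,t),\Phi^r_2(x,y,t))=u^r(x,y)-t$ in $(x,y)$, obtaining $Z(t)^t\bigl(P(t),Q(t)\bigr)^t=\nabla u^r(x,y)$ with $Z$ the linearized flow; differentiating in $t$, using $\dot Z=AZ$ and the invertibility of $Z(t)$, yields exactly the adjoint system (Lemma \ref{AdjointLemma}). If you keep your definition of $P,Q$ through $u$, you would first have to prove $\nabla u(\mathbf x(t))$ exists and equals $\nabla u^r(\mathbf x(t))$; working directly with $u^r$ makes that question moot.
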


\par  As hinted at above, the paper by Bolzoni, Bonacini, Soresina, and Groppi \cite{MR3688684} was a major inspiration of this work.   However, we would also like to emphasize that we gained perspective and learned techniques for time-optimal control by studying the notes of Evans \cite{EvansNotes} and the monographs by Bardi and Capuzzo-Dolcetta \cite{MR1484411}, Fleming and Soner \cite{MR2179357}, Fleming and Rishel \cite{MR0454768}, and Cesari \cite{MR688142}.  We would also like to point out that there have been several recent papers  \cite{MR3328003,MR3688684,MR3985528,MR2771181, MR3629459,MR1822058,HJBpaper} which consider control problems involving compartmental models.  We hope that our work adds in a positive way to this trend. 

\par This paper is organized as follows. In section \ref{ExistenceSection}, we will study the controlled SIR system \eqref{SIRcontrol} and verify the existence of an optimal vaccination rate for 
any given initial conditions.  Then in sections \ref{HJBsect} and \ref{UniqueSect}, we will show $u$ is the unique viscosity solution of the HJB equation \eqref{HJB} which satisfies conditions \eqref{BCsegment}, \eqref{BCsegment2}, and 
\eqref{BCatInfinity}.
 Next, we'll   study the differentiability of $u$ and prove Theorem \ref{thm1} in section \ref{SemiConcaveSect}. In section \ref{OptSwtichSect}, we derive formula \eqref{SecondFormulaU} and consider the PDE \eqref{HJB2}. Finally in section \ref{PMPsect}, we will prove Theorem \ref{thm3}.

\section{Existence of an optimal control}\label{ExistenceSection}
In this preliminary section, we will explain that there always is a solution of the controlled SIR system \eqref{SIRcontrol} and derive several properties of solutions. In particular, we will show solutions depend continuously on their initial conditions and on the control.  We will use this continuity to show that an optimal vaccination rate exists for our eradication time problem.

\begin{lem}\label{ExistenceLemmaSIRcontrol}
For any $x,y\ge 0$ and $r\in {\cal A}$, there is a unique solution
$$
S,I: [0,\infty)\rightarrow \R
$$
of the controlled SIR equations \eqref{SIRcontrol} with $S(0)=x$ and $I(0)=y$.  Moreover, $S, I$, and $\dot I$ are Lipschitz continuous. 
\end{lem}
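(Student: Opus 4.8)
The plan is to construct the solution using the Carath\'eodory theory of ordinary differential equations (necessary here because $r$ is only bounded, not continuous), to establish a priori bounds that force the solution to be global, and then to read off the Lipschitz estimates from those bounds.

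First I would fix $r\in\mathcal A$ and view the right-hand side of \eqref{SIRcontrol} as the map $F(t,S,I)=\big(-\beta SI-r(t)S,\,\beta SI-\gamma I\big)$. This $F$ is measurable in $t$ for each $(S,I)$, polynomial (hence continuous) in $(S,I)$ for each $t$, and, because $0\le r(t)\le 1$, it is Lipschitz in $(S,I)$ on bounded sets with a Lipschitz constant independent of $t\ge 0$. The Carath\'eodory existence theorem then gives, for each $x,y\ge 0$, an absolutely continuous pair $S,I$ defined on a maximal interval $[0,T)$ and solving \eqref{SIRcontrol} for a.e.\ $t$; the uniform local Lipschitz bound together with Gr\"onwall's inequality yields uniqueness on $[0,T)$.

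Next I would bound the solution. Treating the first equation as the scalar linear ODE $\dot S=-(\beta I(t)+r(t))\,S$ with $S(0)=x$ — whose coefficient is locally integrable on $[0,T)$ since $I$ is continuous and $r\in L^\infty$ — we obtain $S(t)=x\exp\!\big(-\int_0^t(\beta I+r)\big)$, so $0\le S(t)$; likewise $I(t)=y\exp\!\big(\int_0^t(\beta S-\gamma)\big)\ge 0$. Consequently $\dot S=-(\beta I+r)S\le 0$, which gives $0\le S(t)\le x$, and adding the two equations gives $\dot S+\dot I=-rS-\gamma I\le 0$, so $S+I$ is non-increasing and $0\le I(t)\le x+y$ on $[0,T)$. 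Since the solution stays in the compact set $[0,x]\times[0,x+y]$, the standard continuation (blow-up) alternative forces $T=\infty$, proving existence and uniqueness on $[0,\infty)$.

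Finally, the Lipschitz assertions follow from these bounds. For a.e.\ $t$ we have $|\dot S|=(\beta I+r)S\le(\beta(x+y)+1)x$ and $|\dot I|=|\beta S-\gamma|\,I\le(\beta x+\gamma)(x+y)$, so $S$ and $I$ are globally Lipschitz; and then $\dot I=(\beta S-\gamma)I$ is a product of bounded Lipschitz functions, hence itself Lipschitz. I do not expect a genuine obstacle here: the only point requiring care is that, with $r$ merely in $L^\infty$, one works with Carath\'eodory solutions, so \eqref{SIRcontrol} holds only almost everywhere and $\dot S$ is defined only a.e., whereas $\dot I=\beta SI-\gamma I$ always admits a continuous (indeed Lipschitz) representative — which is what makes the assertion about $\dot I$ in the statement meaningful.
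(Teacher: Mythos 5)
Your proposal is correct and follows essentially the same route as the paper: Carath\'eodory existence for the measurable-in-$t$ right-hand side, the exponential formulas to get nonnegativity, boundedness of $S+I$ to continue the solution globally, and the resulting a priori bounds on $\dot S,\dot I$ for the Lipschitz claims. The only cosmetic difference is at the last step, where you note that $\dot I=(\beta S-\gamma)I$ is a product of bounded Lipschitz functions, while the paper differentiates the $I$-equation and bounds $\ddot I$ directly; these are interchangeable.
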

\begin{proof}
By Caratheodory's Theorem (Theorem 5.1 in section I.5 of \cite{MR587488}),
there is an absolutely continuous local solution $S, I: [0,T)\rightarrow \R$. We also set 
$$
R(t):=\displaystyle\gamma \int^t_0I(\tau)d\tau, \quad t\in [0,T),
$$
so 
$$
S(t)+I(t)+R(t)= S(0)+I(0)+R(0)=x+y
$$
for $t\in [0,T)$. In view of \eqref{SIRcontrol}, 
\be
S(t)= xe^{\displaystyle- \int^t_0\beta I(\tau)+r(\tau)d\tau}\;\;\text{and}\;\; I(t)=ye^{\displaystyle \int^t_0(\beta S(\tau)-\gamma)d\tau}.
\ee
Thus, $S(t),I(t)\ge 0$ for $t\in [0,T)$. It follows that 
$$
0\le S(t), I(t)\le x+y, \quad t\in [0,T).
$$
It is then possible to continue this solution to all of $[0,\infty)$ (Theorem 5.2 in section I.5 of \cite{MR587488}).  
Given that $S(t), I(t)$ are bounded, it is also not hard to check that this solution is unique. 

\par Note that 
$$
0\ge \dot S(t)\ge -\beta (x+y)^2-(x+y)
$$
for almost every $t\ge 0$.  Thus, $S$ is Lipschitz continuous. We also note 
$$
|\dot I(t)|\le \beta (x+y)^2+\gamma(x+y)
$$
for all $t\ge 0$, so $I$ is Lipschitz continuous. Differentiating the second equation in \eqref{SIRcontrol} we see that 
$$
\ddot I(t)=-\beta (\beta I(t)+r(t))S(t)I(t)+(\beta S(t)-\gamma)^2I(t)
$$
for almost every $t\ge 0$. Thus, 
\be
|\ddot I(t)|\le \beta (\beta (x+y)+1)(x+y)^2+2(\beta^2(x+y)^2+\gamma^2)(x+y)
\ee
for almost every $t\ge 0$. It follows that $\dot I$ is also Lipschitz continuous.  
\end{proof}
\begin{lem}\label{LimitLemma}
Suppose $S,I$ is a solution of \eqref{SIRcontrol} with $S(0)\ge 0$ and $I(0)>0$ for some $r\in {\cal A}$. Then the limit
\be
\lim_{t\rightarrow\infty}S(t)\in\left[0,\frac{\gamma}{\beta}\right)
\ee
exists and
$$
\lim_{t\rightarrow\infty}I(t)=0.
$$ 
\end{lem}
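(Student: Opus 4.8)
The plan is to exploit the structure of the two equations in \eqref{SIRcontrol} separately. First I would observe that $\dot S = -(\beta I + r)S \le 0$, so $S$ is nonincreasing; since $S(t)\ge 0$ for all $t$ (as shown in Lemma \ref{ExistenceLemmaSIRcontrol}), the limit $L:=\lim_{t\to\infty}S(t)$ exists and lies in $[0,S(0)]$. The content of the lemma is therefore twofold: that $I(t)\to 0$, and that $L<\gamma/\beta$ strictly. I would handle $I\to 0$ first, then deduce the bound on $L$ from it.

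For $I\to 0$: I would use the exact formula $I(t)=I(0)\exp\!\big(\int_0^t(\beta S(\tau)-\gamma)\,d\tau\big)$ from the previous lemma, so it suffices to show $\int_0^\infty(\gamma-\beta S(\tau))\,d\tau=+\infty$. There are two cases. If $L<\gamma/\beta$, then $\gamma-\beta S(\tau)$ is eventually bounded below by a positive constant and the integral diverges, giving $I(t)\to 0$. If instead $L\ge \gamma/\beta$, then $S(\tau)\ge \gamma/\beta$ for all $\tau$, so $\dot S=-(\beta I+r)S\le -\beta I S\le -\gamma I$, i.e. $\gamma\int_0^\infty I(\tau)\,d\tau\le S(0)-L<\infty$; combined with the Lipschitz bound on $\dot I$ from Lemma \ref{ExistenceLemmaSIRcontrol}, an integrable Lipschitz function must tend to $0$, so again $I(t)\to 0$. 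Thus in all cases $\lim_{t\to\infty}I(t)=0$.

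It remains to rule out $L\ge \gamma/\beta$. Suppose $L\ge\gamma/\beta$. Since $S$ is nonincreasing, $S(t)\ge\gamma/\beta$ for all $t$, hence $\dot I=(\beta S-\gamma)I\ge 0$, so $I$ is nondecreasing; combined with $I(0)>0$ this forces $I(t)\ge I(0)>0$ for all $t$, contradicting $I(t)\to0$. Therefore $L<\gamma/\beta$, completing the proof.

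The main obstacle is the case $L=\gamma/\beta$ (or more generally $L\ge\gamma/\beta$), where the naive divergence argument for $\int(\gamma-\beta S)$ fails; the fix is the dichotomy above — either the integral of $I$ converges (forcing $I\to0$ via the uniform Lipschitz bound on $\dot I$) or, using monotonicity of $I$, one gets an outright contradiction with $I(0)>0$. I expect the write-up to be short, with the only care needed being the bookkeeping between "$I$ integrable $+$ $\dot I$ bounded $\Rightarrow$ $I\to 0$" and the monotonicity argument; everything else is immediate from the explicit solution formulas already established.
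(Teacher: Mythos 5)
Your proof is correct, but it follows a genuinely different route from the paper's. The paper first gets $\gamma\int_0^\infty I(\tau)\,d\tau\le S(0)+I(0)$ from the conservation law $S+I+\gamma\int_0^\cdot I=\mathrm{const}$, extracts a sequence $t_k\to\infty$ with $I(t_k)\to 0$, and passes to the limit in the integrated equation $I(t)=I(0)+\beta\int_0^t SI-\gamma\int_0^t I$ to conclude $\lim_{t\to\infty}I(t)=0$; the strict bound $\beta\lim_{t\to\infty}S(t)<\gamma$ is then obtained by a case analysis on whether $\beta S(0)\le\gamma$, using in the other case that $I$ must have a critical point $t_0$ with $\beta S(t_0)=\gamma$ and that $S$ keeps decreasing afterwards. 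You instead work from the exponential formula $I(t)=I(0)\exp\bigl(\int_0^t(\beta S-\gamma)\bigr)$ and a dichotomy on $L=\lim S$: for $L<\gamma/\beta$ the exponent diverges to $-\infty$, while for $L\ge\gamma/\beta$ you derive integrability of $I$ from $\dot S\le-\gamma I$, invoke a Barbalat-type fact, and then kill this case outright by the monotonicity of $I$, which simultaneously yields the strict inequality. Your version avoids the subsequence/limit-passing step and the critical-point argument; the paper's avoids Barbalat and records the globally useful bound $\gamma\int_0^\infty I\le S(0)+I(0)$. Two small remarks: what you need is that $I$ itself is Lipschitz (i.e.\ $\dot I$ bounded, which Lemma \ref{ExistenceLemmaSIRcontrol} gives), not a Lipschitz bound on $\dot I$; and in the case $L\ge\gamma/\beta$ you could skip Barbalat entirely, since $I$ nondecreasing with $I(0)>0$ already contradicts $\int_0^\infty I<\infty$.
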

\begin{proof}
From the proof of the previous lemma, we have 
$$
\gamma\int^\infty_0 I(\tau)d\tau\le S(0)+I(0).
$$
It follows that there is a sequence of times $t_k\nearrow\infty$ such that $\lim_{k\rightarrow\infty}I(t_k)=0$.  Also note that 
\be\label{IformulaGoesZero}
I(t)=I(0)+\beta \int^t_0 S(\tau)I(\tau)d\tau -\gamma\int^t_0 I(\tau)d\tau
\ee
for all $t\ge 0$. Choosing $t=t_k\rightarrow\infty$ and sending $k\rightarrow\infty$ gives  
$$
0=I(0)+\beta \int^\infty_0 S(\tau)I(\tau)d\tau -\gamma\int^\infty_0 I(\tau)d\tau.
$$
Now we can send $t\rightarrow\infty$ in \eqref{IformulaGoesZero} to get 
$$
\lim_{t\rightarrow\infty}I(t)=I(0)+\beta \int^\infty_0 S(\tau)I(\tau)d\tau -\gamma\int^\infty_0 I(\tau)d\tau=0.
$$ 

\par Suppose $S(0)>0$ or else $S(t)=0$ for all $t\ge 0$. For $S(0)>0$, $S$ is decreasing and positive, thus $\lim_{t\rightarrow\infty}S(t)$ exists.  It follows that if $\beta S(0)\le \gamma$, then 
\be\label{Slimitlessthan}
\beta\lim_{t\rightarrow\infty}S(t)<\gamma.
\ee
Otherwise, $I$ is initially increasing and must have a critical point at a time $t_0>0$ with $\beta S(t_0)=\gamma.$
As $S$ is decreasing, \eqref{Slimitlessthan} holds in this case, as well.
\end{proof}

\par We emphasize that since $S$ is decreasing, $I$ can have at most one critical point.  We'll also record one more fact which essentially follows from the 
proof above. 
\begin{cor}\label{Sulessthan}
Let $r\in {\cal A}$ and suppose $S, I$ is the corresponding solution of \eqref{SIRcontrol}
which satisfies $S(0)=x> 0$ and $I(0)=y>\mu$. Then 
$$
\beta S(u)<\gamma
$$
where $u=\inf\{t> 0: I(t)= \mu\}.$
\end{cor}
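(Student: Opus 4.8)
The plan is to argue directly from the dynamics together with Lemma \ref{LimitLemma}, in two steps: first establish $\beta S(u)\le\gamma$ by a one-sided derivative argument at the eradication time, and then upgrade this to a strict inequality by ruling out the borderline case $\beta S(u)=\gamma$.

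First I would check that $u$ is well-defined with $0<u<\infty$: since $I(0)=y>\mu$ and $I$ is continuous with $\lim_{t\to\infty}I(t)=0$ by Lemma \ref{LimitLemma}, the intermediate value theorem yields a time at which $I=\mu$, and $I(0)\neq\mu$ forces the first such time to be positive. Next I would record two monotonicity facts. Since $S(0)=x>0$, the formula $S(t)=x\exp\bigl(-\int_0^t(\beta I+r)\,d\tau\bigr)$ from the proof of Lemma \ref{ExistenceLemmaSIRcontrol} gives $S(t)>0$ for all $t$, and similarly $I(t)>0$ for all $t$ since $y>\mu>0$; hence $\dot S=-(\beta I+r)S<0$ almost everywhere, so $t\mapsto\beta S(t)$ is strictly decreasing. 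Combined with $\beta\lim_{t\to\infty}S(t)<\gamma$ (again Lemma \ref{LimitLemma}), this shows the equation $\beta S(t)=\gamma$ has at most one solution $t\ge 0$.

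For the second step, minimality of $u$ gives $I(t)>\mu$ for $t\in[0,u)$ while $I(u)=\mu$, so the difference quotient $(I(u)-I(t))/(u-t)\le 0$ for $t<u$; letting $t\uparrow u$ and using that $\dot I$ exists (indeed is Lipschitz, by Lemma \ref{ExistenceLemmaSIRcontrol}) gives $\dot I(u)\le 0$, i.e. $(\beta S(u)-\gamma)\mu\le 0$, hence $\beta S(u)\le\gamma$. To finish, suppose for contradiction that $\beta S(u)=\gamma$. By the monotonicity recorded above, $u$ is then the unique zero of $\beta S-\gamma$, so $\beta S(t)>\gamma$ for every $t\in[0,u)$; since $I(t)>0$ there, $\dot I(t)=(\beta S(t)-\gamma)I(t)>0$ on $[0,u)$, so $I$ is strictly increasing on $[0,u]$. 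But then $\mu=I(u)>I(0)=y>\mu$, a contradiction, so $\beta S(u)<\gamma$.

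The argument is short, and the only point requiring a little care is the passage from ``$I$ stays above $\mu$ before $u$'' to ``$\dot I(u)\le 0$'', which is where differentiability of $I$ at $u$ is used; everything else is an immediate consequence of the strict monotonicity of $S$ and the asymptotics already established in Lemma \ref{LimitLemma}.
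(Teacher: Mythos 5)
Your argument is correct, but it is organized differently from the paper's proof, so a brief comparison is worth making. The paper argues forward from the initial data: it splits into the cases $\beta x\le\gamma$ and $\beta x>\gamma$; in the first case the strict decrease of $S$ immediately gives $\beta S(u)<\beta x\le\gamma$, and in the second case it uses the single-peak structure of $I$ --- the maximum of $I$ occurs at the unique time $t_0$ with $\beta S(t_0)=\gamma$, and since $I(t_0)>\mu$ that peak lies strictly before $u$, whence $\beta S(u)<\beta S(t_0)=\gamma$. You instead argue backward from the terminal time: the first-crossing property forces $\dot I(u)\le 0$, i.e. $(\beta S(u)-\gamma)\mu\le 0$, and the borderline case $\beta S(u)=\gamma$ is excluded because strict monotonicity of $S$ would then give $\beta S(t)>\gamma$, hence $\dot I(t)>0$, on all of $[0,u)$, contradicting $I(u)=\mu<y=I(0)$. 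Both proofs rest on the same two facts ($S$ is strictly decreasing because $S,I>0$, and the sign of $\dot I=(\beta S-\gamma)I$ matches that of $\beta S-\gamma$), but yours avoids the case split on the initial condition at the cost of a small contradiction argument, and it makes explicit two points the paper leaves implicit: that $u$ is finite and positive (via Lemma \ref{LimitLemma}) and that $I$ is genuinely differentiable at $u$ (via Lemma \ref{ExistenceLemmaSIRcontrol}, or simply because the second equation of \eqref{SIRcontrol} has a continuous right-hand side, so $I$ is $C^1$). Your side remark that $\beta S(t)=\gamma$ has at most one solution is not actually needed for the contradiction step; strict monotonicity of $S$ alone suffices.
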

\begin{proof}
As $S$ is decreasing and $u>0$, $\beta x\le \gamma$ implies 
$$
\beta S(u)-\gamma<\beta x-\gamma\le 0.
$$
If $\beta x>\gamma$, $I$ will initially increase. Let $t>0$ be the maximum time for $I$. At this time  $\beta S(t)=\gamma$ and $I(t)> \mu.$
It follows that $t<u$ and 
$$
\beta S(u)-\gamma<\beta S(t)-\gamma=0.
$$
\end{proof}

\par We recall that a sequence $(r^k)_{k\in \N}\subset L^\infty([0,\infty))$ converges weak* to $r$ if 
$$
\lim_{k\rightarrow\infty}\int^\infty_0 g(t)r^k(t)dt=\int^\infty_0 g(t)r(t)dt
$$
for each $g\in L^1([0,\infty))$. Moreover, any sequence $(r^k)_{k\in \N}\subset L^\infty([0,\infty))$ with 
$$
\sup_{k\in \N}\|r^k\|_{L^\infty([0,\infty))}<\infty
$$
has a subsequence which converges weak*.  In particular, the control set ${\cal A}$ is weak* compact. We can use this notion to show solutions of \eqref{SIRcontrol} depend continuously on $r$ and their initial conditions. 

\begin{prop}\label{CompactnessProp}
Suppose $x^k$, $y^k\ge 0$, $r^k\in {\cal A}$ for each $k\in \N$, and 
\be
\begin{cases}
x^k\rightarrow x^\infty\\
y^k\rightarrow y^\infty\\
r^k\rightarrow r^\infty\;\text{weak*}
\end{cases}
\ee
as $k\rightarrow\infty$. If $S^k,I^k$ is the solution of \eqref{SIRcontrol} with $r=r^k$, $S^k(0)=x^k$, and  $I^k(0)=y^k$,  
then 
$$
\begin{cases}
S^k(t)\rightarrow S^\infty(t)\\
I^k(t)\rightarrow I^\infty(t)\\
\end{cases}
$$
uniformly for $t$ belonging to bounded subintervals of $[0,\infty)$.  Here $S^\infty,I^\infty$ is the solution of \eqref{SIRcontrol} with $r=r^\infty$, $S^\infty(0)=x^\infty$, and  $I^\infty(0)=y^\infty$.
\end{prop}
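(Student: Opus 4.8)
The plan is to prove uniform convergence on bounded intervals by combining a priori bounds with a Gronwall-type estimate and the Arzelà–Ascoli theorem, exploiting that the dependence on $r$ enters only linearly and multiplicatively in a single factor. First I would recall from the proof of Lemma \ref{ExistenceLemmaSIRcontrol} that each pair $S^k,I^k$ is valued in $[0,x^k+y^k]$, and since $x^k\to x^\infty$ and $y^k\to y^\infty$ these bounds are uniform in $k$ on any fixed horizon: there is $M$ with $0\le S^k(t),I^k(t)\le M$ for all $k$ and all $t\ge0$. The same lemma gives $\dot S^k,\dot I^k$ bounded uniformly in $k$ (the Lipschitz bounds there depend only on $x+y$ and on the uniform bound $r\le1$), so the families $(S^k)$, $(I^k)$ are equicontinuous and uniformly bounded on $[0,T]$ for every $T$.

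Next I would pass to a subsequence along which $S^k\to \widetilde S$ and $I^k\to \widetilde I$ uniformly on $[0,T]$, by Arzelà–Ascoli and a diagonal argument over $T=1,2,\dots$. The point is then to identify $\widetilde S,\widetilde I$ as the solution driven by $r^\infty$ with initial data $x^\infty,y^\infty$. Writing the integral forms
\be
S^k(t)=x^k-\int_0^t\bigl(\beta S^k(\tau)I^k(\tau)+r^k(\tau)S^k(\tau)\bigr)d\tau,\qquad
I^k(t)=y^k+\int_0^t\bigl(\beta S^k(\tau)-\gamma\bigr)I^k(\tau)\,d\tau,
\ee
I would pass to the limit term by term. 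For the quadratic terms $\beta S^k I^k$ and the term $(\beta S^k-\gamma)I^k$, uniform convergence of $S^k,I^k$ suffices. The only delicate term is $\int_0^t r^k(\tau)S^k(\tau)\,d\tau$: here I would split it as $\int_0^t r^k(\tau)\bigl(S^k(\tau)-\widetilde S(\tau)\bigr)d\tau+\int_0^t r^k(\tau)\widetilde S(\tau)\,d\tau$; the first piece is bounded by $\|S^k-\widetilde S\|_{L^\infty[0,T]}\cdot T\to0$ using $0\le r^k\le1$, and the second converges to $\int_0^t r^\infty(\tau)\widetilde S(\tau)\,d\tau$ because $\tau\mapsto\widetilde S(\tau)\mathbf 1_{[0,t]}(\tau)\in L^1$ and $r^k\rightharpoonup r^\infty$ weak*. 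Hence $\widetilde S,\widetilde I$ satisfy the integral equations for $r^\infty$ with data $x^\infty,y^\infty$, so by the uniqueness in Lemma \ref{ExistenceLemmaSIRcontrol} they equal $S^\infty,I^\infty$.

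Finally I would upgrade subsequential convergence to full convergence by the standard argument: every subsequence of $(S^k,I^k)$ has a further subsequence converging (uniformly on bounded intervals) to the same limit $(S^\infty,I^\infty)$, so the whole sequence converges. The main obstacle is exactly the handling of the control term $\int_0^t r^k S^k\,d\tau$, where one cannot use weak* convergence directly against $S^k$ (which varies with $k$) — the resolution is the add-and-subtract trick above, turning it into a strong-times-weak pairing plus a vanishing error controlled by the uniform bound $r^k\le1$ and the uniform convergence of $S^k$. Everything else is routine compactness and bookkeeping of the uniform bounds from Lemma \ref{ExistenceLemmaSIRcontrol}.
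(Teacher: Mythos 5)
Your proposal is correct and follows essentially the same route as the paper: uniform bounds and equicontinuity from Lemma \ref{ExistenceLemmaSIRcontrol}, Arzel\`a--Ascoli, passage to the limit in the integral equations using the weak* convergence of $r^k$, and identification of the limit via uniqueness. You are in fact slightly more explicit than the paper on two points it leaves implicit --- the add-and-subtract treatment of $\int_0^t r^k S^k\,d\tau$ and the upgrade from subsequential to full-sequence convergence --- which is welcome but not a different method.
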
 
\begin{proof}
We showed in Lemma \ref{ExistenceLemmaSIRcontrol} that 
$$
0\le S^k(t), I^k(t)\le x^k+y^k, \quad t\ge 0
$$
and 
$$
\begin{cases}
|\dot S^k(t)|\le \beta(x^k+y^k)^2+(x^k+y^k)\\\\
|\dot I^k(t)|\le \beta(x^k+y^k)^2+\gamma(x^k+y^k)
\end{cases}
$$
for almost every $t\ge 0$.  As $x^k$ and $y^k$ are convergent, the sequences $(S^k)_{k\in\N}$ and 
$(I^k)_{k\in\N}$ of continuous functions are both uniformly bounded and uniformly equicontinuous. The Arzel\`a-Ascoli 
Theorem implies that there are locally uniformly convergent sequences $(S^{k_j})_{j\in\N}$ and $(I^{k_j})_{j\in\N}$.  
Let us write $S, I: [0,\infty)\rightarrow\infty$ for their respective limits.   

\par Clearly $S(0)=x$ and $I(0)=y$. By Lemma \ref{ExistenceLemmaSIRcontrol}, it suffices to show $S,I$ satisfy  \eqref{SIRcontrol} with $r=r^\infty$.
To this end, we note that $S^k$ and $I^k$ satisfy 
$$
S^k(t)=x^k-\beta \int^t_0 S^k(\tau)I^k(\tau)d\tau -\int^t_0 r^k(\tau)S^k(\tau)d\tau
$$
and 
$$
I^k(t)=y^k+\beta \int^t_0 S^k(\tau)I^k(\tau)d\tau -\gamma\int^t_0 I^k(\tau)d\tau
$$
for each $t\ge 0$. Employing the weak* convergence of $r^{k}$ and the the local uniform convergence of $(S^{k_j})_{j\in\N}$ and $(I^{k_j})_{j\in\N}$, 
we can let $k=k_j\rightarrow\infty$ in the two identities above to conclude 
$$
S(t)=x-\beta \int^t_0 S(\tau)I(\tau)d\tau -\int^t_0 r^\infty(\tau)S(\tau)d\tau
$$
and 
$$
I(t)=y+\beta \int^t_0 S(\tau)I(\tau)d\tau -\gamma\int^t_0 I(\tau)d\tau
$$
for each $t\ge 0$. That is, $S=S^\infty$ and $I=I^\infty$. 
\end{proof}

\par Let us fix a threshold
$$
\mu>0
$$
and a pair of initial conditions
$$
x\ge 0\;\text{and}\; y\ge\mu.
$$
For a given $r\in {\cal A}$,  we will denote $S^r, I^r$ denote the solution of the  \eqref{SIRcontrol}
which satisfies $S^r(0)=x$ and $I^r(0)=y$. We define 
$$
u^r:=\inf\{t> 0: I^{r}(t)= \mu\}
$$
and now argue that a minimizing vaccination rate $r\in {\cal A}$ exists.

\begin{thm}\label{ExistenceTheorem}
There is $r^*\in {\cal A}$ such that 
\be\label{rstarIneq}
u^{r^*}\le u^r
\ee
for all $r\in {\cal A}$.
\end{thm}
\begin{proof}
Choose a minimizing sequence $(r^k)_{k\in \N}\subset {\cal A}$
$$
\inf_{r\in {\cal A}} u^r=\lim_{k\rightarrow\infty}u^{r^k}.
$$
Without any loss of generality we may assume that $r^k\rightarrow r^\infty$ weak* to some $r^\infty$ as this occurs 
for a subsequence. Let $S^k, I^k$ denote the solution of  \eqref{SIRcontrol} with $r=r^k$, $S^k(0)=x$, and $I^k(0)=y$. 
By Proposition \ref{CompactnessProp}, $S^k$ and $I^k$ converge locally uniformly to $S^\infty$ and $I^\infty$, respectively, the 
 solution of  \eqref{SIRcontrol} with $r=r^\infty$, $S^\infty(0)=x$, and $I^\infty(0)=y$.   
 
 \par Therefore, we can send $k\rightarrow\infty$ in $I^k(u^{r^k})=\mu$ to get
 $$
 I^\infty\left(\inf_{r\in {\cal A}} u^r \right)=\mu.
 $$
 That is, 
 $$
u^{r^\infty}\le  \inf_{r\in {\cal A}}u^r.
 $$
\end{proof}

\par We'll call any $r^*\in {\cal A}$ satisfying \eqref{rstarIneq} an {\it optimal vaccination rate} (for the SIR eradication time problem) with
initial conditions $S(0)=x\ge0$ and $I(0)=y\ge\mu$.  In the sections that follow, we will develop methods to characterize such rates.

\section{The HJB equation}\label{HJBsect}
We will now consider our time optimal control problem for varying initial conditions.  To this end, we will employ the value function
\be\label{ValueFunction}
u(x,y)=\min_{r\in {\cal A}} u^r(x,y)
\ee
discussed in the introduction. Here $u^r(x,y)=\inf\{t> 0: I^r(t)=\mu\}$ is the eradication time corresponding to a given vaccination rate $r\in {\cal A}$, and $S^r$ and $I^r$ satisfy \eqref{SIRcontrol} with $S^r(0)=x\ge 0$ and $I^r(0)=y\ge \mu$.  In this section, we will show that $u$ is a continuous viscosity solution of the HJB equation \eqref{HJB}.

\par Our first task will be to establish that $u$ is continuous on $[0,\infty)\times[\mu,\infty)$. To this end, we'll start by showing that $u^r$ is locally bounded uniformly in $r\in {\cal A}$.
\begin{lem} 
Let $r\in {\cal A}$. Then
\be\label{uarrUpperBound}
0\le u^r(x,y)\le \frac{x+y}{\mu\gamma}
\ee
for $x\ge 0$ and $y\ge \mu$. 
\end{lem}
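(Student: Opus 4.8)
The plan is to combine three ingredients, all essentially already available: the finiteness of $u^r$ coming from Lemma \ref{LimitLemma}, the integrability bound $\gamma\int_0^\infty I^r(\tau)\,d\tau\le x+y$ obtained in the proof of Lemma \ref{ExistenceLemmaSIRcontrol}, and the elementary observation that $I^r$ stays at or above $\mu$ up to the first hitting time $u^r$.

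First I would record that $u^r$ is well defined and finite, and that the lower bound $u^r\ge 0$ is immediate since $u^r$ is a minimum over $t\ge 0$: as $I^r(0)=y\ge\mu>0$ and $\lim_{t\to\infty}I^r(t)=0$ by Lemma \ref{LimitLemma}, the intermediate value theorem shows $\{t\ge 0: I^r(t)=\mu\}$ is nonempty, and it is closed by continuity of $I^r$, so it has a minimum. Next comes the key point: $I^r(t)\ge\mu$ for every $t\in[0,u^r]$. If $y=\mu$ this is trivial since then $u^r=0$; if $y>\mu$, then for $t\in[0,u^r)$ we cannot have $I^r(t)\le\mu$, for otherwise $I^r$, starting at $y>\mu$, would attain the value $\mu$ somewhere in $[0,t]\subset[0,u^r)$ by the intermediate value theorem, contradicting the minimality of $u^r$. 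Hence $I^r\ge\mu$ on $[0,u^r)$, and then on $[0,u^r]$ by continuity.

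Finally I would integrate. From $S^r+I^r+R^r\equiv x+y$ with $R^r(t)=\gamma\int_0^t I^r(\tau)\,d\tau$ and all three terms nonnegative, we get $\gamma\int_0^\infty I^r(\tau)\,d\tau\le x+y$, and therefore
\[
\mu\,u^r=\int_0^{u^r}\mu\,d\tau\le\int_0^{u^r}I^r(\tau)\,d\tau\le\int_0^\infty I^r(\tau)\,d\tau\le\frac{x+y}{\gamma},
\]
which is \eqref{uarrUpperBound}. There is no genuine obstacle here; the only step needing a word of justification is that $I^r$ does not dip below $\mu$ before time $u^r$, and this is simply the definition of $u^r$ as a first hitting time together with continuity of $I^r$.
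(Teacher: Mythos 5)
Your proof is correct, and it reaches \eqref{uarrUpperBound} by a route that is packaged differently from the paper's. The paper argues with the verification function $v(x,y)=\frac{x+y}{\mu\gamma}$: it computes $\frac{d}{dt}v(S^r(t),I^r(t))=-\frac{1}{\mu\gamma}r(t)S^r(t)-\frac{1}{\mu}I^r(t)\le -1$ on $[0,u^r(x,y)]$ (the last step using exactly your observation that $I^r\ge\mu$ up to the hitting time), integrates, and uses $v\ge 0$; this supersolution-along-trajectories computation previews the HJB comparison technique that reappears in the lower-bound lemma with $v=\frac{\ln(x+y)-\ln(\gamma/\beta+\mu)}{\max\{\gamma,1\}}$. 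You instead combine the mass bound $\gamma\int_0^\infty I^r(\tau)\,d\tau\le x+y$ from the existence lemma with $I^r\ge\mu$ on $[0,u^r]$, giving $\mu\gamma\,u^r\le\gamma\int_0^{u^r}I^r\le x+y$. The two arguments rest on the same inequality; yours is more elementary and makes transparent where the constant $\frac{1}{\mu\gamma}$ comes from, while the paper's phrasing fits its dynamic-programming framework and adapts directly to other verification functions. One small point of precision: for the controlled system \eqref{SIRcontrol} the identity $S^r+I^r+R^r\equiv x+y$ is not exact, since $\frac{d}{dt}(S^r+I^r+R^r)=-r(t)S^r(t)\le 0$; what holds is $S^r(t)+I^r(t)+\gamma\int_0^t I^r(\tau)\,d\tau\le x+y$, which is all you use (the paper's own proof of Lemma \ref{ExistenceLemmaSIRcontrol} states the equality too, and Lemma \ref{LimitLemma} correctly cites only the inequality), so this does not affect your conclusion.
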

\begin{proof}
Set
$$
w(x,y)=\frac{x+y}{\mu\gamma}
$$
and note $\beta xy \partial_xw +(\gamma-\beta x)y\partial_yw=y/\mu.$  As a result, 
\begin{align}
&\frac{d}{dt}w(S^r(t),I^r(t))\\
&=
\partial_xw(S^{r}(s),I^{r}(s))(-\beta S^{r}(t) I^{r}(t)-r(t)S^{r}(t))+\partial_yw(S^{r}(t),I^{r}(t))(\beta S^{r}(t)I^{r}(t) -\gamma I^{r}(t))\\
&=-\frac{1}{\mu\gamma}r(t)S^{r}(t)-\frac{1}{\mu}I^r(t)\\
&\le -\frac{1}{\mu}I^r(t)\\
&\le -1
\end{align}
for $t\in [0,u^r(x,y)]$.  Integrating from $0$ to $t=u^r(x,y)$ gives 
$$
w(S^r(u^r(x,y)),I^r(u^r(x,y)))-w(x,y) \le -u^r(x,y).
$$
And as $w$ is nonnegative, $w(x,y)\ge u^r(x,y)$. 
\end{proof}
\begin{cor}\label{ConvUany}
Suppose $x^k\ge 0$, $y^k\ge \mu$ and $r^k\in {\cal A}$ for each $k\in \N$, and 
\be
\begin{cases}
x^k\rightarrow x\\
y^k\rightarrow y\\
r^k\rightarrow r\;\text{weak*}
\end{cases}
\ee
as $k\rightarrow\infty$. Then
\be\label{Convergenceurkayxkayykay}
\lim_{k\rightarrow\infty}u^{r^k}(x^k,y^k)=u^{r}(x,y).
\ee
\end{cor}

\begin{proof}
Suppose $S^{r^k}, I^{r^k}$ is the solution of \eqref{SIRcontrol} with $S^{r^k}(0)=x^k$ and $I^{r^k}(0)=y^k.$   By Proposition \ref{CompactnessProp}, $S^{r^{k}}, I^{r^{k}}$ converge locally uniformly to $S^{r}, I^{r}$ as $k\rightarrow\infty$.  In view of the previous lemma, $u^{r^{k}}(x^{r^{k}},y^{r^{k}})$ is a bounded sequence; so there is a 
convergent subsequence $u^{r^{k_j}}(x^{r^{k_j}},y^{r^{k_j}})$ for which 
$$
t:=\liminf_{k\rightarrow\infty}u^{r^{k}}(x^{r^{k}},y^{r^{k}})=\lim_{j\rightarrow\infty}u^{r^{k_j}}(x^{r^{k_j}},y^{r^{k_j}}).
$$
As $I^{r^{k}}(u^{r^{k}}(x^{r^{k}},y^{r^{k}}))=\mu$ for each $k\in \N$,
$$
I^{r}(t)=\lim_{j\rightarrow\infty}I^{r^{k_j}}(u^{r^{k_j}}(x^{r^{k_j}},y^{r^{k_j}}))=\mu.
$$
By the definition of $u^r(x,y)$,
$$
u^r(x,y)\le t=\liminf_{k\rightarrow\infty}u^{r^{k}}(x^{r^{k}},y^{r^{k}}).
$$

\par We can also select a convergent subsequence $u^{r^{k_\ell}}(x^{r^{k_\ell}},y^{r^{k_\ell}})$ such that
$$
s:=\limsup_{k\rightarrow\infty}u^{r^{k}}(x^{r^{k}},y^{r^{k}})=\lim_{\ell\rightarrow\infty}u^{r^{k_\ell}}(x^{r^{k_\ell}},y^{r^{k_\ell}}).
$$
As above, we find $I^{r}(s)=\mu$.  If $y>\mu$ or if $0\le x\le \gamma/\beta$ and $y=\mu$, then the only solution of $I^r(\tau)=\mu$ is $\tau=u^r(x,y)$.  In particular, $s=u^r(x,y)$.  Otherwise, if $x>\gamma/\beta$ and $y=\mu$,  $I^r(\tau)=0$ has two solutions $\tau=0$ and $\tau=u^r(x,y)$. Thus, $s\le u^r(x,y)$ with either possibility.  It follows that 
$$
s=\limsup_{k\rightarrow\infty}u^{r^{k}}(x^{r^{k}},y^{r^{k}})\le u^r(x,y).
$$
We conclude \eqref{Convergenceurkayxkayykay}.
\end{proof}
\par Note the limit \eqref{Convergenceurkayxkayykay} implies $u^r$ is continuous on $[0,\infty)\times [\mu,\infty)$ for each $r\in {\cal A}$. The value function $u$ also inherits this continuity.
\begin{prop}
The value function $u$ is continuous at $(x,y)\in [0,\infty)\times [\mu,\infty)$.
\end{prop}
\begin{proof}
Suppose $x^k\ge 0$, $y^k\ge \mu$, and $x^k\rightarrow x, y^k\rightarrow y$ as $k\rightarrow\infty$, and select $r^k\in {\cal A}$ for which 
\be\label{Firstukayidentity}
u(x^k,y^k)=u^{r^k}(x^k,y^k)
\ee
($k\in \N$). We may select an increasing sequence of positive integers $k=k_j\rightarrow\infty$ such that 
 $$
 \liminf_{k\rightarrow\infty}u(x^k,y^k)=\lim_{j\rightarrow\infty}u^{r^{k_j}}(x^{k_j},y^{k_j})
 $$
and for which $r^{k_j}$ converges weak$^*$ to some $r^*\in {\cal A}$.  Using \eqref{Convergenceurkayxkayykay} gives 
$$
\liminf_{k\rightarrow\infty}u(x^k,y^k)=\lim_{j\rightarrow\infty}u^{r^{k_j}}(x^{k_j},y^{k_j})=u^{r^*}(x,y)\ge u(x,y).
$$
\par By \eqref{Firstukayidentity}, we also have $u(x^k,y^k)\le u^{r}(x^k,y^k)$ for all $k\in\N$ and each $r\in {\cal A}$. Using  \eqref{Convergenceurkayxkayykay} again gives 
$$
\limsup_{k\rightarrow\infty}u(x^k,y^k)\le u^r(x,y).
$$
Since $r\in {\cal A}$ is arbitrary, $\limsup_{k\rightarrow\infty}u(x^k,y^k)\le u(x,y)$. That is, 
$$
\limsup_{k\rightarrow\infty}u(x^k,y^k)= u(x,y)= \liminf_{k\rightarrow\infty}u(x^k,y^k).
$$
It follows that $u$ is continuous at $(x,y)$.
\end{proof}

\par Next, we will establish dynamic programming and then use this property to verify that $u$ is a viscosity solution of the 
HJB equation \eqref{HJB}.  We note that these types of results have been considered more generally elsewhere. An excellent reference for dynamic programming in time optimal control is Chapter IV of the monograph by Bardi and Capuzzo-Dolcetta \cite{MR1484411}.   In addition to \cite{MR1484411}, another standard reference for viscosity solutions is the monograph by Fleming and Soner  \cite{MR2179357}.

\begin{prop}\label{DPPprop}
Let $x\ge 0$ and $y\ge \mu$. If $t\in [0,u(x,y)]$,
\be\label{DPP}
u(x,y)=\min_{r\in {\cal A}}\{t+u(S^r(t),I^r(t))\}.
\ee
The minimum is attained by any $r\in {\cal A}$ such that $u(x,y)=u^{r}(x,y)$; and for any such $r$, 
\be
u(S^{r}(t),I^{r}(t))=u^{r}(S^{r}(t),I^{r}(t)).
\ee
\end{prop}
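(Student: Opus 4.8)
The plan is to prove the two inequalities in \eqref{DPP} separately: the estimate $u(x,y)\le t+u(S^r(t),I^r(t))$ for every $r\in{\cal A}$ by concatenating controls, and the reverse estimate by starting from an optimal control and invoking the single-control identity \eqref{babyDPP}. First I would dispose of the degenerate case $y=\mu$, in which every $u^r(x,y)=0$, hence $u(x,y)=0$, $t=0$, and all three quantities in the statement vanish. So I assume $y>\mu$ henceforth; then, since $u(x,y)\le u^r(x,y)$ for each $r$ and $I^r$ is continuous with $I^r(0)=y>\mu$, minimality in the definition of $u^r(x,y)$ forces $I^r(s)>\mu$ for every $s\in[0,u^r(x,y))$. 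In particular $I^r(t)\ge\mu$ whenever $t\le u(x,y)$, a fact I will use throughout.

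For the upper bound, fix an arbitrary $r\in{\cal A}$ and set $(x',y')=(S^r(t),I^r(t))$, so $y'\ge\mu$. Using the existence of an optimal vaccination rate, choose $\tilde r\in{\cal A}$ with $u^{\tilde r}(x',y')=u(x',y')$, and define the concatenated control $\hat r(s)=r(s)$ on $[0,t]$ and $\hat r(s)=\tilde r(s-t)$ on $(t,\infty)$; then $\hat r\in{\cal A}$. By uniqueness of solutions of \eqref{SIRcontrol} (Lemma \ref{ExistenceLemmaSIRcontrol}), $(S^{\hat r},I^{\hat r})$ coincides with $(S^r,I^r)$ on $[0,t]$ and, on $(t,\infty)$, equals the $(x',y')$-trajectory of $\tilde r$ shifted forward by $t$. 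Since $I^r>\mu$ on $[0,t)$, the first time $I^{\hat r}$ reaches $\mu$ is $t+u^{\tilde r}(x',y')$ (the borderline subcase $I^r(t)=\mu$ being handled by the same computation together with $u(x',\mu)=0$). Hence $u(x,y)\le u^{\hat r}(x,y)=t+u(x',y')$, and taking the infimum over $r\in{\cal A}$ gives one inequality in \eqref{DPP}.

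For the lower bound, let $r^*\in{\cal A}$ be any control attaining $u(x,y)=u^{r^*}(x,y)$. The single-control identity \eqref{babyDPP}, which follows from uniqueness by time-shifting $r^*$, gives $u^{r^*}(x,y)=t+u^{r^*}(S^{r^*}(t),I^{r^*}(t))$, where the last term is the remaining eradication time for the tail of $r^*$ started at time $t$. Bounding it below by the value function, $u^{r^*}(S^{r^*}(t),I^{r^*}(t))\ge u(S^{r^*}(t),I^{r^*}(t))$, yields $u(x,y)\ge t+u(S^{r^*}(t),I^{r^*}(t))\ge\inf_{r\in{\cal A}}\{t+u(S^r(t),I^r(t))\}$. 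Combined with the upper bound this proves \eqref{DPP} and shows the minimum is attained at $r^*$. Moreover the chain $u(x,y)=t+u^{r^*}(S^{r^*}(t),I^{r^*}(t))\ge t+u(S^{r^*}(t),I^{r^*}(t))\ge u(x,y)$ must be an equality throughout, which gives the final assertion $u(S^{r^*}(t),I^{r^*}(t))=u^{r^*}(S^{r^*}(t),I^{r^*}(t))$.

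I expect the only real friction to be bookkeeping around the time-shift: pinning down the convention under which $u^r(S^r(t),I^r(t))$ in the statement means ``remaining time using the tail of $r$'', checking via Lemma \ref{ExistenceLemmaSIRcontrol} that the shifted and concatenated controls generate exactly the trajectories claimed, and tracking the degenerate cases $y=\mu$ and $I^r(t)=\mu$. None of this is deep — measurability of $\hat r$ and $\hat r\in{\cal A}$ are immediate — but it is where the argument needs care.
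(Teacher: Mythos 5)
Your proposal is correct and follows essentially the same route as the paper: the lower bound via an optimal control $r^*$ together with the identity \eqref{babyDPP} and $u^{r^*}\ge u$, the upper bound via concatenating an arbitrary $r$ on $[0,t]$ with an optimal control for $(S^r(t),I^r(t))$ and using uniqueness of solutions to identify the shifted trajectory, and the final assertion read off from the resulting equality chain. The extra bookkeeping you flag (the case $y=\mu$, $I^r(t)\ge\mu$, the time-shift convention) matches what the paper handles implicitly or via its claim \eqref{keyTimeIdentity}, so no substantive difference.
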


\begin{proof}
We note that $0\le t\le u^r(x,y)$ for $r\in {\cal A}$ and 
$$
u^r(x,y)=t+u^r(S^r(t), I^r(t)).
$$
For any $r^*\in {\cal A}$ such that $u(x,y)=u^{r^*}(x,y)$, 
\be\label{DPPreallymin}
u(x,y)=t+u^{r^*}(S^{r^*}(t),I^{r^*}(t))\ge t+u(S^{r^*}(t),I^{r^*}(t))\ge \inf_{r\in {\cal A}}\{t+u(S^r(t),I^r(t))\}.
\ee

\par To derive the opposite inequality, we fix $r\in {\cal A}$ and choose $r^*\in {\cal A}$ such that 
\be\label{SettingUpkeyTimeIdentity}
u(S^r(t),I^r(t))=u^{r^*}(S^r(t),I^r(t)).
\ee
Let us also define 
\be\label{overlinearrr}
\overline{r}(s)=
\begin{cases}
r(s), \quad &0\le s\le t\\
r^*(s-t), \quad &t\le s<\infty.
\end{cases}
\ee
We claim that 
\be\label{keyTimeIdentity}
u^{\overline{r}}(S^{\overline{r}}(t), I^{\overline{r}}(t))=u^{r^*}(S^r(t), I^r(t)).
\ee
In particular, this common number is the first time $s=s^*$ the solution of 
\be\label{SIRcontrolex}
\begin{cases}
\dot X(s)=-\beta X(s) Y(s)-r^*(s-t) X(s)\\
\dot Y(s)=\beta X(s)Y(s) -\gamma Y(s) 
\end{cases} \; (s>t)
\ee 
with $X(t)=S^r(t), Y(t)=I^r(t)\ge \mu$ satisfies $Y(s)=\mu$. That $u^{\overline{r}}(S^{\overline{r}}(t), I^{\overline{r}}(t))=s^*$ follows from \eqref{overlinearrr}; note in particular that $S^{\overline{r}}(s)=X(s)$ and $ I^{\overline{r}}(s)=Y(s)$ for $s\ge t$. 
The right hand side of \eqref{keyTimeIdentity} also equals $s^*$ once we note $S(\tau)=X(\tau+t)$ and $I(\tau)=Y(\tau+t)$ solve \eqref{SIRcontrol} 
with $r^*$ and satisfy $S(0)=S^r(t)$ and $I(0)=I^r(t)$.

\par By \eqref{SettingUpkeyTimeIdentity} and \eqref{keyTimeIdentity},
\begin{align}
u(x,y)&\le u^{\overline{r}}(x,y)\\
&= t+u^{\overline{r}}(S^{\overline{r}}(t), I^{\overline{r}}(t))\\
&= t+u^{r^*}(S^r(t),I^r(t))\\
&=t+u(S^r(t),I^r(t)).
\end{align}
That is, 
$$
u(x,y)\le  \inf_{r\in {\cal A}}\{t+u(S^r(t),I^r(t))\}.
$$
In view of \eqref{DPPreallymin}, equality holds in this inequality; the infimum is achieved for any $r^*\in {\cal A}$ such that $u(x,y)=u^{r^*}(x,y)$, 
and  we also note $u^{r^*}(S^{r^*}(t),I^{r^*}(t))=u(S^{r^*}(t),I^{r^*}(t))$.
\end{proof}

\par A corollary of dynamic programming is that the value function $u$ is a viscosity solution of \eqref{HJB} 
\be
\beta xy \partial_xu+x(\partial_xu)^++(\gamma-\beta x)y\partial_yu=1
\ee
in $(0,\infty)\times(\mu,\infty)$.
\begin{cor}\label{ViscProperty}
The value function $u$ is a viscosity solution of the HJB equation \eqref{HJB}.
\end{cor}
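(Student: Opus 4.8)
The plan is to extract both viscosity inequalities of Definition~\ref{ViscSolnDefn} from the dynamic programming principle \eqref{DPP}, following the standard template for time-optimal control (Chapter~IV of \cite{MR1484411}). The one piece of algebra needed is that, for $x\ge0$ and $p,q\in\R$,
$$
\max_{a\in[0,1]}\bigl(\beta xy\,p+axp+(\gamma-\beta x)y\,q\bigr)=\beta xy\,p+xp^++(\gamma-\beta x)y\,q ,
$$
since $\max_{a\in[0,1]}axp=xp^+$. Taking $p=\partial_xu$, $q=\partial_yu$ and noting that $-\beta xyp-axp+(\beta x-\gamma)yq$ is the time derivative of $u$ along a trajectory of \eqref{SIRcontrol} driven by the \emph{constant} control $a\in[0,1]$, this identity exhibits \eqref{HJB} as the Bellman equation $\max_{a}\{-\tfrac{d}{dt}u\}=1$ predicted by \eqref{DPP}.

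\emph{Subsolution inequality \eqref{HJBup}.} Let $\varphi\in C^1$ with $u-\varphi$ having a local maximum at $(x_0,y_0)\in(0,\infty)\times(\mu,\infty)$, normalized so that $u(x_0,y_0)=\varphi(x_0,y_0)$ and $u\le\varphi$ nearby. Fix $a\in[0,1]$, regard it as the constant control in $\cal A$, and let $S^a,I^a$ solve \eqref{SIRcontrol} from $(x_0,y_0)$. Since $y_0>\mu$ we have $u(x_0,y_0)>0$, so for small $t>0$ the point $(S^a(t),I^a(t))$ lies in the neighborhood and $t\le u(x_0,y_0)$; then \eqref{DPP} gives
$$
\varphi(x_0,y_0)=u(x_0,y_0)\le t+u(S^a(t),I^a(t))\le t+\varphi(S^a(t),I^a(t)) .
$$
As $a$ is constant, the right side of \eqref{SIRcontrol} is continuous in $t$, so $t\mapsto\varphi(S^a(t),I^a(t))$ is $C^1$; dividing by $t$ and letting $t\to0^+$ gives $\beta x_0y_0\partial_x\varphi+ax_0\partial_x\varphi+(\gamma-\beta x_0)y_0\partial_y\varphi\le1$ at $(x_0,y_0)$. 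Maximizing over $a\in[0,1]$ and invoking the identity above yields \eqref{HJBup}.

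\emph{Supersolution inequality \eqref{HJBdown}.} Let $\psi\in C^1$ with $u-\psi$ having a local minimum at $(x_0,y_0)\in(0,\infty)\times(\mu,\infty)$, normalized so $u(x_0,y_0)=\psi(x_0,y_0)$ and $u\ge\psi$ nearby. By the existence result of Section~\ref{ExistenceSection} there is an optimal rate $r^*\in\cal A$ for these initial conditions, so Proposition~\ref{DPPprop} gives $u(x_0,y_0)=t+u(S^{r^*}(t),I^{r^*}(t))$ for $t\in[0,u(x_0,y_0)]$, whence for small $t>0$
$$
\psi(x_0,y_0)=u(x_0,y_0)=t+u(S^{r^*}(t),I^{r^*}(t))\ge t+\psi(S^{r^*}(t),I^{r^*}(t)) .
$$
Writing $\psi(S^{r^*}(t),I^{r^*}(t))-\psi(x_0,y_0)=\int_0^t\frac{d}{ds}\psi(S^{r^*}(s),I^{r^*}(s))\,ds$, dividing by $t$, and letting $t\to0^+$, the terms of $\frac{d}{ds}\psi$ not containing $r^*$ converge (by continuity of $S^{r^*},I^{r^*}$ from Lemma~\ref{ExistenceLemmaSIRcontrol} and of $\nabla\psi$) to $-\beta x_0y_0\partial_x\psi+(\beta x_0-\gamma)y_0\partial_y\psi$, while the pointwise bound $-r^*(s)S^{r^*}(s)\partial_x\psi(S^{r^*}(s),I^{r^*}(s))\ge -S^{r^*}(s)\bigl(\partial_x\psi(S^{r^*}(s),I^{r^*}(s))\bigr)^+$ (valid since $0\le r^*\le1$ and $S^{r^*}\ge0$) shows the remaining average tends to at least $-x_0(\partial_x\psi)^+$. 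Thus $0\ge1-\beta x_0y_0\partial_x\psi+(\beta x_0-\gamma)y_0\partial_y\psi-x_0(\partial_x\psi)^+$ at $(x_0,y_0)$, which rearranges to \eqref{HJBdown}.

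The only delicate point — and the one I would single out as the main (if modest) obstacle — is this last limit: since $r^*$ is merely $L^\infty$, one cannot differentiate $s\mapsto\psi(S^{r^*}(s),I^{r^*}(s))$ at $s=0$, so one must work with the averaged difference quotient and dominate the $r^*$-term pointwise by $S^{r^*}(\partial_x\psi)^+$ before averaging; this bypasses any Lebesgue-point argument and reproduces exactly the $x_0(\partial_x\psi)^+$ term of \eqref{HJB}. The rest is a direct application of \eqref{DPP}, the continuity estimates of Section~\ref{ExistenceSection}, and the chain rule.
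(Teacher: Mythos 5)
Your proposal is correct and follows essentially the same route as the paper: the subsolution inequality via constant controls $a\in[0,1]$ in the dynamic programming principle together with $\max_{a\in[0,1]}axp=xp^+$, and the supersolution inequality via an optimal $r^*$, Proposition~\ref{DPPprop}, and the averaged difference quotient with the pointwise bound $-r^*(s)S^{r^*}(s)\partial_x\psi\ge -S^{r^*}(s)(\partial_x\psi(S^{r^*}(s),I^{r^*}(s)))^+$. The point you flag as delicate (working with the integral average since $r^*$ is only $L^\infty$) is exactly how the paper handles it.
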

\begin{proof}
Fix $x_0>0$ and $y_0>\mu$ and suppose that $u-\varphi$ has a local maximum at $(x_0,y_0)$; here $\varphi$ is a continuously differentiable function defined on a neighborhood of $(x_0,y_0)$.  We further assume $a\in [0,1]$, $r(t)=a$ for all $t\ge 0$, and  $S^r$ and $I^r$ is the solution of \eqref{SIRcontrol} with $S^r(0)=x_0$ and $I^r(0)=y_0$.  It follows that 
$$
(u-\varphi)(S^r(t),I^r(t))\le (u-\varphi)(x_0,y_0)
$$
for all $t\ge 0$ small.  By dynamic programming $u(x_0,y_0)\le t+ u(S^r(t),I^r(t))$ for $t\ge 0$. As a result,
$$
-t\le u(S^r(t),I^r(t))-u(x_0,y_0)\le \varphi(S^r(t),I^r(t))-\varphi(x_0,y_0)
$$
for all $t\ge 0$ small. In particular, 
\begin{align}
-1&\le \left.\frac{d}{dt} \varphi(S^r(t),I^r(t))\right|_{t=0}\\
&=-(\beta x_0y_0 +ax_0) \partial_x\varphi(x_0,y_0)-(\gamma-\beta x_0)y_0\partial_y\varphi(x_0,y_0).
\end{align}
Rearranging this inequality gives 
$$
\beta x_0y_0 \partial_x\varphi(x_0,y_0)+x_0a\partial_x\varphi(x_0,y_0)+(\gamma-\beta x_0)y_0\partial_y\varphi(x_0,y_0)\le 1.
$$
And taking the supremum over all $a\in [0,1]$ we find 
$$
\beta x_0y_0 \partial_x\varphi(x_0,y_0)+x_0(\partial_x\varphi(x_0,y_0))^++(\gamma-\beta x_0)y_0\partial_y\varphi(x_0,y_0)\le 1.
$$

\par Conversely, suppose $u-\psi$ has a local minimum at $(x_0,y_0)$ and $r^*\in{\cal A}$ such that $u(x_0,y_0)=u^{r^*}(x_0,y_0)$.  Here 
$\psi$ is a continuously differentiable function defined on a neighborhood of $(x_0,y_0)$.  By Proposition \ref{DPPprop},
$$
u(x_0,y_0)= t+ u(S^{r^*}(t),I^{r^*}(t))
$$
for all small $t> 0$, where $S^{r^*}$ and $I^{r^*}$ is the solution of \eqref{SIRcontrol} with $S^{r^*}(0)=x_0$ and $I^{r^*}(0)=y_0$.  Consequently, 
$$
-t=u(S^{r^*}(t),I^{r^*}(t))-u(x_0,y_0)\ge \psi(S^{r^*}(t),I^{r^*}(t))-\psi(x_0,y_0)
$$
for all small $t> 0$.  As $0\le r^*\le 1$,
\begin{align}
-1&\ge \frac{1}{t}( \psi(S^{r^*}(t),I^{r^*}(s))-\psi(S^{r^*}(0),I^{r^*}(s)))\\
&= \frac{1}{t}\int^t_0\frac{d}{ds}\psi(S^{r^*}(s),I^{r^*}(s))ds\\ 
&=\frac{1}{t}\int^t_0\partial_x\psi(S^{r^*}(s),I^{r^*}(s))(-\beta S^{r^*}(s) I^{r^*}(s)-r^{*}(s)S^{r^*}(s))+ \\
&\hspace{2in}\partial_y\psi(S^{r^*}(s),I^{r^*}(s))(\beta S^{r^*}(s)I^{r^*}(s) -\gamma I^{r^*}(s)) ds\\
&\ge \frac{1}{t}\int^t_0\left[-\beta S^{r^*}(s) I^{r^*}(s)\partial_x\psi(S^{r^*}(s),I^{r^*}(s))- S^{r^*}(s)\partial_x\psi(S^{r^*}(s),I^{r^*}(s))^+ + \right.\\
&\hspace{1.5in}\left. (\beta S^{r^*}(s)I^{r^*}(s)-\gamma I^{r^*}(s))\partial_y\psi(S^{r^*}(s),I^{r^*}(s)) \right]ds.
\end{align}
Sending $t\rightarrow 0^+$ gives 
$$
-1\ge -\beta x_0y_0 \partial_x\psi(x_0,y_0)-x_0(\partial_x\psi(x_0,y_0))^+-(\gamma-\beta x_0)y_0\partial_y\psi(x_0,y_0).
$$
That is
$$
\beta x_0y_0 \partial_x\psi(x_0,y_0)+x_0(\partial_x\psi(x_0,y_0))^++(\gamma-\beta x_0)y_0\partial_y\psi(x_0,y_0)\ge 1.
$$
\end{proof}

\par We will now establish \eqref{BCatInfinity} which asserts 
\be
\lim_{ x+y\rightarrow\infty}u(x,y)=\infty.
\ee
This will be a direct consequence of the following lemma. 
\begin{lem}
For each $x\ge 0$ and $y\ge \mu$, 
\be\label{LB2u}
u(x,y)\ge \frac{\ln(x+y)-\ln(\gamma/\beta+\mu)}{\max\{\gamma,1\}}.
\ee
\end{lem}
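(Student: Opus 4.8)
The plan is to mirror the proof of the upper bound \eqref{uarrUpperBound}: estimate $u^r(x,y)$ from below for every $r\in{\cal A}$ by tracking a well-chosen function along the controlled trajectory, and then take the minimum over $r$. The natural function here is the logarithmic one $w(x,y)=\ln(x+y)$, and the value of $w$ at the terminal time will be controlled using Corollary \ref{Sulessthan}.

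First I would fix $r\in{\cal A}$, write $S^r,I^r$ for the solution of \eqref{SIRcontrol} with $S^r(0)=x$ and $I^r(0)=y$, and set $T:=u^r(x,y)$. Since $T$ is the first time at which $I^r$ equals $\mu$ and $I^r(0)=y\ge\mu$, we have $I^r(t)\ge\mu>0$ for all $t\in[0,T]$, hence $S^r(t)+I^r(t)\ge\mu$ on $[0,T]$. By the Lipschitz bounds of Lemma \ref{ExistenceLemmaSIRcontrol}, $t\mapsto w(S^r(t),I^r(t))$ is then absolutely continuous on $[0,T]$, and differentiating with \eqref{SIRcontrol} gives
\[
\frac{d}{dt}w(S^r(t),I^r(t))=\frac{\dot S^r(t)+\dot I^r(t)}{S^r(t)+I^r(t)}=\frac{-r(t)S^r(t)-\gamma I^r(t)}{S^r(t)+I^r(t)}
\]
for a.e.\ $t\in[0,T]$. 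Because $0\le r(t)\le 1$ and $S^r,I^r\ge 0$, the numerator satisfies $r(t)S^r(t)+\gamma I^r(t)\le\max\{\gamma,1\}\,(S^r(t)+I^r(t))$, so the derivative is bounded below by $-\max\{\gamma,1\}$.

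Integrating this differential inequality over $[0,T]$ yields $\ln(S^r(T)+I^r(T))-\ln(x+y)\ge-\max\{\gamma,1\}\,T$, i.e.
\[
u^r(x,y)=T\ge\frac{\ln(x+y)-\ln\!\big(S^r(T)+I^r(T)\big)}{\max\{\gamma,1\}}.
\]
At the terminal time $I^r(T)=\mu$, and Corollary \ref{Sulessthan} (applied when $x>0$ and $y>\mu$; the case $x=0$ is trivial since then $S^r\equiv 0$) gives $\beta S^r(T)<\gamma$, so $S^r(T)+I^r(T)<\gamma/\beta+\mu$ and therefore $\ln(S^r(T)+I^r(T))\le\ln(\gamma/\beta+\mu)$. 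Substituting this bound and then taking the minimum over $r\in{\cal A}$ produces \eqref{LB2u}.

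I do not expect any real obstacle: the estimate is essentially a one-line differential inequality of the same flavour as the one behind \eqref{uarrUpperBound}. The only items needing a word of justification are that $w\circ(S^r,I^r)$ is absolutely continuous on $[0,T]$ so that the fundamental theorem of calculus applies — which follows from Lemma \ref{ExistenceLemmaSIRcontrol} together with $S^r+I^r\ge\mu$ on $[0,T]$ — and the application of Corollary \ref{Sulessthan} at the endpoint, which is why the argument runs most smoothly for $x>0$, $y>\mu$. Finally note that \eqref{LB2u} is automatically satisfied whenever $x+y\le\gamma/\beta+\mu$ because the right-hand side is then nonpositive, so the content of the estimate — and its use in deriving \eqref{BCatInfinity} — concerns only large $x+y$.
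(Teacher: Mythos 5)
Your argument is correct and is essentially the paper's own proof in disguise: the paper integrates the inequality $\frac{d}{dt}v(S^r,I^r)\ge -1$ for $v=\frac{\ln(x+y)-\ln(\gamma/\beta+\mu)}{\max\{\gamma,1\}}$, which is exactly your differential inequality $\frac{d}{dt}\ln(S^r+I^r)=\frac{-r S^r-\gamma I^r}{S^r+I^r}\ge-\max\{\gamma,1\}$ after an affine normalization, and then bounds the terminal value using $I^r(T)=\mu$ and $S^r(T)\le\gamma/\beta$. If anything, you are slightly more explicit than the paper in invoking Corollary \ref{Sulessthan} to control $S^r(T)$ at the endpoint, but the route is the same.
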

\begin{proof}
Set 
$$
w(x,y)=\frac{\ln(x+y)-\ln(\gamma/\beta+\mu)}{\max\{\gamma,1\}}.
$$
Observe 
\be
\beta xy \partial_xw+x(\partial_xw)^++(\gamma-\beta x)y\partial_yw =\frac{1}{\max\{\gamma,1\}}\frac{x+\gamma y}{x+y}\le 1
\ee
and 
\be\label{BC2}
w(x,\mu)\le 0\;\text{ for $0\le x\le \gamma/\beta$}.
\ee
It then follows that if $r\in {\cal A}$,
\begin{align}
&\frac{d}{dt}w(S^r(t),I^r(t))\\
&=
\partial_xw(S^{r}(s),I^{r}(s))(-\beta S^{r}(t) I^{r}(t)-r(t)S^{r}(t))+\partial_yw(S^{r}(t),I^{r}(t))(\beta S^{r}(t)I^{r}(t) -\gamma I^{r}(t))\\
&\ge -\beta S^{r}(t) I^{r}(t)\partial_xw(S^{r}(s),I^{r}(s)) -S^{r}(t)\partial_xw(S^{r}(t),I^{r}(t))^+\\
&\hspace{2in}-(\gamma-\beta S^{r}(t)) I^{r}(t)\partial_yw(S^{r}(t),I^{r}(t))\\
&\ge-1
\end{align}
for almost every $t\ge 0$. Integrating from $t=0$ to $t=u^r(x,y)$ gives 
$$
w(S^r(u^r(x,y)),I^r(u^r(x,y)))-w(x,y) \ge -u^r(x,y).
$$
As $I^r(u^r(x,y))=\mu$, we can apply \eqref{BC2} to find $w(x,y)\le u^r(x,y)$. Since $r\in{\cal A}$ is arbitrary, $w(x,y)\le u(x,y).$
\end{proof}

\section{Uniqueness}\label{UniqueSect}
In this section, we will argue that the value function $u$ is the unique continuous viscosity solution of the HJB equation \eqref{HJB} in $(0,\infty)\times(\mu,\infty)$ which satisfies conditions \eqref{BCsegment}, \eqref{BCsegment2}, and \eqref{BCatInfinity}; recall that this is the statement of Theorem \ref{thm0}.  To this end, 
we will adapt the technique used to prove Theorem 2.6 in Chapter IV section 2 of \cite{MR1484411}, which is in turn based upon reference \cite{MR1001919}.  Theorem 2.6 in Chapter IV section 2 of \cite{MR1484411} is a general result on the comparison of viscosity sub- and supersolutions to HJB equations arising in time optimal control. In this  general setting, the domain of the time function is the collection of all points for which the associated control ODE has a solution which arrives at a given target in a finite time. The main idea is to change variables so that the corresponding HJB equation is proper. In our framework, this can be accomplished by 
setting
$$
v:=e^{-u}.
$$

\par In particular, we note that $v$ is a positive, continuous viscosity solution of 
\be\label{HJBvee}
v+\beta xy \partial_xv-x(\partial_xv)^-+(\gamma-\beta x)y\partial_yv=0
\ee
in $(0,\infty)\times(\mu,\infty)$.  In view of \eqref{BCsegment}, \eqref{BCsegment2}, and \eqref{BCatInfinity}, $v$ additionally satisfies 
\be\label{BCsegmentv}
v(x,\mu)=1
\ee
for $0\le x\le \gamma/\beta$,
\be\label{BCsegment2v}
v(0,y)=\left(\frac{y}{\mu}\right)^{-1/\gamma}
\ee
for $y\ge \mu$, and 
\be\label{BCatInfinityv}
\text{$v$ is bounded from above}.
\ee
As a result, in order to conclude Theorem \ref{thm0},  it suffices to prove the following claim. 

\begin{prop}\label{VeeProp}
Assume $v^1,v^2: [0,\infty)\times[\mu,\infty)\rightarrow(0,\infty)$ are continuous viscosity solutions of \eqref{HJBvee} in $(0,\infty)\times(\mu,\infty)$ which satisfy \eqref{BCsegmentv}, \eqref{BCsegment2v}, and \eqref{BCatInfinityv}. Then $v^1\equiv v^2$.
\end{prop}

\par We will verify uniqueness by employing the celebrated ``doubling the variables" argument of Crandall and Lions \cite{MR690039}. Before carrying out these details, we will show how to deduce uniqueness under the additional assumption that $v^1$ and $v^2$ are continuously differentiable in $(0,\infty)\times(\mu,\infty)$. This will motivate 
the subsequent viscosity solutions argument. 

\begin{proof}[Proof of Proposition \ref{VeeProp} assuming $v^1,v^2$ are continuously differentiable] Choose $g: [0,\infty)\rightarrow [0,\infty)$ to be any smooth, nondecreasing function which vanishes on $[0,\gamma/\beta]$ and is positive on $(\gamma/\beta,\infty)$. Next set 
$$
w(x,y):=
\begin{cases}
\displaystyle x+y+\frac{g(x)}{y-\mu},\quad & x\ge 0,\; y>\mu\\
x+y,\quad & 0\le x\le \gamma/\beta,\; y=\mu.
\end{cases}
$$
Note that 
\begin{align}
w+\beta xy \partial_xw+(\gamma-\beta x)y\partial_yw = x+y+\gamma y +\frac{g(x)+\beta x y g'(x)}{y-\mu}+\beta y\frac{(x-\gamma/\beta)g(x)}{(y-\mu)^2}
\end{align}
is a positive function in $(0,\infty)\times(\mu,\infty)$; this is due to our assumptions that $g(x)$, $g'(x)$, and $(x-\gamma/\beta)g(x)$ are all nonnegative. We conclude
\be\label{wDiffIneq}
w+\beta xy \partial_xw+(\gamma-\beta x)y\partial_yw\ge 0
\ee
in $(0,\infty)\times(\mu,\infty)$. 

\par Let us consider  the quantity
\be\label{emmm}
m:=\sup\left\{v^1(x,y)-v^2(x,y)-\epsilon w(x,y): x\ge 0,\;y>\mu \right\}
\ee
for a given $\epsilon>0$. We claim that 
\be\label{mepsilonInequality}
m\le 0.
\ee
This would in turn imply that $v^1\le v^2+\epsilon w$ for all $\epsilon>0$, and therefore $v^1\le v^2$. Likewise, we would have 
$v^2\le v^1$.  Consequently, we will focus on verifying inequality \eqref{mepsilonInequality}.
 
\par To this end, we note that since $v^1$ is bounded from above and that $v^2$ and $w$ are positive, $m$ is finite.  And as 
\be\label{doubleLim1}
\lim_{\substack{x+y\rightarrow\infty \\ y>\mu}}w(x,y)=\infty
\ee
 and 
 \be\label{doubleLim2}
 \lim_{\substack{(x,y)\rightarrow(x^0,\mu) \\ y>\mu}}w(x,y)=\infty
 \ee
 for each $x^0>\gamma/\beta$, there is $(\hat{x},\hat{y})\in [0,\infty)\times[\mu,\infty)$ for which  
 $$
 m=v^1(\hat{x},\hat{y})-v^2(\hat{x},\hat{y})-\epsilon w(\hat{x},\hat{y}).
 $$
 In particular, if $\hat{y}=\mu$ then $\hat{x}\in [0,\gamma/\beta]$. In this case, $v^1(\hat{x},\hat{y})=v^2(\hat{x},\hat{y})$ by \eqref{BCsegmentv} so  \eqref{mepsilonInequality} holds. 
Using  \eqref{BCsegment2v}, we can similarly conclude that \eqref{mepsilonInequality} holds if $\hat{x}=0$.

\par Now suppose that $\hat{x}>0$ and $\hat{y}>\mu$.  Our hypothesis that $v^1$ and $v^2$ are continuously differentiable gives 
$$
\begin{cases}
0=\partial_xv^1(\hat{x},\hat{y})-\partial_xv^2(\hat{x},\hat{y})-\epsilon \partial_xw(\hat{x},\hat{y})\\\\
0=\partial_yv^1(\hat{x},\hat{y})-\partial_yv^2(\hat{x},\hat{y})-\epsilon \partial_yw(\hat{x},\hat{y}).
\end{cases}
$$
In particular, we note that since $ \partial_xw(\hat{x},\hat{y})\ge 0$ 
\begin{align*}
(\partial_xv^1(\hat{x},\hat{y}))^-&=\max\{-\partial_xv^1(\hat{x},\hat{y}),0\}\\ 
&=\max\{-\partial_xv^2(\hat{x},\hat{y})-\epsilon \partial_xw(\hat{x},\hat{y}),0\}\\
&\le \max\{-\partial_xv^2(\hat{x},\hat{y}),0\}\\
&=(\partial_xv^2(\hat{x},\hat{y}))^-.
\end{align*}
\par Since $v^1$ and $v^2$ are solutions of \eqref{HJBvee} and $w$ satisfies \eqref{wDiffIneq},
\begin{align*}
m&=v^1(\hat{x},\hat{y})-v^2(\hat{x},\hat{y})-\epsilon w(\hat{x},\hat{y})\\
&=-\beta \hat{x} \hat{y}\partial_xv^1(\hat{x},\hat{y})+\hat{x}(v^1(\hat{x},\hat{y}))^-
-(\gamma-\beta \hat{x})\hat{y} v^1(\hat{x},\hat{y})\\
&\quad\quad +\beta \hat{x} \hat{y}\partial_xv^2(\hat{x},\hat{y})-\hat{x}(v^2(\hat{x},\hat{y}))^-
+(\gamma-\beta \hat{x})\hat{y} v^2(\hat{x},\hat{y})-\epsilon w(\hat{x},\hat{y})\\
&=-\beta \hat{x} \hat{y} (\partial_xv^1(\hat{x},\hat{y})-\partial_xv^2(\hat{x},\hat{y})) \\
&\quad - (\gamma-\beta \hat{x})\hat{y}(\partial_yv^1(\hat{x},\hat{y})-\partial_yv^2(\hat{x},\hat{y}))\\
&\quad +\hat{x}[(\partial_xv^1(\hat{x},\hat{y}))^--(\partial_xv^2(\hat{x},\hat{y}))^-]-\epsilon w(\hat{x},\hat{y})\\
&=-\epsilon\left[w(\hat{x},\hat{y})+\beta \hat{x} \hat{y}\partial_xw(\hat{x},\hat{y})+ (\gamma-\beta \hat{x})\hat{y}\partial_yw(\hat{x},\hat{y})\right]\\
&\quad +\hat{x}[(\partial_xv^1(\hat{x},\hat{y}))^--(\partial_xv^2(\hat{x},\hat{y}))^-]\\
&\le 0.
\end{align*}
Therefore, we conclude that \eqref{mepsilonInequality} holds in all cases. 
 \end{proof}
 Now we will issue a proof of Proposition \ref{VeeProp} without assuming $v^1,v^2$ are continuously differentiable. Again we emphasize that this proposition and Corollary \ref{ViscProperty} imply Theorem \ref{thm0}.
 
 \begin{proof}[Proof of Proposition \ref{VeeProp}] 
 1. We will fix $\epsilon>0$ and use the same notation as in the proof of this assertion in the special case that $v^1,v^2$ are continuously differentiable.  In particular, our goal is to show that $m$ defined in \eqref{emmm} is nonpositive.  Accordingly, we set
\begin{align*}
m_{\alpha}:&=\sup\bigg\{v^1(x_1,y_1)-v^2(x_2,y_2)-\frac{\epsilon}{2}(w(x_1,y_1)+w(x_2,y_2))   \\ 
&\quad\quad -\frac{1}{2\alpha}((x_1-x_2)^2+(y_1-y_2)^2): x_1,x_2\ge 0, y_1,y_2>\mu\bigg\}
\end{align*}
for $\alpha>0$.  It is not hard to see that  
\be\label{malphaBounded}
-\infty<m\le m_{\alpha}\le \sup v^1<\infty
\ee
for each $\alpha>0$. 

\par In addition, note that for any pairs $(x_1,y_1),(x_2,y_2)\in [0,\infty)\times(\mu,\infty)$
\begin{align}
&v^1(x_1,y_1)-v^2(x_2,y_2)-\frac{\epsilon}{2}(w(x_1,y_1)+w(x_2,y_2))-\frac{1}{2\alpha}((x_1-x_2)^2+(y_1-y_2)^2)\\
&\quad \le \sup v^1-\frac{\epsilon}{2}(w(x_1,y_1)+w(x_2,y_2)).
\end{align}
This inequality combined with \eqref{doubleLim1} and \eqref{doubleLim2} implies the existence of $(x^\alpha_1,y^\alpha_1),(x^\alpha_2,y^\alpha_2)\in [0,\infty)\times[\mu,\infty)$ such that 
\be\label{malphaIdentity}
m_{\alpha}=v^1(x^\alpha_1,y^\alpha_1)-v^2(x^\alpha_2,y^\alpha_2)-\frac{\epsilon}{2}(w(x^\alpha_1,y^\alpha_1)+w(x^\alpha_2,y^\alpha_2))    -\frac{1}{2\alpha}((x^\alpha_1-x^\alpha_2)^2+(y^\alpha_1-y^\alpha_2)^2)
\ee
and
\be\label{alphaSeqBounded}
\sup_{\alpha>0}(x^\alpha_1+y^\alpha_1 +x^\alpha_2+y^\alpha_2)<\infty.
\ee
Furthermore, if $y^\alpha_1=\mu$, then $x^\alpha_1\in [0,\gamma/\beta]$; and if $y^\alpha_2=\mu$, then $x^\alpha_2\in [0,\gamma/\beta]$.
Proposition 3.7 of \cite{MR1118699} also implies 
\be\label{DoublingVarLim}
\lim_{\alpha\rightarrow0^+}\frac{1}{2\alpha}((x^\alpha_1-x^\alpha_2)^2+(y^\alpha_1-y^\alpha_2)^2)=0
\ee
and 
$$
\lim_{\alpha\rightarrow0^+}m_\alpha=m.
$$

\par 2. In view of \eqref{alphaSeqBounded}, we may select a sequence of positive numbers $\alpha_k$ tending to $0$ as $k\rightarrow\infty$ so that 
$$
\hat x:=\lim_{k\rightarrow\infty}x^{\alpha_k}_1=\lim_{k\rightarrow\infty}x^{\alpha_k}_2
$$
and
$$
\hat y:=\lim_{k\rightarrow\infty}y^{\alpha_k}_1=\lim_{k\rightarrow\infty}y^{\alpha_k}_2.
$$
If $\hat x=0$, then 
$$
m=\lim_{k\rightarrow\infty}m_{\alpha_k}=v^1(0,\hat{y})-v^2(0,\hat{y})-\epsilon w(0,\hat{y})\le 0
$$
by \eqref{BCsegment2v}.  Likewise, if $\hat y=\mu$ 
\begin{align}
m&=\lim_{k\rightarrow\infty}m_{\alpha_k}\\
&=\lim_{k\rightarrow\infty}\bigg(v^1(x^{\alpha_k}_1,y^{\alpha_k}_1)-v^2(x^{\alpha_k}_2,y^{\alpha_k}_2)-\frac{\epsilon}{2}(w(x^{\alpha_k}_1,y^{\alpha_k}_1)+w(x^{\alpha_k}_2,y^{\alpha_k}_2)) \\
&\quad   -\frac{1}{2{\alpha_k}}((x^{\alpha_k}_1-x^{\alpha_k}_2)^2+(y^{\alpha_k}_1-y^{\alpha_k}_2)^2)\bigg) \\
&=v^1(\hat x,\mu)-v^2(\hat{x},\mu)-\frac{\epsilon}{2} \lim_{k\rightarrow\infty}(w(x^{\alpha_k}_1,y^{\alpha_k}_1)+w(x^{\alpha_k}_2,y^{\alpha_k}_2)).
\end{align}
As $(w(x^{\alpha_k}_1,y^{\alpha_k}_1))_{k\in \N}$ and $(w(x^{\alpha_k}_1,y^{\alpha_k}_1))_{k\in \N}$ are bounded from above, it must be that $0\le \hat x\le\gamma/\beta$. This in turn implies 
$$
m\le v^1(\hat x,\mu)-v^2(\hat{x},\mu)=0
$$
by \eqref{BCsegmentv}.
 
 \par 3. Alternatively, $(\hat x,\hat y)\in (0,\infty)\times (\mu, \infty)$. In this case, $(x^{\alpha_k}_1, y^{\alpha_k}_1), (x^{\alpha_k}_2, y^{\alpha_k}_2)\in (0,\infty)\times(\mu,\infty)$ for all sufficiently large $k\in \N$. In view of \eqref{malphaIdentity}, the function of $(x_1, y_1)$
 $$
v^1(x_1,y_1)-\left[v^2(x^{\alpha_k}_2, y^{\alpha_k}_2)+\frac{\epsilon}{2}(w(x_1,y_1)+w(x^{\alpha_k}_2, y^{\alpha_k}_2))+\frac{1}{2\alpha_k}((x_1-x^{\alpha_k}_2)^2+(y_1-y^{\alpha_k}_2)^2)\right]
$$
has a maximum at $(x^{\alpha_k}_1, y^{\alpha_k}_1)$.  Since $v^1$ is a viscosity solution of \eqref{HJBvee}, 
\begin{align}\label{v1viscIneq}
&v^1(x^{\alpha_k}_1, y^{\alpha_k}_1)+\beta x^{\alpha_k}_1y^{\alpha_k}_1\left[\frac{\epsilon}{2}\partial_xw(x^{\alpha_k}_1, y^{\alpha_k}_1)+\frac{x^{\alpha_k}_1-x^{\alpha_k}_2}{\alpha_k}\right]\\
&\quad -x^{\alpha_k}_1\left[\frac{\epsilon}{2}\partial_xw(x^{\alpha_k}_1, y^{\alpha_k}_1)+\frac{x^{\alpha_k}_1-x^{\alpha_k}_2}{\alpha_k}\right]^-\\
&\quad+(\gamma-\beta x^{\alpha_k}_1)y^{\alpha_k}_1\left[\frac{\epsilon}{2}\partial_yw(x^{\alpha_k}_1, y^{\alpha_k}_1)+\frac{y^{\alpha_k}_1-y^{\alpha_k}_2}{\alpha_k}\right]\le0.
\end{align}

\par Likewise the function of $(x_2, y_2)$
  $$
v^2(x_2,y_2)-\left[v^1(x^{\alpha_k}_1, y^{\alpha_k}_1)-\frac{\epsilon}{2}(w(x^{\alpha_k}_1, y^{\alpha_k}_1)+w(x_2,y_2))-\frac{1}{2\alpha_k}((x_2-x^{\alpha_k}_1)^2+(y_2-y^{\alpha_k}_1)^2)\right]
$$
has a minimum at $(x^{\alpha_k}_2, y^{\alpha_k}_2)$. As $v^2$ is a viscosity solution of \eqref{HJBvee}, 
\begin{align}\label{v2viscIneq}
&v^2(x^{\alpha_k}_2, y^{\alpha_k}_2)+\beta x^{\alpha_k}_2y^{\alpha_k}_2\left[-\frac{\epsilon}{2}\partial_xw(x^{\alpha_k}_2, y^{\alpha_k}_2)-\frac{x^{\alpha_k}_2-x^{\alpha_k}_1}{\alpha_k}\right]\\
&\quad -x^{\alpha_k}_2\left[-\frac{\epsilon}{2}\partial_xw(x^{\alpha_k}_2, y^{\alpha_k}_2)-\frac{x^{\alpha_k}_2-x^{\alpha_k}_1}{\alpha_k}\right]^-\\
&\quad+(\gamma-\beta x^{\alpha_k}_2)y^{\alpha_k}_2\left[-\frac{\epsilon}{2}\partial_yw(x^{\alpha_k}_2, y^{\alpha_k}_2)-\frac{y^{\alpha_k}_2-y^{\alpha_k}_1}{\alpha_k}\right]\ge0.
\end{align}
We can then combine \eqref{v1viscIneq} and \eqref{v2viscIneq} to get 
\begin{align}\label{v1plusv2viscIneq}
m_{\alpha_k}&\le v^1(x^{\alpha_k}_1, y^{\alpha_k}_1) -v^2(x^{\alpha_k}_2, y^{\alpha_k}_2)-\frac{\epsilon}{2}(w(x^{\alpha_k}_1, y^{\alpha_k}_1)+w(x^{\alpha_k}_2, y^{\alpha_k}_2)) \\
&\le \beta x^{\alpha_k}_2y^{\alpha_k}_2\left[-\frac{\epsilon}{2}\partial_xw(x^{\alpha_k}_2, y^{\alpha_k}_2)-\frac{x^{\alpha_k}_2-x^{\alpha_k}_1}{\alpha_k}\right]-\beta x^{\alpha_k}_1y^{\alpha_k}_1\left[\frac{\epsilon}{2}\partial_xw(x^{\alpha_k}_1, y^{\alpha_k}_1)+\frac{x^{\alpha_k}_1-x^{\alpha_k}_2}{\alpha_k}\right]\\
&\quad + x^{\alpha_k}_1\left[\frac{\epsilon}{2}\partial_xw(x^{\alpha_k}_1, y^{\alpha_k}_1)+\frac{x^{\alpha_k}_1-x^{\alpha_k}_2}{\alpha_k}\right]^-
-x^{\alpha_k}_2\left[-\frac{\epsilon}{2}\partial_xw(x^{\alpha_k}_2, y^{\alpha_k}_2)-\frac{x^{\alpha_k}_2-x^{\alpha_k}_1}{\alpha_k}\right]^-\\
&\quad +(\gamma-\beta x^{\alpha_k}_2)y^{\alpha_k}_2\left[-\frac{\epsilon}{2}\partial_yw(x^{\alpha_k}_2, y^{\alpha_k}_2)-\frac{y^{\alpha_k}_2-y^{\alpha_k}_1}{\alpha_k}\right] \\ 
&\quad -(\gamma-\beta x^{\alpha_k}_1)y^{\alpha_k}_1\left[\frac{\epsilon}{2}\partial_yw(x^{\alpha_k}_1, y^{\alpha_k}_1)+\frac{y^{\alpha_k}_1-y^{\alpha_k}_2}{\alpha_k}\right] -\frac{\epsilon}{2}(w(x^{\alpha_k}_1, y^{\alpha_k}_1)+w(x^{\alpha_k}_2, y^{\alpha_k}_2)).
\end{align}

\par 4. We will now proceed to estimate some of the terms on the right hand side of \eqref{v1plusv2viscIneq}. First, observe 
\begin{align}\label{v1plusv2viscIneq2}
&-\beta x^{\alpha_k}_2y^{\alpha_k}_2\cdot \frac{x^{\alpha_k}_2-x^{\alpha_k}_1}{\alpha_k}-\beta x^{\alpha_k}_1y^{\alpha_k}_1\cdot \frac{x^{\alpha_k}_1-x^{\alpha_k}_2}{\alpha_k}\\
&\quad = \beta\frac{x^{\alpha_k}_1-x^{\alpha_k}_2}{\alpha_k}\left[  x^{\alpha_k}_2y^{\alpha_k}_2- x^{\alpha_k}_1y^{\alpha_k}_1 \right]\\
&\quad = \beta\frac{x^{\alpha_k}_1-x^{\alpha_k}_2}{\alpha_k}\left[  x^{\alpha_k}_2y^{\alpha_k}_2- x^{\alpha_k}_1y^{\alpha_k}_2+x^{\alpha_k}_1y^{\alpha_k}_2-x^{\alpha_k}_1y^{\alpha_k}_1 \right]\\
&\quad = -\beta\frac{(x^{\alpha_k}_2-x^{\alpha_k}_1)^2}{\alpha_k}y^{\alpha_k}_2+\beta\frac{x^{\alpha_k}_1-x^{\alpha_k}_2}{\sqrt{\alpha_k}}\frac{y^{\alpha_k}_2-y^{\alpha_k}_1}{\sqrt{\alpha_k}}x^{\alpha_k}_1\\
&=o(1)
\end{align}
as $k\rightarrow\infty$ by \eqref{DoublingVarLim}. Similarly,
\begin{align}\label{v1plusv2viscIneq3}
&-(\gamma-\beta x^{\alpha_k}_2)y^{\alpha_k}_2\frac{y^{\alpha_k}_2-y^{\alpha_k}_1}{\alpha_k}- (\gamma-\beta x^{\alpha_k}_1)y^{\alpha_k}_1\frac{y^{\alpha_k}_1-y^{\alpha_k}_2}{\alpha_k}\\
&\quad= \frac{y^{\alpha_k}_1-y^{\alpha_k}_2}{\alpha_k}\left((\gamma-\beta x^{\alpha_k}_2)y^{\alpha_k}_2- (\gamma-\beta x^{\alpha_k}_1)y^{\alpha_k}_1\right)\\
&\quad= -\gamma\frac{(y^{\alpha_k}_1-y^{\alpha_k}_2)^2}{\alpha_k}+\beta\frac{y^{\alpha_k}_1-y^{\alpha_k}_2}{\alpha_k}\left( x^{\alpha_k}_1 y^{\alpha_k}_1- x^{\alpha_k}_2 y^{\alpha_k}_2\right)\\
&\quad =-(\gamma-\beta x_1^{\alpha_k})\frac{(y^{\alpha_k}_1-y^{\alpha_k}_2)^2}{\alpha_k}+\beta y^{\alpha_k}_2\frac{x^{\alpha_k}_1-x^{\alpha_k}_2}{\sqrt{\alpha_k}}\frac{y^{\alpha_k}_1-y^{\alpha_k}_2}{\sqrt{\alpha_k}}\\
&=o(1)
\end{align}
as $k\rightarrow\infty$. 

\par Also notice 
\begin{align}
&x^{\alpha_k}_1\left[\frac{\epsilon}{2}\partial_xw(x^{\alpha_k}_1, y^{\alpha_k}_1)+\frac{x^{\alpha_k}_1-x^{\alpha_k}_2}{\alpha_k}\right]^-
-x^{\alpha_k}_2\left[-\frac{\epsilon}{2}\partial_xw(x^{\alpha_k}_2, y^{\alpha_k}_2)-\frac{x^{\alpha_k}_2-x^{\alpha_k}_1}{\alpha_k}\right]^- \\
&\quad = (x^{\alpha_k}_1-x^{\alpha_k}_2)\left[\frac{\epsilon}{2}\partial_xw(x^{\alpha_k}_1, y^{\alpha_k}_1)+\frac{x^{\alpha_k}_1-x^{\alpha_k}_2}{\alpha_k}\right]^-+\\
&\quad\quad x^{\alpha_k}_2\left(\left[\frac{\epsilon}{2}\partial_xw(x^{\alpha_k}_1, y^{\alpha_k}_1)+\frac{x^{\alpha_k}_1-x^{\alpha_k}_2}{\alpha_k}\right]^-
-\left[-\frac{\epsilon}{2}\partial_xw(x^{\alpha_k}_2, y^{\alpha_k}_2)-\frac{x^{\alpha_k}_2-x^{\alpha_k}_1}{\alpha_k}\right]^-\right)\\
&\quad = o(1)+x^{\alpha_k}_2\left(\left[\frac{\epsilon}{2}\partial_xw(x^{\alpha_k}_1, y^{\alpha_k}_1)+\frac{x^{\alpha_k}_1-x^{\alpha_k}_2}{\alpha_k}\right]^-
-\left[-\frac{\epsilon}{2}\partial_xw(x^{\alpha_k}_2, y^{\alpha_k}_2)-\frac{x^{\alpha_k}_2-x^{\alpha_k}_1}{\alpha_k}\right]^-\right)
\end{align}
 as $k\rightarrow\infty$.  Employing the elementary inequality 
$$
a^--b^-\le (a-b)^-\quad (a,b\in \R),
$$
we then find 
\begin{align}\label{v1plusv2viscIneq4}
&x^{\alpha_k}_1\left[\frac{\epsilon}{2}\partial_xw(x^{\alpha_k}_1, y^{\alpha_k}_1)+\frac{x^{\alpha_k}_1-x^{\alpha_k}_2}{\alpha_k}\right]^-
-x^{\alpha_k}_2\left[-\frac{\epsilon}{2}\partial_xw(x^{\alpha_k}_2, y^{\alpha_k}_2)-\frac{x^{\alpha_k}_2-x^{\alpha_k}_1}{\alpha_k}\right]^-\\
&\le  o(1)+x^{\alpha_k}_2\left[\frac{\epsilon}{2}\left(\partial_xw(x^{\alpha_k}_1, y^{\alpha_k}_1)+\partial_xw(x^{\alpha_k}_2, y^{\alpha_k}_2)\right)\right]^-\\
&\le  o(1)
\end{align} 
 as $k\rightarrow\infty$. Here we used $\partial_xw\ge 0$. 
 
 \par In view of \eqref{v1plusv2viscIneq2}, \eqref{v1plusv2viscIneq3}, and \eqref{v1plusv2viscIneq4}, we can send  $k\rightarrow\infty$ in \eqref{v1plusv2viscIneq} to get 
 \begin{align}
 m\le - \epsilon\left[w(\hat{x},\hat{y})+\beta \hat{x} \hat{y}\partial_xw(\hat{x},\hat{y})+ (\gamma-\beta \hat{x})\hat{y}\partial_yw(\hat{x},\hat{y})\right]
 \le 0
 \end{align}
 by \eqref{wDiffIneq}. As mentioned at the start of this proof, the proposition in question follows from the nonpositivity of $m$.  Therefore, we have obtained the desired conclusion.
   \end{proof}

\section{Local semiconcavity}\label{SemiConcaveSect}
Let us now investigate the differentiability of the value function. We'll argue that $u$ is twice differentiable almost everywhere and its Hessian is locally bounded above.  We will establish these properties by deriving various bounds on $u^r$ that are independent of $r\in {\cal A}$.  
With these goals in mind, we will study $u^r$ and $u$ on triangular domains 
$$
T_{N,\delta}:=\{(x,y)\in \R^2: x\ge \delta,y\ge \mu+\delta,\;\text{and}\; x+y\le N\}
$$
for $N,\delta\ge 0$ which satisfy
$$
\mu+2\delta<N.
$$

\par We will also employ the {\it flow} of the controlled SIR system \eqref{SIRcontrol}
$$
\Phi^r: [0,\infty)\times[\mu,\infty)\times[0,\infty)\rightarrow [0,\infty)^2; (x,y,t)\mapsto (\Phi^r_1(x,y,t),\Phi^r_2(x,y,t)).
$$
Here $S^r(t)=\Phi^r_1(x,y,t)$ and $I^r(t)=\Phi^r_2(x,y,t)$ is the solution of the controlled SIR system \eqref{SIRcontrol} with 
$S^r(0)=x$ and $I^r(0)=y$.  We note that for any $r\in {\cal A}$, 
$$
(x,y)\mapsto \Phi^r(x,y,t)\;\text{is smooth for each $t\ge 0$} 
$$
by Theorem 3.3 and Exercise 3.2 of \cite{MR587488}, and 
$$
(x,y,t)\mapsto \Phi^r_2(x,y,t)\;\text{is continuously differentiable}
$$
by Lemma \ref{ExistenceLemmaSIRcontrol}.

\par It also follows that $(x,y,t)\mapsto \Phi^r(x,y,t)$ is smooth whenever $r$ is smooth. Since $u^r$ satisfies the implicit 
equation
\be\label{impliciturequation}
\Phi_2^r(x,y,u^r(x,y))=\mu
\ee
for each $x>0$ and $y>\mu$, we can then differentiate this equation twice to obtain bounds on the second derivatives of $u^r$ when $r$ is smooth.  Of course, $r$ is in general not smooth. We will get around this by finding estimates which are independent of $r$ and using the fact that $r\mapsto u^r$ is continuous.

\par  To this end, we will employ the following assertion about solutions of linear ODEs.  Since this claim follows from an easy application of Gr\"onwall's lemma, we will omit the proof. 
\begin{lem}
Suppose $A(t)$ is a $2\times 2$ matrix for each $t\ge0$ and $f: [0,\infty)\rightarrow \R^2$.  If $z: [0,\infty)\rightarrow \R^2$ solves 
\be\label{zODE}
\dot z(t)=A(t)z(t)+f(t), \quad t\ge 0,
\ee
then
$$
\|z(t)\|^2\le e^{(2c+1)t}\left(\|z(0)\|^2 + \int^t_0e^{-(2c+1)s} \|f(s)\|^2ds\right)
$$
where $c\ge \max_{t\ge 0}\|A(t)\|$.
\end{lem}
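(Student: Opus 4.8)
The plan is to reduce the vector ODE to a scalar differential inequality for $\phi(t):=\|z(t)\|^2$ and then apply Gr\"onwall's lemma in its integrating-factor form.

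First I would record the regularity bookkeeping: since $A(\cdot)$ is bounded (by $c$ in operator norm) and $f$ is integrable as implicit in the setup, any solution $z$ of \eqref{zODE} is locally absolutely continuous, hence so is $\phi=\|z\|^2$, and $\phi$ is differentiable at almost every $t\ge 0$. At such a $t$ the chain rule gives
\[
\dot\phi(t)=2\,z(t)\cdot\dot z(t)=2\,z(t)\cdot A(t)z(t)+2\,z(t)\cdot f(t).
\]
I would then estimate the two terms on the right. The first satisfies $2\,z(t)\cdot A(t)z(t)\le 2\|A(t)\|\,\|z(t)\|^2\le 2c\,\|z(t)\|^2$, and the second, by the inequality $2ab\le a^2+b^2$, satisfies $2\,z(t)\cdot f(t)\le 2\|z(t)\|\,\|f(t)\|\le \|z(t)\|^2+\|f(t)\|^2$. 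Adding these produces the differential inequality
\[
\dot\phi(t)\le(2c+1)\,\phi(t)+\|f(t)\|^2\qquad\text{for a.e.\ }t\ge 0.
\]

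Next I would multiply through by the integrating factor $e^{-(2c+1)t}$, which turns the inequality into $\frac{d}{dt}\bigl(e^{-(2c+1)t}\phi(t)\bigr)\le e^{-(2c+1)t}\|f(t)\|^2$ for almost every $t$. Since the left-hand side is absolutely continuous, the fundamental theorem of calculus lets me integrate from $0$ to $t$ and obtain $e^{-(2c+1)t}\phi(t)-\phi(0)\le\int_0^t e^{-(2c+1)s}\|f(s)\|^2\,ds$. Rearranging gives
\[
\phi(t)\le e^{(2c+1)t}\left(\phi(0)+\int_0^t e^{-(2c+1)s}\|f(s)\|^2\,ds\right),
\]
which is precisely the claimed bound once one recalls $\phi=\|z\|^2$.

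There is no substantive obstacle here, which is presumably why the authors omit the proof; the only points deserving a moment's attention are the regularity facts used above — absolute continuity of $\|z\|^2$, so that the chain rule and the fundamental theorem of calculus hold in the almost-everywhere sense — and the deliberate use of the sharp constant $1$ in $2ab\le a^2+b^2$ (rather than a free parameter), which is exactly what produces the exponent $2c+1$ and the unweighted $\|f\|^2$ appearing in the statement.
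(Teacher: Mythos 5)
Your proof is correct and is exactly the argument the paper has in mind: the authors omit the proof, remarking only that the lemma "follows from an easy application of Gr\"onwall's lemma," and your derivation (differentiate $\|z\|^2$, bound the cross term with $2ab\le a^2+b^2$, then integrate with the factor $e^{-(2c+1)t}$) is that standard Gr\"onwall argument, producing precisely the stated constant $2c+1$.
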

\begin{rem}
Here $\|A(t)\|$ denotes the Frobenius norm of $A(t)$.
\end{rem}
We will now derive various bounds on the derivatives of $\Phi_2^r$ when $r$ is smooth.  
\begin{lem}\label{DerivativeBoundsPhi2}
Suppose $r\in {\cal A}$ is smooth and $N> \mu$. Then 
\begin{enumerate}[(i)]

\item $0\le \Phi^r_2(x,y,t)\le N$
\\
\item $|\partial_t\Phi^r_2(x,y,t)|\le \beta N^2+\gamma N$
\\
\item $|\partial_x\Phi^r_2(x,y,t)|,|\partial_y\Phi^r_2(x,y,t)|\le\displaystyle e^{(C+1/2) t}$
\\
\item $|\partial_t^2\Phi^r_2(x,y,t)|\le  (\beta N^2+N)\beta N+(\beta^2N^2+\gamma^2)N$
\\
\item $|\partial_x\partial_t\Phi^r_2(x,y,t)|,|\partial_y\partial_t\Phi^r_2(x,y,t)|\le \displaystyle \displaystyle e^{(C+1/2)t}(2\beta N+\gamma)$
\\
\item $|\partial^2_x\Phi^r_2(x,y,t)|,|\partial^2_y\Phi^r_2(x,y,t)|,|\partial_x\partial_y\Phi^r_2(x,y,t)|\le \displaystyle\frac{2\beta}{\sqrt{C}}e^{(2C+1) t}$

\end{enumerate}
for $(x,y)\in T_{0, N}$ and $t\ge 0$.  Here $C:=\sqrt{6\beta^2 N^2+2\gamma^2+2}.$
\end{lem}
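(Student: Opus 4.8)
The plan is to establish the six bounds in turn, using the a priori estimates of Lemma \ref{ExistenceLemmaSIRcontrol} together with the first- and second-order variational equations of the flow, which are available here because $r$ is smooth and hence $(x,y,t)\mapsto\Phi^r(x,y,t)$ is smooth. Throughout write $S=\Phi_1^r(x,y,\cdot)$, $I=\Phi_2^r(x,y,\cdot)$ for the solution with $S(0)=x$, $I(0)=y$, where $(x,y)\in T_{0,N}$; the only feature used is $x+y\le N$. Since Lemma \ref{ExistenceLemmaSIRcontrol} gives $S(t)+I(t)+R(t)=x+y$ with $R$ nondecreasing and $S,I\ge 0$, we get $0\le S(t),I(t)\le N$ for all $t\ge 0$, which is $(i)$. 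Then $(ii)$ is immediate from $\partial_t I=\beta SI-\gamma I$, and $(iv)$ follows from the identity $\ddot I=-\beta(\beta I+r)SI+(\beta S-\gamma)^2I$ derived in Lemma \ref{ExistenceLemmaSIRcontrol}: bound each factor of $S,I$ by $N$, use $r\le 1$, and use $(\beta S-\gamma)^2\le\beta^2S^2+\gamma^2\le\beta^2N^2+\gamma^2$ since $S\ge 0$, $\gamma>0$.

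For $(iii)$, differentiate the controlled system \eqref{SIRcontrol} in $x$ (resp.\ $y$): the vector $\zeta(t):=\partial_x\Phi^r(x,y,t)$ (resp.\ $\partial_y\Phi^r$) solves the linear ODE $\dot\zeta=A(t)\zeta$ with $\zeta(0)=\begin{pmatrix}1\\0\end{pmatrix}$ (resp.\ $\begin{pmatrix}0\\1\end{pmatrix}$) and
\[
A(t)=\begin{pmatrix}-\beta I-r & -\beta S\\ \beta I & \beta S-\gamma\end{pmatrix}_{\!\!(S,I)=(\Phi_1^r,\Phi_2^r)} .
\]
Using $(i)$, $r\le1$, $S\ge0$, $\gamma>0$, the Frobenius norm satisfies
$\|A(t)\|^2\le(\beta N+1)^2+2\beta^2N^2+(\beta^2N^2+\gamma^2)=4\beta^2N^2+2\beta N+1+\gamma^2$,
and the cross term is absorbed by $2\beta N\le\beta^2N^2+1$, so $\|A(t)\|^2\le 6\beta^2N^2+2\gamma^2+2=C^2$. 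Applying the preceding Gr\"onwall lemma with $c=C$ and $f\equiv0$ gives $\|\zeta(t)\|^2\le e^{(2C+1)t}\|\zeta(0)\|^2=e^{(2C+1)t}$, hence $|\partial_x\Phi_2^r|,|\partial_y\Phi_2^r|\le e^{(C+1/2)t}$ (and likewise for the first components of the flow). For $(v)$ I would simply differentiate $\partial_t\Phi_2^r=\beta\Phi_1^r\Phi_2^r-\gamma\Phi_2^r$ in $x$ (resp.\ $y$), obtaining $\partial_x\partial_t\Phi_2^r=\beta(\partial_x\Phi_1^r)\Phi_2^r+(\beta\Phi_1^r-\gamma)\partial_x\Phi_2^r$, and bound with $(i)$ and $(iii)$: $|\partial_x\partial_t\Phi_2^r|\le\beta N e^{(C+1/2)t}+(\beta N+\gamma)e^{(C+1/2)t}=(2\beta N+\gamma)e^{(C+1/2)t}$.

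Finally, for $(vi)$, differentiate the variational equation once more: $\eta(t):=\partial_x^2\Phi^r$ (or $\partial_y^2\Phi^r$, or $\partial_x\partial_y\Phi^r$) solves $\dot\eta=A(t)\eta+g(t)$ with $\eta(0)=0$, where $g(t)$ is the quadratic forcing term built from the Hessians of the vector field in $(S,I)$. The only nonzero second $(S,I)$-derivatives of $(-\beta SI-rS,\ \beta SI-\gamma I)$ are the mixed ones, equal to $\mp\beta$, so $g(t)=\bigl(\mp\beta(\cdots),\ \pm\beta(\cdots)\bigr)$ and one checks $\|g(t)\|\le 2\sqrt2\,\beta\,\|\partial_x\Phi^r(t)\|\,\|\partial_y\Phi^r(t)\|\le 2\sqrt2\,\beta\,e^{(2C+1)t}$ by $(iii)$. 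Feeding this into the Gr\"onwall lemma with $\|\eta(0)\|=0$,
\[
\|\eta(t)\|^2\le e^{(2C+1)t}\int_0^t e^{-(2C+1)s}\,8\beta^2 e^{2(2C+1)s}\,ds\le\frac{8\beta^2}{2C+1}\,e^{2(2C+1)t},
\]
so $\|\eta(t)\|\le\frac{2\sqrt2\,\beta}{\sqrt{2C+1}}e^{(2C+1)t}\le\frac{2\beta}{\sqrt C}e^{(2C+1)t}$, the last step using $2C\le 2C+1$. None of this is deep; the main (minor) obstacle is purely a matter of bookkeeping: pinning down the constant in the Frobenius bound so that $\|A(t)\|\le C$ holds exactly as stated, and correctly assembling the inhomogeneous term in the second variational equation so that it is controlled by the first-order bound $(iii)$.
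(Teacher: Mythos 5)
Your proof is correct and follows essentially the same route as the paper: the a priori bounds from Lemma \ref{ExistenceLemmaSIRcontrol} for $(i)$, $(ii)$, $(iv)$, the first variational equation with the Frobenius-norm estimate $\|A(t)\|\le C$ and the Gr\"onwall lemma for $(iii)$, differentiation of the $I$-equation for $(v)$, and the second variational equation with quadratic forcing (bounded via $(iii)$) plus Gr\"onwall for $(vi)$. The only cosmetic differences are that your bounds on $\|A(t)\|^2$ and on the forcing term are organized slightly differently from the paper's, but they land on the same constants.
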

\begin{proof}
Assertions $(i),(ii)$ and $(iv)$ follow from the proof of Lemma \ref{ExistenceLemmaSIRcontrol}. Moreover, 
\be\label{propertyzero}
0\le \Phi^r_1(x,y,t)\le N
\ee
holds for $(x,y)\in T_{0, N}$, as well. Differentiating \eqref{SIRcontrol} with respect to $x$ gives
\be\label{linearizationfirstinx}
\partial_t\left(\begin{array}{c}
 \partial_x\Phi^r_1(x,y,t)  \\
 \partial_x\Phi^r_2(x,y,t)  
\end{array}\right)
=
A(t)\left(\begin{array}{c}
 \partial_x\Phi^r_1(x,y,t)  \\
 \partial_x\Phi^r_2(x,y,t)  
\end{array}\right),
\ee
with 
$$
A(t)=\left(\begin{array}{cc}
-\beta \Phi^r_2(x,y,t) -r(t) & -\beta \Phi^r_1(x,y,t) \\
\beta \Phi^r_2(x,y,t) & \beta \Phi^r_1(x,y,t) -\gamma
\end{array}\right).
$$ 
As
\begin{align}
\|A(t)\|^2&=(\beta \Phi^r_2(x,y,t) +r(t))^2+(\beta \Phi^r_1(x,y,t) )^2+(\beta \Phi^r_2(x,y,t))^2 +(\beta \Phi^r_1(x,y,t) -\gamma)^2\\
&\le 2(\beta^2N^2+1)+\beta^2N^2+\beta^2N^2+2(\beta^2N^2+\gamma^2)\\
&=6\beta^2 N^2+2\gamma^2+2\\
&=C^2
\end{align}
and $\|\partial_x\Phi^r(x,y,0)\|=1$, the previous lemma with $z(t)=\partial_x\Phi^r(x,y,t)$ implies
\be\label{gradxPhir2bound}
\|\partial_x\Phi^r(x,y,t)\|\le e^{(C+1/2)t}.
\ee
We can derive the same upper bound for $\|\partial_y\Phi^r(x,y,t)\|$, so assertion $(iii)$ follows. 

\par The second equation in \eqref{linearizationfirstinx} is 
$$
\partial_t\partial_x\Phi^r_2(x,y,t)=\beta (\partial_x\Phi^r_1(x,y,t)\Phi^r_2(x,y,t)+\Phi^r_1(x,y,t)\partial_x\Phi^r_2(x,y,t)) -\gamma \partial_x\Phi^r_2(x,y,t).
$$
Using $(i)$ and \eqref{gradxPhir2bound} leads to 
$$
|\partial_t\partial_x\Phi^r_2(x,y,t)|\le  e^{(C+1/2) t}(2\beta N+\gamma).
$$
This method also leads to the same bound for $|\partial_t\partial_y\Phi^r_2(x,y,t)|$.  This proves $(v)$.

\par As for $(vi)$, we will prove the estimate for $\partial^2_x\Phi^r_2(x,y,t)$ as the other cases can be handled similarly.  Upon differentiating 
 \eqref{linearizationfirstinx} with respect to $x$, 
 \be\label{Secondlinearizationinx}
\partial_t\left(\begin{array}{c}
 \partial^2_x\Phi^r_1(x,y,t)  \\
 \partial^2_x\Phi^r_2(x,y,t)  
\end{array}\right)
=
A(t)\left(\begin{array}{c}
 \partial^2_x\Phi^r_1(x,y,t)  \\
 \partial^2_x\Phi^r_2(x,y,t)  
\end{array}\right)+2\beta\left(\begin{array}{c}
 -\partial_x\Phi^r_1(x,y,t)\partial_x\Phi^r_2(x,y,t)  \\
\partial_x\Phi^r_1(x,y,t)\partial_x\Phi^r_2(x,y,t) 
\end{array}\right).
\ee
Note that $\partial^2_x\Phi^r_1(x,y,0) =\partial^2_x\Phi^r_2(x,y,0) =0$ and 
$$
\left\|2\beta\left(\begin{array}{c}
 -\partial_x\Phi^r_1(x,y,t)\partial_x\Phi^r_2(x,y,t)  \\
\partial_x\Phi^r_1(x,y,t)\partial_x\Phi^r_2(x,y,t) 
\end{array}\right)\right\|^2\le8\beta^2 e^{4(C+1/2)t}
$$
by \eqref{gradxPhir2bound}. The previous lemma then implies 
\begin{align}
| \partial^2_x\Phi^r_2(x,y,t) |^2&\le  e^{(2C+1)t}8\beta^2\int^t_0e^{-(2C+1)s} e^{(4C+2)s}ds\\
&\le  e^{(2C+1)t}8\beta^2\int^t_0e^{(2C+1)s}ds\\
&\le  e^{(2C+1)t}8\beta^2\frac{1}{2C+1}(e^{(2C+1)t}-1)\\
&\le e^{(4C+2)t}\frac{4\beta^2}{C}.
\end{align}
\end{proof}
When we differentiate \eqref{impliciturequation}, it will be crucial that 
$$
\partial_t\Phi^r_2(x,y,t)= (\beta\Phi^r_1(x,y,t) -\gamma) \Phi^r_2(x,y,t)< 0
$$
when $t=u^r(x,y)$. We'll show that this quantity is uniformly bounded away from 0 for $(x,y)\in T_{N,\delta}$.
\begin{lem}
Suppose $N,\delta>0$ satisfy $N>\mu+2\delta$. There is 
$$
\epsilon >0
$$
such that 
\be\label{NondegdotIarr}
\gamma-\beta\Phi^r_1(x,y,u^r(x,y)) \ge \epsilon
\ee
for each $r\in {\cal A}$ and $(x,y)\in T_{N,\delta}$.
\end{lem}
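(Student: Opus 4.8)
The plan is to argue by contradiction, exploiting the compactness of $T_{N,\delta}$ and the weak* compactness of $\mathcal{A}$ together with the continuity results already established, and then to invoke Corollary \ref{Sulessthan} to reach an absurdity. Suppose the claim is false. Then there are sequences $(x^k,y^k)\in T_{N,\delta}$ and $r^k\in\mathcal{A}$ for which
$$
\gamma-\beta\,\Phi_1^{r^k}\bigl(x^k,y^k,u^{r^k}(x^k,y^k)\bigr)\longrightarrow \ell
$$
for some $\ell\le 0$ as $k\to\infty$. Since every term on the left is strictly positive by Corollary \ref{Sulessthan} (using $x^k\ge\delta>0$ and $y^k\ge\mu+\delta>\mu$), we must have $\ell=0$. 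Passing to a subsequence, compactness of $T_{N,\delta}$ gives $(x^k,y^k)\to(x^\infty,y^\infty)\in T_{N,\delta}$, and weak* compactness of $\mathcal{A}$ gives $r^k\to r^\infty$ weak* for some $r^\infty\in\mathcal{A}$.

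Next I would transfer all the relevant convergences to the limit. By Proposition \ref{CompactnessProp}, the solutions $S^{r^k},I^{r^k}$ with these initial data converge locally uniformly to $S^{r^\infty},I^{r^\infty}$; by the corollary yielding \eqref{Convergenceurkayxkayykay}, $u^{r^k}(x^k,y^k)\to u^{r^\infty}(x^\infty,y^\infty)=:u^\infty$. It remains to see that
$$
\Phi_1^{r^k}\bigl(x^k,y^k,u^{r^k}(x^k,y^k)\bigr)=S^{r^k}\bigl(u^{r^k}(x^k,y^k)\bigr)\longrightarrow S^{r^\infty}(u^\infty).
$$
This is the one place requiring care, since the evaluation times $u^{r^k}(x^k,y^k)$ move with $k$. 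However these times are uniformly bounded (by $N/(\mu\gamma)$, from \eqref{uarrUpperBound} and $x+y\le N$ on $T_{N,\delta}$), and on any fixed bounded time interval the $S^{r^k}$ are uniformly Lipschitz (with constant $\beta N^2+N$, by Lemma \ref{ExistenceLemmaSIRcontrol}) and converge uniformly to $S^{r^\infty}$. Splitting
$$
\bigl|S^{r^k}(u^{r^k})-S^{r^\infty}(u^\infty)\bigr|\le \bigl|S^{r^k}(u^{r^k})-S^{r^\infty}(u^{r^k})\bigr|+\bigl|S^{r^\infty}(u^{r^k})-S^{r^\infty}(u^\infty)\bigr|,
$$
the first term goes to $0$ by the uniform convergence and the second by Lipschitz continuity of $S^{r^\infty}$ together with $u^{r^k}\to u^\infty$; this is exactly the same maneuver used in the proof of \eqref{Convergenceurkayxkayykay} to pass the limit through $I^{r^k}(u^{r^k})=\mu$.

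Finally, combining the two displays gives $\gamma-\beta S^{r^\infty}(u^\infty)=0$. But $x^\infty\ge\delta>0$, $y^\infty\ge\mu+\delta>\mu$, and $u^\infty=u^{r^\infty}(x^\infty,y^\infty)=\min\{t\ge 0:I^{r^\infty}(t)=\mu\}$, so Corollary \ref{Sulessthan} applies to $r^\infty$ with these initial data and forces $\beta S^{r^\infty}(u^\infty)<\gamma$, a contradiction. Hence
$$
\epsilon:=\inf\bigl\{\gamma-\beta\,\Phi_1^r(x,y,u^r(x,y)):r\in\mathcal{A},\ (x,y)\in T_{N,\delta}\bigr\}
$$
is strictly positive, which is \eqref{NondegdotIarr}. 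The only genuine obstacle is the bookkeeping in passing the limit through the varying stopping times in the second paragraph, and that is handled by the splitting above exactly as in the earlier continuity argument; every other step is a direct citation of a result proved above.
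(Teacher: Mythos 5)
Your proposal is correct and follows essentially the same route as the paper: a contradiction argument using compactness of $T_{N,\delta}$, weak* compactness of ${\cal A}$, Proposition \ref{CompactnessProp} together with \eqref{Convergenceurkayxkayykay} to pass to the limit, and then Corollary \ref{Sulessthan} applied to the limiting data to reach the contradiction. Your explicit splitting argument for passing the limit through the varying hitting times merely spells out what the paper leaves implicit when it cites those convergence results.
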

\begin{proof}
If this assertion is false, for each $k\in \N$ there would be $(x^k,y^k)\in T_{N,\delta}$ and $r^k\in {\cal A}$ such that 
$$
\gamma-\beta\Phi^{r^k}_1(x^k,y^k,u^{r^k}(x^k,y^k))<\frac{1}{k}.
$$
Passing to subsequences if necessary, we may suppose that $(x^k,y^k)\rightarrow (x,y) \in T_{N,\delta}$ and $r^k\rightarrow r$ weak$^*$ in ${\cal A}$.   According to Proposition \ref{CompactnessProp} and \eqref{Convergenceurkayxkayykay}, 
$$
\gamma-\beta\Phi^{r}_1(x,y,u^{r}(x,y))\le 0.
$$
However, this contradicts Lemma \ref{Sulessthan} as $y\ge \mu+2\delta>\mu$.
\end{proof}
\begin{cor}\label{uarrSmoothCor}
For each $r\in {\cal A}$, $u^r$ is continuously differentiable on $(0,\infty)\times(\mu,\infty)$.  Moreover, for each $N,\delta>0$ with $N>\mu+2\delta$, there is a constant 
$B$ such that 
\be\label{LipschitzBounduArr}
|\partial_xu^r(x,y)|,|\partial_yu^r(x,y)|\le B
\ee
for $(x,y)\in T_{N,\delta}$ and $r\in {\cal A}$.
\end{cor}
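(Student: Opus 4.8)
The plan is to derive both assertions from the implicit equation \eqref{impliciturequation}, namely $\Phi_2^r(x,y,u^r(x,y))=\mu$, via the implicit function theorem. First I would fix $r\in{\cal A}$ and recall that $\Phi_2^r$ is continuously differentiable in $(x,y,t)$ and that $u^r$ is continuous. The key structural fact is that at a crossing point the $t$-derivative of $\Phi_2^r$ does not vanish: by Corollary~\ref{Sulessthan},
$$
\partial_t\Phi_2^r(x,y,u^r(x,y))=\bigl(\beta\Phi_1^r(x,y,u^r(x,y))-\gamma\bigr)\mu<0
$$
for every $x>0$ and $y>\mu$. Since $u^r$ is continuous, $u^r(x,y)$ is the locally unique solution $t$ of $\Phi_2^r(x,y,t)=\mu$ near each base point, so the implicit function theorem yields $u^r\in C^1\bigl((0,\infty)\times(\mu,\infty)\bigr)$ together with
$$
\partial_xu^r=-\frac{\partial_x\Phi_2^r}{\partial_t\Phi_2^r},\qquad \partial_yu^r=-\frac{\partial_y\Phi_2^r}{\partial_t\Phi_2^r},
$$
all quantities on the right evaluated at $(x,y,u^r(x,y))$. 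This settles the continuous differentiability.

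For the uniform Lipschitz bound on $T_{N,\delta}$, I would estimate numerator and denominator in these formulas separately. The denominator is bounded away from $0$ uniformly: the bound \eqref{NondegdotIarr} supplies an $\epsilon>0$, depending only on $N,\delta$ (and $\beta,\gamma,\mu$), with $\gamma-\beta\Phi_1^r(x,y,u^r(x,y))\ge\epsilon$ for all $(x,y)\in T_{N,\delta}$ and all $r\in{\cal A}$, hence $|\partial_t\Phi_2^r(x,y,u^r(x,y))|\ge\epsilon\mu$. For the numerator, Lemma~\ref{DerivativeBoundsPhi2}$(iii)$ gives $|\partial_x\Phi_2^r(x,y,t)|,\,|\partial_y\Phi_2^r(x,y,t)|\le e^{(C+1/2)t}$ with $C=\sqrt{6\beta^2N^2+2\gamma^2+2}$, while \eqref{uarrUpperBound} bounds the relevant time: $u^r(x,y)\le(x+y)/(\mu\gamma)\le N/(\mu\gamma)$ on $T_{N,\delta}$. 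Combining the two estimates yields
$$
|\partial_xu^r(x,y)|,\;|\partial_yu^r(x,y)|\le\frac{e^{(C+1/2)N/(\mu\gamma)}}{\epsilon\mu}=:B
$$
for all $(x,y)\in T_{N,\delta}$ and all $r\in{\cal A}$, which is \eqref{LipschitzBounduArr}.

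The main obstacle is a bookkeeping one: Lemma~\ref{DerivativeBoundsPhi2} is stated for smooth $r$, whereas here $r\in{\cal A}$ is only bounded and measurable. The cleanest fix is to observe that the bound $(iii)$ is obtained by applying Gr\"onwall's lemma to the variational equation \eqref{linearizationfirstinx}, whose coefficient matrix $A(t)$ has Frobenius norm bounded by $C$ for every $r\in{\cal A}$; smoothness of $r$ plays no role there, so \eqref{gradxPhir2bound} persists verbatim for all admissible controls. Alternatively one can argue by approximation: choose smooth $r^k\to r$ weak$^*$, apply the already-proved smooth case to get the uniform bound $B$ on the gradients of $u^{r^k}$, and pass to the limit using $u^{r^k}\to u^r$ from \eqref{Convergenceurkayxkayykay} to conclude that $u^r$ is Lipschitz on $T_{N,\delta}$ with the same constant $B$. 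Either way, once this point is dispatched the corollary follows from the implicit function theorem and the two uniform estimates above.
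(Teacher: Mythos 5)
Your proof follows essentially the same route as the paper: apply the implicit function theorem to $\Phi_2^r(x,y,u^r(x,y))=\mu$, using Corollary \ref{Sulessthan} and \eqref{NondegdotIarr} to keep $\partial_t\Phi_2^r$ strictly negative at the crossing time, then bound $|\partial_xu^r|,|\partial_yu^r|$ via Lemma \ref{DerivativeBoundsPhi2}$(iii)$ combined with $u^r\le N/(\mu\gamma)$ from \eqref{uarrUpperBound}. Your extra remark that the first-order Gr\"onwall estimate in $(iii)$ requires no smoothness of $r$ (or can be recovered by weak$^*$ approximation of $r$) cleanly handles a hypothesis the paper applies silently to general $r\in{\cal A}$, and your denominator $\epsilon\mu$ is in fact the precise one, where the paper's displayed bound writes $1/\epsilon$.
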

\begin{proof}
Recall that $\Phi^r_2$ is continuously differentiable for all $r\in {\cal A}$.  In view of the previous lemma,
$$
\partial_t\Phi^r_2(x,y,u^{r}(x,y))<0
$$ 
for $x>0$ and $y>\mu$.  
Applying the implicit function theorem to equation \eqref{impliciturequation} gives that $u^r$ is continuously differentiable. 

\par Differentiating \eqref{impliciturequation} with respect to $x$ gives 
\be\label{impliciturequation2} 
\partial_x\Phi^r_2(x,y,u^{r}(x,y))+\partial_t\Phi^r_2(x,y,u^{r}(x,y))\partial_xu^{r}(x,y)=0.
\ee
We can then use the previous lemma, part $(iii)$ of Lemma \ref{DerivativeBoundsPhi2} and \eqref{uarrUpperBound} to find 
\be
|\partial_xu^{r}(x,y)|=\frac{|\partial_x\Phi^r_2(x,y,u^{r}(x,y))|}{|\partial_t\Phi^r_2(x,y,u^{r}(x,y))|}\le\frac{1}{\epsilon}e^{ (C+1/2) u^r(x,y)}\le\frac{1}{\epsilon}e^{ (C+1/2)N/\mu\gamma}
\ee
for $(x,y)\in T_{N,\delta}$. Here $\epsilon$ is the constant in \eqref{NondegdotIarr} and $C=\sqrt{6\beta^2 N^2+2\gamma^2+2}.$  The same upper bound also holds for $|\partial_yu^{r}(x,y)|$.
\end{proof}
\begin{rem}\label{Extenduxuy}
Using \eqref{impliciturequation2} and the corresponding equation obtained by differentiating \eqref{impliciturequation} with respect to $y$, we can extend $\partial_xu^r$ and $\partial_yu^r$ continuously to the interval
$$
0<x<\frac{\gamma}{\beta}\quad \text{and} \quad y=\mu.
$$
In particular, 
\be 
\partial_xu^{r}(x,\mu)=0\quad\text{and}\quad \partial_yu^{r}(x,\mu)=\frac{1}{(\gamma-\beta x)\mu}
\ee
for any such pair $(x,y)$.
\end{rem}
In view of \eqref{LipschitzBounduArr} and the fact that $T_{N,\delta}$ is convex, 
$$
|u^r(x_1,y_1)-u^r(x_2,y_2)|\le \sqrt{2}B(|x_1-y_1|+|x_2-y_2|)
$$
for all $r\in {\cal A}$ and $(x_1,y_1),(x_2,y_2)\in T_{N,\delta}$.  As a result, 
$$
|u(x_1,y_1)-u(x_2,y_2)|\le \sqrt{2} B(|x_1-y_1|+|x_2-y_2|)
$$
for $(x_1,y_1),(x_2,y_2)\in T_{N,\delta}$.  In particular, the value function is Lipschitz continuous on $T_{N,\delta}$ for any $N>\mu+2\delta$. 
By Rademacher's theorem, $u$ is differentiable almost everywhere in $(0,\infty)\times(\mu,\infty)$.  

\par Now we will explain how to bound the second derivatives of $u^r$ when $r$ is smooth.   Our method involves differentiating equation \eqref{impliciturequation} twice.

\begin{prop}
Suppose $N,\delta>0$ satisfy $N>\mu+2\delta$. There is a constant $D$ such that
\be\label{SecondDerivativeBounds}
|\partial^2_xu^r(x,y)|,|\partial^2_yu^r(x,y)|,|\partial_x\partial_yu^r(x,y)|\le D
\ee
for each $(x,y)\in T_{N,\delta}$ and each smooth $r\in {\cal A}$. 
\end{prop}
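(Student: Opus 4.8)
The plan is to differentiate the implicit equation $\Phi_2^r(x,y,u^r(x,y))=\mu$ twice and solve for the second derivatives of $u^r$ in terms of derivatives of $\Phi_2^r$, using the uniform bounds already established in Lemma \ref{DerivativeBoundsPhi2} together with the nondegeneracy estimate \eqref{NondegdotIarr}. Concretely, differentiating \eqref{impliciturequation2} once more with respect to $x$ yields
\begin{align}
0&=\partial_x^2\Phi_2^r+2\partial_x\partial_t\Phi_2^r\,\partial_xu^r+\partial_t^2\Phi_2^r\,(\partial_xu^r)^2+\partial_t\Phi_2^r\,\partial_x^2u^r,
\end{align}
where all derivatives of $\Phi_2^r$ are evaluated at $(x,y,u^r(x,y))$. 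Solving,
\begin{align}
\partial_x^2u^r(x,y)=-\frac{\partial_x^2\Phi_2^r+2\partial_x\partial_t\Phi_2^r\,\partial_xu^r+\partial_t^2\Phi_2^r\,(\partial_xu^r)^2}{\partial_t\Phi_2^r}.
\end{align}
The analogous identities in $y$ and in the mixed variable $x,y$ are obtained the same way (the mixed one comes from differentiating \eqref{impliciturequation2} with respect to $y$).

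The key steps, in order, are: (1) write down these three second-order implicit identities; (2) bound each numerator term using parts $(ii)$, $(iv)$, $(v)$, $(vi)$ of Lemma \ref{DerivativeBoundsPhi2} — which apply with $N$ replaced by the $N$ of $T_{N,\delta}$ since on $T_{N,\delta}$ we have $x+y\le N$ — together with the first-derivative bound \eqref{LipschitzBounduArr} for $\partial_xu^r,\partial_yu^r$ and the time bound $u^r(x,y)\le N/(\mu\gamma)$ from \eqref{uarrUpperBound}; (3) bound the denominator below: $|\partial_t\Phi_2^r(x,y,u^r(x,y))|=(\gamma-\beta\Phi_1^r(x,y,u^r(x,y)))\,\mu\ge\epsilon\mu$ by \eqref{NondegdotIarr}, where $\epsilon$ depends only on $N,\delta$; (4) combine to produce a single constant $D=D(N,\delta,\beta,\gamma)$, uniform over smooth $r\in\mathcal A$ and over $(x,y)\in T_{N,\delta}$. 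Since all the ingredient bounds are uniform in $r$, so is $D$.

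The only genuine subtlety — and the step I expect to need the most care — is the evaluation point: Lemma \ref{DerivativeBoundsPhi2} gives bounds for $(x,y)\in T_{0,N}$ and $t\ge 0$ that grow exponentially in $t$, so I must feed in the uniform time bound $t=u^r(x,y)\le N/(\mu\gamma)$ to convert these into honest constants, and I must check that $(x,y)\in T_{N,\delta}$ indeed implies $(x,y)\in T_{0,N}$ (it does: $x+y\le N$ and $x,y\ge 0$). A secondary point is that one should verify $T_{N,\delta}$-membership is preserved well enough to apply \eqref{NondegdotIarr}, which requires $y\ge\mu+\delta>\mu$ and $x\ge\delta>0$ — exactly the defining inequalities of $T_{N,\delta}$. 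Everything else is the routine algebra of collecting terms, so I would state the three identities, assert the bounds term-by-term citing the relevant items, and conclude with the explicit (if unilluminating) formula for $D$.

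\begin{rem}
Once \eqref{SecondDerivativeBounds} is established for smooth $r$, one passes to general $r\in\mathcal A$ by approximating $r$ weak$^*$ by smooth controls and using the continuity $r\mapsto u^r$ from \eqref{Convergenceurkayxkayykay}; taking a minimum over $r$ and invoking a standard characterization of semiconcavity via difference quotients then yields Theorem \ref{thm1}. That reduction is presumably carried out in the sequel to this proposition.
\end{rem}
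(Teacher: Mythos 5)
Your proposal is correct and follows essentially the same route as the paper: differentiate the implicit relation $\Phi_2^r(x,y,u^r(x,y))=\mu$ twice, bound the resulting numerator terms via Lemma \ref{DerivativeBoundsPhi2} together with \eqref{LipschitzBounduArr} and the time bound $u^r\le N/(\mu\gamma)$ from \eqref{uarrUpperBound}, and bound the denominator away from zero via \eqref{NondegdotIarr}. The only (harmless) cosmetic difference is that you merge the two mixed-derivative terms into $2\partial_x\partial_t\Phi_2^r\,\partial_xu^r$, which is legitimate for smooth $r$.
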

\begin{proof}
We will establish the bound for $|\partial^2_xu^r(x,y)|$ as the other bounds can be similarly achieved.   Differentiating \eqref{impliciturequation2} 
with respect to $x$ gives 
\begin{align}
&0=\partial^2_x\Phi^r_2(x,y,u^{r}(x,y))+\partial_t\partial_x\Phi^r_2(x,y,u^{r}(x,y))\partial_xu^{r}(x,y)\\
&\quad+(\partial_x\partial_t\Phi^r_2(x,y,u^{r}(x,y))+\partial^2_t\Phi^r_2(x,y,u^{r}(x,y))\partial_xu^{r}(x,y) )\partial_xu^{r}(x,y)\\
&\quad+\partial_t\Phi^r_2(x,y,u^{r}(x,y))\partial^2_xu^{r}(x,y).
\end{align}
In view of  \eqref{uarrUpperBound}, $u^r(x,y)\le N/\mu\gamma$.  Consequently, 
$$
|\partial^2_x\Phi^r_2(x,y,u^{r}(x,y))|, |\partial_t\partial_x\Phi^r_2(x,y,u^{r}(x,y))|,|\partial^2_t\Phi^r_2(x,y,u^{r}(x,y))|
$$
are each uniformly bounded for all $(x,y)\in T_{N,\delta}$ independently of $r$. This follows from $(iv), (v)$, and $(vi)$ in  Lemma \ref{DerivativeBoundsPhi2}.  We can then solve the equation above for $\partial^2_xu^{r}(x,y)$ and use \eqref{NondegdotIarr} and \eqref{LipschitzBounduArr} to find $D$ as asserted. 
\end{proof}
\begin{cor}\label{uarrQuasiconcave}
Suppose $N,\delta>0$ satisfy $N>\mu+2\delta$.  Then there is a constant $L$ such 
$$
(x,y)\mapsto u^r(x,y)-\frac{L}{2}(x^2+y^2)
$$
is concave in $T_{N,\delta}$ for each $r\in {\cal A}$.
\end{cor}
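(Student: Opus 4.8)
The plan is to reduce to the case of smooth controls, where the preceding proposition applies directly, and then remove the smoothness hypothesis by a mollification argument. First I would record the elementary linear-algebra fact that a symmetric $2\times 2$ matrix $M$ whose entries are all bounded in absolute value by $D$ satisfies $v\cdot Mv\le 2D|v|^2$ for all $v\in\R^2$ (for instance by Gershgorin, the eigenvalues lie in $[-2D,2D]$). Applying this to $M=D^2u^r(x,y)$ together with the bound \eqref{SecondDerivativeBounds}, and setting $L:=2D$, I get $v\cdot D^2u^r(x,y)v\le L|v|^2$ at every $(x,y)\in T_{N,\delta}$ for every smooth $r\in{\cal A}$. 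Since $T_{N,\delta}$ is convex and $u^r$ is twice continuously differentiable there when $r$ is smooth (by the implicit function theorem applied to \eqref{impliciturequation}, using \eqref{NondegdotIarr}), this says precisely that $D^2\bigl(u^r-\tfrac{L}{2}(x^2+y^2)\bigr)\le 0$ on $T_{N,\delta}$, so $u^r-\tfrac{L}{2}(x^2+y^2)$ is concave on $T_{N,\delta}$, with $L$ independent of the smooth control $r$.

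Next I would pass from smooth to arbitrary $r\in{\cal A}$ by approximation. Extend $r$ evenly across $0$ and set $r^k:=\eta_k * r$ for a standard mollifier $\eta_k$; then each $r^k$ is smooth, satisfies $0\le r^k\le 1$ (since $\eta_k\ge 0$ and $\int\eta_k=1$), hence $r^k\in{\cal A}$, and $r^k\to r$ in $L^1_{\mathrm{loc}}([0,\infty))$. Combined with the uniform bound $\|r^k\|_{L^\infty}\le 1$, splitting $[0,\infty)$ into a large compact interval and its tail shows $r^k\to r$ weak$^*$ in $L^\infty([0,\infty))$. By Proposition \ref{CompactnessProp} and \eqref{Convergenceurkayxkayykay}, applied with fixed initial data $(x,y)$, we get $u^{r^k}(x,y)\to u^r(x,y)$ for each $(x,y)\in T_{N,\delta}$. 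Each $(x,y)\mapsto u^{r^k}(x,y)-\tfrac{L}{2}(x^2+y^2)$ is concave on $T_{N,\delta}$ by the first paragraph, with the \emph{same} constant $L$, and a pointwise limit of concave functions is concave; therefore $u^r-\tfrac{L}{2}(x^2+y^2)$ is concave on $T_{N,\delta}$, which is the claim.

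The only genuinely delicate point is this passage from smooth to general controls, and even it is routine given the continuity of $r\mapsto u^r$ already in hand: what is needed is merely that smooth elements of ${\cal A}$ are weak$^*$-dense, which mollification supplies. A minor technical nuisance is the behavior of the mollification near $t=0$, but extending $r$ evenly across $0$ (or just observing that only weak$^*$ convergence, not pointwise convergence, is required) disposes of it. Everything else reduces to the elementary equivalence between "Hessian bounded above by $LI$ on a convex set" and "concavity after subtracting $\tfrac{L}{2}|\cdot|^2$", together with the stability of concavity under pointwise limits.
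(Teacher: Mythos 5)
Your proposal is correct and follows essentially the same route as the paper's proof: establish the uniform Hessian bound from the preceding proposition for smooth $r$ (so that $u^r-\tfrac{L}{2}(x^2+y^2)$ is concave with $L$ independent of $r$), then approximate a general $r\in{\cal A}$ by smooth controls converging weak$^*$ and pass to the limit using \eqref{Convergenceurkayxkayykay}. The only cosmetic differences are that you convert the entrywise bound to an eigenvalue bound via Gershgorin where the paper uses the Frobenius norm, you spell out the mollification the paper delegates to a reference, and you invoke stability of concavity under pointwise limits where the paper passes to the limit in the midpoint concavity inequality.
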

\begin{proof}
First suppose $r\in {\cal A}$ is smooth. By \eqref{SecondDerivativeBounds}, we can find $L$ which is independent of $r$ such that 
$$
\left\|\left(\begin{array}{cc}
\partial^2_xu^r(x,y) & \partial_y \partial_xu^r(x,y) \\
\partial_x \partial_yu^r(x,y) & \partial^2_yu^r(x,y)
\end{array}\right)\right\|\le L
$$
for $(x,y)\in T_{N,\delta}$. It follows that the Hessian of 
$$
(x,y)\mapsto u^r(x,y)-\frac{L}{2}(x^2+y^2)
$$
is nonpositive definite in $T_{N,\delta}$, so this function is concave in $T_{N,\delta}$. 

\par Now suppose $r\in {\cal A}$ and choose a sequence of smooth  $r^k\in {\cal A}$ such that $r^k\rightarrow r$ weak$^*$. 
Such a sequence exists by standard smoothing techniques. See for example Appendix C.5 of \cite{MR2597943}. As 
$$
(x,y)\mapsto u^{r^k}(x,y)-\frac{L}{2}(x^2+y^2)
$$
is concave in $T_{N,\delta}$,
\be
u^{r^k}\left(\frac{x_1+x_2}{2},\frac{y_1+y_2}{2}\right)\ge \frac{1}{2}u^{r^k}(x_1,y_1)+\frac{1}{2}u^{r^k}(x_2,y_2)-L\left(\left(\frac{x_1-x_2}{2}\right)^2+\left(\frac{y_1-y_2}{2}\right)^2\right)
\ee
for each $(x_1,y_1), (x_2,y_2)\in T_{N,\delta}$. We can then send $k\rightarrow\infty$ in this inequality by \eqref{Convergenceurkayxkayykay} and conclude that this inequality holds for $u^r$. That is, $u^{r }(x,y)-\frac{L}{2}(x^2+y^2)$ is concave in $T_{N,\delta}$.
\end{proof}

\begin{proof}[Proof of Theorem \ref{thm1}]
Suppose $K\subset (0,\infty)\times(\mu,\infty)$ is convex and compact. Then $K\subset T_{N,\delta}$ for some $N,\delta>0$ with $N>\mu+2\delta$. Thus,
there is a constant $L$ such that  
$$
u(x,y)-\frac{L}{2}(x^2+y^2)=\inf_{r\in {\cal A}}\left(u^r(x,y)-\frac{L}{2}(x^2+y^2)\right)
$$
is necessarily concave in $K$ by Corollary \ref{uarrQuasiconcave}.  
\end{proof}

\section{Optimal switching}\label{OptSwtichSect}
In this section, we will prove Theorem \ref{thm2} and a few corollaries. Here we recall the definition of a switching time vaccination rate 
\be\label{SwitchingTimeControl2}
r_\tau(t)=
\begin{cases}
0, \quad t\in [0,\tau]\\
1,\quad t\in (\tau,\infty).
\end{cases}
\ee 
We will first need to make an elementary observation
\begin{lem}\label{restricttaulessthanuarrtau}
Suppose $x\ge 0$ and $y\ge \mu$. If 
$$
\rho:=u^{r_\tau}(x,y)<\tau,
$$
then
$$
\rho= u^{r_\rho}(x,y).
$$
\end{lem}
\begin{proof}
As $\rho<\tau$, $r_\rho(t)=r_\tau(t)$ for $t\in [0,\rho]$. It follows that $(S^{r_\rho}(t),I^{r_\rho}(t))=
(S^{r_\tau}(t),I^{r_\tau}(t))$ for $t\in [0,\rho]$.  In particular, $I^{r_\rho}(\rho)=I^{r_\tau}(\rho)=\mu$. Therefore,  
$$
u^{r_\rho}(x,y)\le \rho.
$$

\par Also note that if $y>\mu$ or if $0\le x\le \gamma/\beta$ and $y=\mu$, then $I^{r_\rho}(t)=\mu$ only has one solution $t=u^{r_\rho}(x,y)$. 
Alternatively, if $x>\gamma/\beta$ and $y=\mu$, then $I^{r_\rho}(t)=\mu$ has two solutions $t=0$ and $t=u^{r_\rho}(x,y)$. In either case, $\rho=0$ or $\rho=
u^{r_\rho}(x,y)$ and we conclude
$$
\rho\le u^{r_\rho}(x,y).
$$
\end{proof}
The main conclusion of the above lemma is that when studying the eradication times $u^{r_{\tau}}(x,y)$ we only need to consider values of $\tau$ for which $\tau\le u^{r_{\tau}}(x,y)$.
\begin{proof}[Proof of Theorem \ref{thm2}]
Let $S, I$ be the solution of the SIR system \eqref{SIR} with $S(0)=x\ge 0$ and $I(0)=y\ge \mu$.  According to \cite{MR3688684} and Theorem \ref{ExistenceTheorem}, there is $\tau\ge 0$ such that
\be\label{tauMinimizingFormula}
u(x,y)=\min_{r\in {\cal A}}u^{r}(x,y)=u^{r_{\tau}}(x,y).
\ee
By Lemma \ref{restricttaulessthanuarrtau}, we may assume $\tau\le u^{r_{\tau}}(x,y) =u(x,y)$. As a result, 
\be\label{tauMinimizingFormula2}
u(x,y)=\min_{0\le \tau\le u(x,y)}u^{r_\tau}(x,y).
\ee

\par Also note 
\begin{align}
u^{r_\tau}(x,y)&=\tau+u^{r_\tau}(S^{r_\tau}(\tau),I^{r_\tau}(\tau))\\
&=\tau+u^{r_0}(S(\tau),I(\tau))
\end{align}
for $\tau\le u(x,y)$. Here we used 
$$
\begin{cases}
(S^{r_\tau}(\tau),I^{r_\tau}(\tau))=(S(\tau),I(\tau)) \text{ and}\\
u^{r_\tau}(S^{r_\tau}(\tau),I^{r_\tau}(\tau))=u^{r_0}(S(\tau),I(\tau)),
\end{cases}
$$
which follow from the definitions of $r_\tau$ and $r_0$. 

\par In view of \eqref{tauMinimizingFormula2},
\be
u(x,y)=\min_{0\le \tau\le u(x,y)}\{\tau+u^{r_0}(S(\tau),I(\tau))\}.
\ee
Of course if $\tau>u(x,y)$, then $\tau+u^{r_0}(S(\tau),I(\tau))>u(x,y)$. Consequently, 
\be\label{tauMinimizingFormula}
u(x,y)=\min_{\tau\ge 0}\{\tau+u^{r_0}(S(\tau),I(\tau))\}.
\ee
In addition, we note that if $u(x,y)=\tau+u^{r_0}(S(\tau),I(\tau))$, then $\tau\le u(x,y)\le u^{r_\tau}(x,y)$ which in turn implies $u(x,y)=\tau+u^{r_0}(S(\tau),I(\tau))=u^{r_\tau}(x,y)$. 
\\
\par Now set 
\be\label{OptimalStoppingTimetau}
\tau^*:=\inf\{t\ge 0: u(S(t),I(t))=u^{r_0}(S(t),I(t)) \},
\ee
and choose a minimizing $\tau\ge 0$ in \eqref{tauMinimizingFormula}.  Then 
$$
u^{r_\tau}(x,y)=\tau+u^{r_0}(S(\tau),I(\tau))\ge \tau.
$$
We also have
$$
u(S(\tau),I(\tau))=u^{r_\tau}(S(\tau),I(\tau))
$$
by Proposition \ref{DPPprop}. In addition, we recall $u^{r_\tau}(S(\tau),I(\tau))=u^{r_0}(S(\tau),I(\tau))$ so that 
$$
\tau^*\le \tau<\infty.
$$

\par Moreover, we only need to consider times $t\ge 0$ in \eqref{OptimalStoppingTimetau} which are bounded above by $\tau$.  It now follows easily that the infimum in \eqref{OptimalStoppingTimetau} is attained by $t=\tau^*$. And appealing to Proposition \ref{DPPprop} once again gives
\begin{align}
u(x,y)&=u^{r_\tau}(x,y)\\
&=\tau^*+u^{r_\tau}(S(\tau^*),I(\tau^*))\\
&\ge \tau^*+u(S(\tau^*),I(\tau^*))\\
&= \tau^*+u^{r_0}(S(\tau^*),I(\tau^*)).
\end{align}
Thus $\tau^*$ is optimal. 
\end{proof}
We will make use of the fact that $u^{r_0}$ is a smooth solution of  
\be\label{urzeroPDE}
\beta xy \partial_xu^{r_0}+x\partial_xu^{r_0}+(\gamma-\beta x)y\partial_yu^{r_0}=1
\ee
in $(0,\infty)\times(\mu,\infty)$.  This follows from computing the time derivative of both sides of the identity 
$$
u^{r_0}(x,y)=t+u^{r_0}(S^{r_0}(t),I^{r_0}(t))
$$
at $t=0$; here $S^{r_0}, I^{r_0}$ is the solution of the \eqref{SIRcontrol} with $S^{r_0}(0)=x$ and $I^{r_0}(0)=y$. 
\begin{cor}\label{DerivativesurzeroCor}
Suppose $x>0$, $y>\mu$. If $u(x,y)=u^{r_0}(x,y)$, then 
$$
\partial_xu^{r_0}(x,y)\ge 0.
$$
Otherwise
\be\label{urzeroxequalzero}
\partial_xu^{r_0}(S(\tau^*),I(\tau^*))=0,
\ee
where $\tau^*$ is given by \eqref{OptimalStoppingTimetau} and $S, I$ is the solution of the SIR system \eqref{SIR} with $S(0)=x$ and $I(0)=y$.
\end{cor}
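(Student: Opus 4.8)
The plan is to dispatch the two alternatives separately, reducing the second to the first.

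\emph{The case $u(x,y)=u^{r_0}(x,y)$.} Here I would use that $r_0\in\mathcal A$, so that $u\le u^{r_0}$ on all of $[0,\infty)\times[\mu,\infty)$ and hence $u-u^{r_0}$ has a global, in particular local, maximum at $(x,y)$. Since $u^{r_0}$ is $C^1$ on $(0,\infty)\times(\mu,\infty)$ by Corollary \ref{uarrSmoothCor}, it is an admissible test function in the viscosity subsolution inequality \eqref{HJBup}; using it there and subtracting the transport identity \eqref{urzeroPDE} satisfied by $u^{r_0}$ collapses everything to $x\bigl[(\partial_xu^{r_0}(x,y))^+-\partial_xu^{r_0}(x,y)\bigr]\le 0$, and since $x>0$ and $s^+\ge s$ this says precisely that $\partial_xu^{r_0}(x,y)\ge 0$.

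\emph{The case $u(x,y)<u^{r_0}(x,y)$.} First $\tau^*>0$, since $\tau^*=0$ would put us back in the previous case. Writing $T_0:=\min\{t\ge0:I(t)=\mu\}$ for the uncontrolled flow, I would note $\tau^*\le T_0$, because at $(S(T_0),\mu)$ both $u$ and $u^{r_0}$ vanish (by \eqref{BCsegment} and $\beta S(T_0)<\gamma$, i.e.\ Corollary \ref{Sulessthan} with $r\equiv0$); and the infimum in \eqref{OptimalStoppingTimetau} is attained by continuity, so $u=u^{r_0}$ at $(S(\tau^*),I(\tau^*))$. If $\tau^*=T_0$ then $\partial_xu^{r_0}(S(\tau^*),\mu)=0$ is immediate from Remark \ref{Extenduxuy}; otherwise $(S(\tau^*),I(\tau^*))$ lies in $(0,\infty)\times(\mu,\infty)$ and the first case already gives $\partial_xu^{r_0}(S(\tau^*),I(\tau^*))\ge 0$, so only the reverse inequality remains. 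To obtain it I would differentiate $g(t):=u^{r_0}(S(t),I(t))$ along the uncontrolled flow: the chain rule together with $\dot S=-\beta SI$, $\dot I=\beta SI-\gamma I$ and \eqref{urzeroPDE} gives $g'(t)=-1+S(t)\,\partial_xu^{r_0}(S(t),I(t))$. On the other hand $r_{\tau^*}$ is an optimal control by Theorem \ref{thm2}, so Proposition \ref{DPPprop} with $r=r_{\tau^*}$ forces $u(S(t),I(t))=u(x,y)-t$ on $[0,\tau^*]$, while by the definition of $\tau^*$ the difference $g(t)-u(S(t),I(t))$ is positive on $[0,\tau^*)$ and zero at $\tau^*$. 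Hence $\phi(t):=g(t)+t-u(x,y)=g(t)-u(S(t),I(t))$ is nonnegative on $[0,\tau^*]$ and vanishes at the right endpoint, so it has nonpositive left derivative there; since $\phi'(t)=g'(t)+1=S(t)\,\partial_xu^{r_0}(S(t),I(t))$ and $S(\tau^*)>0$, this yields $\partial_xu^{r_0}(S(\tau^*),I(\tau^*))\le 0$, and the equality follows.

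I expect the main obstacle to be bookkeeping rather than ideas: one has to be sure that the arc $\{(S(t),I(t)):0\le t\le\tau^*\}$ stays inside the region where $u^{r_0}$ is $C^1$ and where \eqref{urzeroPDE} holds, which is exactly where $\tau^*\le T_0$, $I(t)>\mu$ for $t<T_0$, $S(t)>0$, $\beta S(T_0)<\gamma$, and the $C^1$ extension of $\partial u^{r_0}$ to the segment $\{0<x<\gamma/\beta,\ y=\mu\}$ from Remark \ref{Extenduxuy} get used; and one has to confirm that $g$ is genuinely $C^1$ up to $t=\tau^*$ so the one-sided derivative of $\phi$ at $\tau^*$ is legitimate. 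With those in hand the first alternative is a one-line manipulation of the subsolution inequality and the second a short consequence of the dynamic programming identity already proved.
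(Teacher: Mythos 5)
Your proof is correct, and it takes a partially different route from the paper's. The paper treats both alternatives by one first-order computation on the representation formula \eqref{tauMinimizingFormula}: the map $\tau\mapsto\tau+u^{r_0}(S(\tau),I(\tau))$ is minimized at $\tau=0$ (resp.\ at the interior minimizer $\tau^*>0$), so its one-sided derivative there is $\ge 0$ (resp.\ $=0$), and by \eqref{urzeroPDE} this derivative equals $S(\tau)\,\partial_xu^{r_0}(S(\tau),I(\tau))$. For the first alternative you instead test the viscosity subsolution inequality \eqref{HJBup} with $\varphi=u^{r_0}$ (legitimate: $u\le u^{r_0}$ with equality at $(x,y)$, and $u^{r_0}\in C^1$ by Corollary \ref{uarrSmoothCor}) and subtract \eqref{urzeroPDE}; this is exactly the maneuver the paper deploys later for \eqref{LastSupsolnInequArr}, and it has the small advantage of not invoking the bang-bang representation in that case. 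For the second alternative you end up with the same first-order condition at $\tau^*$ as the paper, but assembled differently and with more careful bookkeeping: attainment of the infimum in \eqref{OptimalStoppingTimetau}, the inequality $\partial_xu^{r_0}(S(\tau^*),I(\tau^*))\le 0$ from a left difference quotient of $\phi(t)=g(t)+t-u(x,y)\ge 0$ (using Theorem \ref{thm2} and Proposition \ref{DPPprop}), the reverse inequality from your first case, and a separate treatment of the boundary sub-case $\tau^*=T_0$, i.e.\ $I(\tau^*)=\mu$, via Remark \ref{Extenduxuy} --- a case the paper's ``computation similar to the one above'' passes over, since that computation needs $(S(\tau^*),I(\tau^*))$ to lie where $u^{r_0}$ is differentiable or to use the extension of Remark \ref{Extenduxuy}. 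Both arguments rest on the same ingredients (\eqref{urzeroPDE}, Theorem \ref{thm2}, the chain rule along the uncontrolled flow); yours is longer but tighter at the boundary.
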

\begin{proof}
If $u(x,y)=u^{r_0}(x,y)$, then $\tau=0$ is a minimizing time in \eqref{tauMinimizingFormula}.  In view of \eqref{urzeroPDE}, 
\begin{align}
0&\le\left. \frac{d}{d\tau}\left(\tau+ u^{r_0}(S(\tau),I(\tau))\right)\right|_{\tau=0}\\
&=1-\beta xy \partial_xu^{r_0}(x,y)-(\gamma-\beta x)y\partial_yu^{r_0}(x,y)\\
&=x\partial_xu^{r_0}(x,y).
\end{align}
If $u(x,y)<u^{r_0}(x,y)$, then $\tau^*>0$, and we can perform a computation similar to the one above to find \eqref{urzeroxequalzero}.
\end{proof}
\par  Combining \eqref{tauMinimizingFormula} with the dynamic programming principle \eqref{DPP} gives 
\be\label{DPP2}
u(x,y)=\min_{\tau\ge 0}\{\tau+u(S(\tau),I(\tau))\},
\ee
where $S, I$ is the solution of the SIR system \eqref{SIR} with $S(0)=x$ and $I(0)=y\ge \mu$. We will use this identity to verify the following claim. 
\begin{cor}
The value function $u$ is a viscosity solution of the PDE \eqref{HJB2} 
\be
\max\{\beta xy \partial_xu+(\gamma-\beta x)y\partial_yu-1, u-u^{r_0}\}=0
\ee
in $(0,\infty)\times(\mu,\infty)$. 
\end{cor}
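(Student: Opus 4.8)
The plan is to check the two viscosity inequalities for \eqref{HJB2} at an arbitrary point $(x_0,y_0)\in(0,\infty)\times(\mu,\infty)$, where ``viscosity solution'' is understood in the obvious analogue of Definition \ref{ViscSolnDefn} with the Hamiltonian $\max\{\beta xy\,\partial_xv+(\gamma-\beta x)y\,\partial_yv-1,\;v-u^{r_0}\}$. The three ingredients are: the pointwise bound $u\le u^{r_0}$ (valid since $r_0\in{\cal A}$); the dynamic programming identity \eqref{DPP2} along the \emph{uncontrolled} flow; and the fact, established in the course of proving Theorem \ref{thm2}, that $r_{\tau^*}$ is an optimal vaccination rate for $\tau^*$ as in \eqref{OptimalStoppingTimetau}. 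Throughout I will use that if $S,I$ solves \eqref{SIR} with $S(0)=x_0$, $I(0)=y_0$ and $g$ is $C^1$ near $(x_0,y_0)$, then $\frac{d}{dt}\big|_{t=0}g(S(t),I(t))=-\beta x_0y_0\,\partial_xg(x_0,y_0)-(\gamma-\beta x_0)y_0\,\partial_yg(x_0,y_0)$, exactly as in the computations already carried out for \eqref{HJB}.

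\emph{Subsolution.} Suppose $\varphi$ is $C^1$ and $u-\varphi$ has a local maximum at $(x_0,y_0)$. Since $u\le u^{r_0}$, we have $u(x_0,y_0)-u^{r_0}(x_0,y_0)\le 0$. For the other entry of the maximum, \eqref{DPP2} gives $u(x_0,y_0)\le t+u(S(t),I(t))$ for all $t\ge 0$, while for small $t>0$ the local maximum property gives $u(S(t),I(t))-u(x_0,y_0)\le\varphi(S(t),I(t))-\varphi(x_0,y_0)$; hence $-t\le\varphi(S(t),I(t))-\varphi(x_0,y_0)$, and dividing by $t$ and sending $t\to 0^+$ yields $\beta x_0y_0\,\partial_x\varphi(x_0,y_0)+(\gamma-\beta x_0)y_0\,\partial_y\varphi(x_0,y_0)\le 1$. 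Therefore the maximum is $\le 0$ at $(x_0,y_0)$.

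\emph{Supersolution.} Suppose $\psi$ is $C^1$ and $u-\psi$ has a local minimum at $(x_0,y_0)$. If $u(x_0,y_0)=u^{r_0}(x_0,y_0)$, the second entry of the maximum vanishes and we are done, so assume $u(x_0,y_0)<u^{r_0}(x_0,y_0)$. Then $\tau^*>0$, and $\tau^*\le u(x_0,y_0)<u^{r_0}(x_0,y_0)$ because $u(x_0,y_0)=\tau^*+u^{r_0}(S(\tau^*),I(\tau^*))$ with $u^{r_0}\ge0$; in particular $(S(t),I(t))$ stays in $(0,\infty)\times(\mu,\infty)$ for $0\le t\le\tau^*$. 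Since $r_{\tau^*}$ is optimal and vanishes on $[0,\tau^*]$, Proposition \ref{DPPprop} (applied with $r=r_{\tau^*}$, whose trajectory coincides with the uncontrolled one on $[0,\tau^*]$) gives $u(S(t),I(t))=u(x_0,y_0)-t$ for $0\le t\le\tau^*$. For small $t>0$ the local minimum property then gives $-t=u(S(t),I(t))-u(x_0,y_0)\ge\psi(S(t),I(t))-\psi(x_0,y_0)$; dividing by $t$ and sending $t\to 0^+$ yields $\beta x_0y_0\,\partial_x\psi(x_0,y_0)+(\gamma-\beta x_0)y_0\,\partial_y\psi(x_0,y_0)\ge 1$, so the maximum is $\ge 0$ at $(x_0,y_0)$.

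The only step that goes beyond a routine chain-rule computation is securing the exact identity $u(S(t),I(t))=u(x_0,y_0)-t$ for small $t>0$ in the continuation region $\{u<u^{r_0}\}$. Identity \eqref{DPP2} by itself supplies only the inequality $u(S(t),I(t))\ge u(x_0,y_0)-t$; the reverse inequality up to time $\tau^*$ genuinely requires the optimality of the bang--bang policy $r_{\tau^*}$ together with the bound $\tau^*\le u(x_0,y_0)$, both of which come from Theorem \ref{thm2} and Proposition \ref{DPPprop}. Once that identity is in hand the argument above closes, and combining the subsolution and supersolution verifications shows that $u$ is a viscosity solution of \eqref{HJB2} on $(0,\infty)\times(\mu,\infty)$.
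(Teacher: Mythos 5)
Your proof is correct and follows essentially the same route as the paper: the subsolution inequality comes from $u\le u^{r_0}$ together with the dynamic programming identity \eqref{DPP2} along the uncontrolled flow, and the supersolution inequality uses that when $u(x_0,y_0)<u^{r_0}(x_0,y_0)$ the switching time $\tau^*$ is positive, so optimality of $r_{\tau^*}$ and Proposition \ref{DPPprop} give $u(S(t),I(t))=u(x_0,y_0)-t$ on $[0,\tau^*]$. Your explicit remarks on $u\le u^{r_0}$, on $\tau^*\le u(x_0,y_0)$, and on why \eqref{DPP2} alone only yields one inequality are just slightly more detailed bookkeeping of steps the paper leaves implicit.
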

\begin{proof}
Let $x_0>0$ and $y_0>\mu$, and suppose $\varphi$ is continuously differentiable in a neighborhood of $(x_0,y_0)$ and that $u-\varphi$ has a local maximum at $(x_0,y_0)$.  Then
$$
(u-\varphi)(S(t),I(t))\le (u-\varphi)(x_0,y_0)
$$
for all $t\ge 0$ small. Here $S, I$ is the solution of the SIR system \eqref{SIR} with $S(0)=x_0$ and $I(0)=y_0$.
By \eqref{DPP2}, $u(x_0,y_0)\le t+ u(S(t),I(t))$ for all $t\ge 0$,  so that 
$$
-t\le u(S(t),I(t))-u(x_0,y_0)\le \varphi(S(t),I(t))-\varphi(x_0,y_0)
$$
for all $t\ge 0$ small. Consequently, 
\begin{align}
-1&\le \left.\frac{d}{dt} \varphi(S(t),I(t))\right|_{t=0}\\
&=-\beta x_0y_0 \partial_x\varphi(x_0,y_0)-(\gamma-\beta x_0)y_0\partial_y\varphi(x_0,y_0).
\end{align}
Therefore, 
$$
\max\{\beta x_0y_0 \partial_x\varphi(x_0,y_0)+(\gamma-\beta x_0)y_0\partial_y\varphi(x_0,y_0)-1, u(x_0,y_0)-u^{r_0}(x_0,y_0)\}\le 0.
$$

\par Now suppose $\psi$ is continuously differentiable and $u-\psi$ has a local minimum at $(x_0,y_0)$.  We claim 
\be\label{SuperSolnIneqOST}
\max\{\beta x_0y_0 \partial_x\psi(x_0,y_0)+(\gamma-\beta x_0)y_0\partial_y\psi(x_0,y_0)-1, u(x_0,y_0)-u^{r_0}(x_0,y_0)\}\ge 0.
\ee
Recall that if $u(x_0,y_0)<u^{r_0}(x_0,y_0)$, then the corresponding $\tau^*$ defined in \eqref{OptimalStoppingTimetau} is positive.  As a result, 
$$
u(x_0,y_0)=u^{r_{\tau^*}}(x_0,y_0)=t+u^{r_{\tau^*}}(S(t),I(t))=t+u(S(t),I(t))
$$
for $t\in [0, \tau^*]$. It follows that 
$$
-t= u(S(t),I(t))-u(x_0,y_0)\ge \psi(S(t),I(t))-\psi(x_0,y_0)
$$
for all $t>0$ small enough.  Therefore, 
\begin{align}
-1&\ge \left.\frac{d}{dt} \psi(S(t),I(t))\right|_{t=0}\\
&=-\beta x_0y_0 \partial_x\psi(x_0,y_0)-(\gamma-\beta x_0)y_0\partial_y\psi(x_0,y_0)
\end{align}
which implies \eqref{SuperSolnIneqOST}.
\end{proof}

\par We have established that the value function is a viscosity solution of the HJB \eqref{HJB} and the PDE \eqref{HJB2}. There is at least one implication of this fact 
which we can state in terms of the set ${\cal S}$ mentioned in our introduction
$$
{\cal S}=\{(x,y)\in(0,\infty)\times(\mu,\infty): u(x,y)=u^{r_0}(x,y) \}.
$$
\begin{cor}
For each $(x,y)$ belonging to the interior of  ${\cal S}$,
$$
\partial_xu(x,y)\ge 0.
$$
And at almost every $(x,y)\in {\cal S}^c$, 
$$
\partial_xu(x,y)\le 0.
$$
\end{cor}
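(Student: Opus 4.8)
The plan is to exploit the two PDEs the value function $u$ satisfies: the HJB equation \eqref{HJB} and the equation \eqref{HJB2}, together with the regularity from Theorem \ref{thm1}. On the interior of ${\cal S}$ the first claim is essentially Corollary \ref{DerivativesurzeroCor}: at any point $(x,y)$ in the interior of ${\cal S}$ we have $u=u^{r_0}$, so $\partial_xu(x,y)=\partial_xu^{r_0}(x,y)\ge 0$ by that corollary. (To be careful, one should note that in the interior of ${\cal S}$ the two functions $u$ and $u^{r_0}$ agree on an open set, hence have the same gradient wherever $u$ is differentiable; and $u^{r_0}$ is smooth by Corollary \ref{uarrSmoothCor}, so in fact $u$ is differentiable there and the identity of gradients holds at every interior point of ${\cal S}$.)

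For the second claim, I would argue at a point $(x,y)$ in the interior of ${\cal S}^c$ at which $u$ is twice differentiable --- this is almost every point of ${\cal S}^c$ by Theorem \ref{thm1} and Aleksandrov's theorem, and points of $\partial{\cal S}$ form a measure-zero set that can be discarded. Since $u<u^{r_0}$ there, the equation \eqref{HJB2} forces the first term of the max to vanish, so $u$ solves the \emph{linear} transport equation $\beta xy\,\partial_xu+(\gamma-\beta x)y\,\partial_yu=1$ in a neighborhood. Comparing with the HJB equation \eqref{HJB}, which reads $\beta xy\,\partial_xu+x(\partial_xu)^++(\gamma-\beta x)y\,\partial_yu=1$ and holds pointwise wherever $u$ is differentiable, subtraction gives $x(\partial_xu)^+=0$, and since $x>0$ this yields $(\partial_xu)^+=0$, i.e. $\partial_xu(x,y)\le 0$.

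The only genuinely delicate point is justifying that $u$ satisfies each PDE \emph{pointwise} (in the classical sense) at the twice-differentiability points, rather than merely in the viscosity sense. This is standard: at a point of differentiability of a continuous viscosity solution, plugging $\varphi=$ the first-order Taylor polynomial as both a test function from above and below (which is legitimate after the usual perturbation-by-a-quadratic argument near a point where the function is differentiable, or directly when the function is twice differentiable there) shows the equation holds with equality. I would phrase this as: at any $(x,y)$ where $u$ is differentiable, both \eqref{HJBup}--\eqref{HJBdown} and the analogous inequalities for \eqref{HJB2} collapse to equalities with $\partial\varphi$ replaced by $\nabla u$. Granting that, the argument above is immediate, and the measure-zero exceptional set is exactly $\partial{\cal S}$ together with the non-twice-differentiability set, both null.
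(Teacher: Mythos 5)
Your proposal is correct and follows essentially the same route as the paper: the first claim is read off from Corollary \ref{DerivativesurzeroCor} since $u=u^{r_0}$ on the interior of ${\cal S}$, and the second comes from subtracting the pointwise forms of \eqref{HJB} and \eqref{HJB2} at almost every point of ${\cal S}^c$; the paper works at Rademacher differentiability points (invoking the standard fact that a Lipschitz viscosity solution satisfies the equation wherever it is differentiable), while you work at Aleksandrov twice-differentiability points, which is a harmless variant. One small inaccuracy: your assertion that $\partial{\cal S}$ has measure zero is unjustified, but it is also unnecessary, since ${\cal S}$ is relatively closed (it is the coincidence set of two continuous functions), so ${\cal S}^c$ is open and the only exceptional set you need to discard is the null set where $u$ fails to be (twice) differentiable.
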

\begin{proof}
\par As $u$ agrees with $u^{r_0}$ in ${\cal S}$, $u$ is smooth in the interior of ${\cal S}$. It follows from Corollary \ref{DerivativesurzeroCor} that $\partial_xu(x,y)
\ge0$ for each $(x,y)$ in the interior of ${\cal S}$.

\par Since $u$ is locally Lipschitz on $(0,\infty)\times(\mu,\infty)$, $u$ is differentiable almost everywhere in ${\cal S}^c$. Let $(x,y)\in {\cal S}^c$ be a differentiability point of $u$. As $u$ is a viscosity solution of the HJB \eqref{HJB}, it is routine to check that $u$ satisfies the equation at this point. That is, 
$$
\beta xy \partial_xu(x,y)+x\partial_xu(x,y)^++(\gamma-\beta x)y\partial_yu(x,y)=1.
$$  
See Proposition 1.9 of Chapter II in \cite{MR1484411}, and Corollary 8.1 of Chapter II in \cite{MR2179357} for more on this technical point. And since $u$ is a viscosity solution of \eqref{HJB2} and $(x,y)\in {\cal S}^c$, we also have
$$
\beta xy \partial_xu(x,y)+(\gamma-\beta x)y\partial_yu(x,y)=1.
$$
Upon subtracting these equations, we find $\partial_xu(x,y)^+=0$. That is,  $\partial_xu(x,y)\le 0$.
\end{proof}

\section{Necessary conditions revisited}\label{PMPsect}
In this final section, we will relate our ideas on dynamic programming back to the necessary conditions $(i)-(vi)$ which follow from Pontryagin's maximum principle.  The link between viscosity solutions of Hamilton-Jacobi equations and Pontryagin's maximum principle was first established by Barron and Jensen \cite{MR860384}.  Our particular control problem does not exactly fit into the framework they considered, so we cannot simply quote their results. Nevertheless, the ideas presented below are inspired by their work. 

\par Our first insight is that each optimal vaccination rate $r\in {\cal A}$ is a ``feedback" control. That is, $r(t)$ depends on the value of $(S^r(t),I^r(t))$ for almost every $t\ge 0$.  In proving this assertion, we  will make use of the following basic observation. Whenever $x_0> 0$, $y_0> \mu$, and $u(x_0,y_0)=u^r(x_0,y_0)$, then
$$
u(x,y)-u^r(x,y)\le 0=u(x_0,y_0)-u^r(x_0,y_0)
$$
for each $x>0$ and $y>\mu$.  That is, $u-u^r$ has a maximum at $(x_0,y_0)$. Since $u$ is a viscosity solution and $u^r$ is continuously differentiable, 
\be\label{LastSupsolnInequArr}
\beta x_0y_0 \partial_xu^r(x_0,y_0)+x_0(\partial_xu^r(x_0,y_0))^++(\gamma-\beta x_0)y_0\partial_yu^r(x_0,y_0)\le 1.
\ee

\begin{prop}\label{OptimalSynthesisProp}
Let $x> 0$ and $y>\mu$ and choose $r\in {\cal A}$ such that $u(x,y)=u^r(x,y).$ Then 
\be\label{OptimalConditionArr}
r(t)\partial_xu^r(S^r(t),I^r(t))=\partial_xu^r(S^r(t),I^r(t))^+
\ee
for almost every $t\in [0,u(x,y)]$ and 
\begin{align}\label{HJconditionuArr}
&\beta S^{r}(t)I^{r}(t)\partial_xu^r(S^{r}(t),I^{r}(t))+S^{r}(t)\partial_xu^r(S^{r}(t),I^{r}(t))^+\\
&\hspace{1.5in} +(\gamma-\beta S^{r}(t))I^{r}(t)\partial_yu^r(S^{r}(t),I^{r}(t))=1
\end{align}
for all $t\in [0,u(x,y)]$.
\end{prop}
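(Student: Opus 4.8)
The plan is to confront the viscosity subsolution inequality \eqref{LastSupsolnInequArr} with an exact identity obtained by differentiating \eqref{babyDPP}, both read along the optimal trajectory. Write $u:=u(x,y)=u^r(x,y)$ and set $(x_t,y_t):=(S^r(t),I^r(t))$ for $t\in[0,u]$. I will assume $x>0$, so that the formula for $S^r$ in Lemma~\ref{ExistenceLemmaSIRcontrol} gives $x_t>0$ for all $t$; the degenerate case $x=0$ (where $S^r\equiv0$ and the trajectory never meets $(0,\infty)\times(\mu,\infty)$) is dealt with directly. For $t\in[0,u)$ we also have $y_t>\mu$, while $y_u=\mu$ and, by Corollary~\ref{Sulessthan}, $\beta x_u<\gamma$; thus the curve $t\mapsto(x_t,y_t)$ stays in the region where $u^r$ is $C^1$ (Corollary~\ref{uarrSmoothCor}), terminating on the segment $\{0<x<\gamma/\beta,\ y=\mu\}$ on which $\partial_xu^r$ and $\partial_yu^r$ admit continuous extensions (Remark~\ref{Extenduxuy}).

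For the ``$\le1$'' half: by Proposition~\ref{DPPprop} we have $u(x_t,y_t)=u^r(x_t,y_t)$ for every $t\in[0,u]$, so that — as was observed above from $u$ being a viscosity solution of \eqref{HJB} and $u^r$ being $C^1$ — the inequality \eqref{LastSupsolnInequArr} holds at $(x_t,y_t)$ for $t\in[0,u)$, and by the continuity noted above it persists at $t=u$:
\[
\beta x_ty_t\,\partial_xu^r(x_t,y_t)+x_t\bigl(\partial_xu^r(x_t,y_t)\bigr)^{+}+(\gamma-\beta x_t)y_t\,\partial_yu^r(x_t,y_t)\le1,\qquad t\in[0,u].
\]
For the matching identity: by \eqref{babyDPP}, $u^r(x_t,y_t)=u-t$, so this function of $t$ has derivative $-1$; on the other hand, wherever $\dot S^r(t)$ exists (a.e.) the chain rule computes its derivative as $\partial_xu^r(x_t,y_t)\dot S^r(t)+\partial_yu^r(x_t,y_t)\dot I^r(t)$. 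Equating and substituting $\dot S^r(t)=-(\beta y_t+r(t))x_t$ and $\dot I^r(t)=(\beta x_t-\gamma)y_t$ from \eqref{SIRcontrol} yields, for a.e.\ $t\in[0,u]$,
\[
\beta x_ty_t\,\partial_xu^r(x_t,y_t)+r(t)\,x_t\,\partial_xu^r(x_t,y_t)+(\gamma-\beta x_t)y_t\,\partial_yu^r(x_t,y_t)=1.
\]

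Now I would combine the two. Since $0\le r(t)\le1$ one has $r(t)a\le a^{+}$ for every $a\in\R$; taking $a=\partial_xu^r(x_t,y_t)$ shows the left-hand side of the first display is at least that of the second, hence at least $1$. Comparing with the first display forces both left-hand sides to equal $1$ for a.e.\ $t$ — the first of these being precisely \eqref{HJconditionuArr} — and forces $x_t\bigl[(\partial_xu^r(x_t,y_t))^{+}-r(t)\,\partial_xu^r(x_t,y_t)\bigr]=0$, which, as $x_t>0$, is \eqref{OptimalConditionArr}. Finally, the left-hand side of \eqref{HJconditionuArr} is continuous in $t$ on $[0,u]$ (again by continuity of $\partial_xu^r,\partial_yu^r$ up to the endpoint $(x_u,\mu)$), so the a.e.\ identity upgrades to one valid for every $t\in[0,u]$.

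The main difficulty is not any individual estimate but keeping both \eqref{LastSupsolnInequArr} and the chain-rule identity available all along the trajectory, including at the terminal point $(S^r(u),\mu)$ on $\{y=\mu\}$: this is exactly where the non-tangency $\beta S^r(u)<\gamma$ of Corollary~\ref{Sulessthan} and the boundary extension of $\partial_xu^r,\partial_yu^r$ from Remark~\ref{Extenduxuy} are essential, both to keep the curve inside the good region and to pass from the a.e.\ statement to one holding everywhere. Conceptually, the subsolution inequality has already optimized over the admissible instantaneous control value through the $(\cdot)^{+}$, the identity records the effect of the actual rate $r(t)$, and the forced equality between them is precisely the statement that $r(t)$ must realize that optimum — the bang-bang relation \eqref{OptimalConditionArr}.
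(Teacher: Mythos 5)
Your proof is correct and follows essentially the same route as the paper's: combine the viscosity subsolution inequality \eqref{LastSupsolnInequArr} along the trajectory (valid there because $u=u^r$ along it by Proposition \ref{DPPprop}) with the exact identity obtained by differentiating $u^r(S^r(t),I^r(t))=u^r(x,y)-t$, squeeze between $-1$ and $-1$ using $r(t)a\le a^+$, and upgrade \eqref{HJconditionuArr} from a.e.\ to all $t$ by continuity of $\partial_xu^r,\partial_yu^r$. Your extra care at the terminal point $(S^r(u),\mu)$ (via Corollary \ref{Sulessthan} and Remark \ref{Extenduxuy}) and your flagging of the degenerate case $x=0$ are refinements of, not departures from, the paper's argument.
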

\begin{proof}
By Proposition \ref{DPPprop},
$$
u(S^r(t),I^r(t))=u^{r}(S^{r}(t),I^{r}(t))
$$
for $t\in [0,u(x,y)]$. And in view of inequality \eqref{LastSupsolnInequArr},
\begin{align}\label{HJconditionuArr2}
&\beta S^r(t)I^r(t) \partial_xu^r(S^r(t),I^r(t))+S^r(t)(\partial_xu^r(S^r(t),I^r(t)))^+\\
&\hspace{1.5in}+(\gamma-\beta S^r(t))I^r(t)\partial_yu^r(S^r(t),I^r(t))\le 1
\end{align}
for all $t\in [0,u(x,y)]$.  Furthermore, we always have 
$$
u^{r}(S^{r}(t),I^{r}(t))=u^r(x,y)-t
$$
for $t\in [0,u(x,y)]$. Differentiating gives 
\begin{align}
-1&=\frac{d}{dt}u^{r}(S^{r}(t),I^{r}(t))\\
&=-\beta S^r(t)I^r(t) \partial_xu^r(S^r(t),I^r(t))-S^r(t)r(t)\partial_xu^r(S^r(t),I^r(t))\\
&\hspace{1.5in} -(\gamma-\beta S^r(t))I^r(t)\partial_yu^r(S^r(t),I^r(t))\\
&\ge -\beta S^r(t)I^r(t) \partial_xu^r(S^r(t),I^r(t))-S^r(t)(\partial_xu^r(S^r(t),I^r(t)))^+\\
&\hspace{1.5in}-(\gamma-\beta S^r(t))I^r(t)\partial_yu^r(S^r(t),I^r(t))\\
&\ge -1
\end{align}
for almost every $t\in [0,u(x,y)]$; the last inequality is due to \eqref{HJconditionuArr2}. We conclude \eqref{OptimalConditionArr} and \eqref{HJconditionuArr} hold 
for almost every $t\in [0,u(x,y)]$.  Since $\partial_xu^r$ and $\partial_yu^r$ are continuous, \eqref{HJconditionuArr} actually holds for all $t\in [0,u(x,y)]$.
\end{proof}

\par We will need to record a basic fact involving the adjoint equations appearing in the necessary conditions obtained via Pontryagin's maximum principle. 
\begin{lem}\label{AdjointLemma}
Let $r\in {\cal A}$, $x>0$, and $y>\mu$.  Set 
$$
P(t)=\partial_xu^r(S^r(t),I^r(t))\quad \text{and}\quad Q(t)=\partial_yu^r(S^r(t),I^r(t))
$$
where $S^r$ and $I^r$ is the solution of \eqref{SIRcontrol} with $S^r(0)=x$ and $I^r(0)=y$. Then $P, Q$ satisfy
\be\label{HODE}
\begin{cases}
\dot P(t) = (\beta I(t)+r(t))P(t)-\beta I(t)Q(t)\\
\dot Q(t) = \beta S(t)P(t)+(\gamma-\beta S(t))Q(t)
\end{cases}
\ee
for almost every $t\in [0,u^r(x,y)]$.
\end{lem}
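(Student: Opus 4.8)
The plan is to recognize $(P,Q)$ as a solution of the linear system adjoint to the variational equation of \eqref{SIRcontrol} along the trajectory, and to read off \eqref{HODE} from that. Fix $x>0$, $y>\mu$, set $u:=u^r(x,y)$, and write $(S^r,I^r)=\Phi^r(x,y,\cdot)$. Since $S^r(t)>0$ for all finite $t$ and $I^r(t)>\mu$ for $t\in[0,u)$ with $I^r(u)=\mu$ and $\beta S^r(u)<\gamma$ by Corollary \ref{Sulessthan}, Corollary \ref{uarrSmoothCor} and Remark \ref{Extenduxuy} guarantee that $P$ and $Q$ are well defined and continuous on $[0,u]$.

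First I would set up the variational equations. For each $t$ the map $(x,y)\mapsto\Phi^r(x,y,t)$ is smooth, so $\xi(t):=\partial_x\Phi^r(x,y,t)$ and $\eta(t):=\partial_y\Phi^r(x,y,t)$ are absolutely continuous in $t$ and solve $\dot\xi=A(t)\xi$, $\dot\eta=A(t)\eta$ with $\xi(0)=(1,0)^{\top}$ and $\eta(0)=(0,1)^{\top}$, where $A(t)$ is the matrix appearing in \eqref{linearizationfirstinx} (the same $A(t)$ governs the $y$-derivatives). Consequently $\Xi(t):=[\,\xi(t)\mid\eta(t)\,]$ solves $\dot\Xi=A(t)\Xi$ with $\Xi(0)=\mathrm{Id}$; by Liouville's formula $\det\Xi(t)\neq0$ for every $t$, and $\Xi$ and $\Xi^{-1}$ are locally Lipschitz.

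Next I would differentiate the dynamic programming identity \eqref{babyDPP}, namely $u^r(x,y)=t+u^r(\Phi^r(x,y,t))$, in the initial conditions. For fixed $t<u$ this identity persists for all $(x',y')$ near $(x,y)$, so applying $\partial_x$ and $\partial_y$ and the chain rule gives
\begin{align}
\langle(P(t),Q(t)),\xi(t)\rangle&=\partial_x\bigl[u^r(\Phi^r(x,y,t))\bigr]=\partial_xu^r(x,y),\\
\langle(P(t),Q(t)),\eta(t)\rangle&=\partial_y\bigl[u^r(\Phi^r(x,y,t))\bigr]=\partial_yu^r(x,y),
\end{align}
and both right-hand sides are independent of $t$ (the case $t=u$ following by continuity). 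Hence the row vector $(P(t),Q(t))\,\Xi(t)$ is constant on $[0,u]$, so $(P,Q)=(\partial_xu^r(x,y),\partial_yu^r(x,y))\,\Xi(t)^{-1}$ is absolutely continuous and, using $\tfrac{d}{dt}\Xi^{-1}=-\Xi^{-1}\dot\Xi\,\Xi^{-1}=-\Xi^{-1}A(t)$, satisfies $\tfrac{d}{dt}(P(t),Q(t))=-(P(t),Q(t))A(t)$ for a.e.\ $t\in[0,u]$. Equivalently, $\tfrac{d}{dt}(P,Q)^{\top}=-A(t)^{\top}(P,Q)^{\top}$, and a direct inspection shows that $-A(t)^{\top}$ is precisely the coefficient matrix of the system \eqref{HODE}, which completes the proof.

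The differentiability and chain-rule steps are routine; the one point requiring care is the terminal time $t=u$, where $(S^r(u),I^r(u))$ lies on $\{y=\mu\}$ so that \eqref{babyDPP} can only be differentiated for $t<u$, but this is covered by the continuity of $P,Q,\xi,\eta$ (and in any case \eqref{HODE} is only asserted a.e.). The substantive observation, which I would state carefully, is the linear-algebra fact that ``$(P,Q)$ has constant inner product with a fundamental system of the variational equation'' is equivalent to ``$(P,Q)$ solves the adjoint system'', together with the identification of that adjoint system with \eqref{HODE}.
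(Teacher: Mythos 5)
Your proposal is correct and follows essentially the same route as the paper: differentiate the identity $u^r(\Phi^r(x,y,t))=u^r(x,y)-t$ in the initial data, use that the matrix of first variations $\Xi(t)=Z(x,y,t)$ is the (nonsingular) fundamental solution of the linearized system $\dot Z=A(t)Z$, and conclude that $(P,Q)$ solves the adjoint system $\tfrac{d}{dt}(P,Q)^{\top}=-A(t)^{\top}(P,Q)^{\top}$, which is \eqref{HODE}. The only cosmetic difference is that you solve $(P(t),Q(t))\,\Xi(t)=\mathrm{const}$ for $(P,Q)$ and differentiate $\Xi^{-1}$, whereas the paper differentiates the transposed identity in $t$ and then cancels the nonsingular factor $Z(t)^{t}$.
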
 
\begin{proof}
Just as we computed \eqref{linearizationfirstinx}, we have 
\be\label{linearizationfirstinxandy}
\partial_tZ(x,y,t)
=A(x,y,t)Z(x,y,t)
\ee
for almost every $t\in [0,u^r(x,y)]$ where 
$$
Z(x,y,t)=\left(\begin{array}{cc}
 \partial_x\Phi^r_1(x,y,t)& \partial_y\Phi^r_1(x,y,t)  \\
 \partial_x\Phi^r_2(x,y,t)  & \partial_y\Phi^r_2(x,y,t)  \\
\end{array}\right)
$$
and
$$
A(x,y,t)=\left(\begin{array}{cc}
-\beta \Phi^r_2(x,y,t) -r(t) & -\beta \Phi^r_1(x,y,t) \\
\beta \Phi^r_2(x,y,t) & \beta \Phi^r_1(x,y,t) -\gamma
\end{array}\right).
$$
Taking the transpose of \eqref{linearizationfirstinxandy} leads to
\be\label{FirstZdotidentity} 
\partial_tZ(x,y,t)^t
=Z(x,y,t)^tA(x,y,t)^t.
\ee
We also note that since 
$$
Z(x,y,0)=\left(\begin{array}{cc}
1& 0  \\
0  & 1 \\
\end{array}\right),
$$
$t\mapsto Z(x,y,t)$ is the fundamental solution of the $2\times 2$ system \eqref{linearizationfirstinxandy}. In particular,  $Z(x,y,t)$ is a nonsingular matrix for each $t\ge 0$.

\par Recall the identity
$$
u^r( \Phi^r_1(x,y,t), \Phi^r_2(x,y,t))=u^r(x,y)-t
$$
for $t\in [0,u^r(x,y)]$. Differentiating with respect to $x$ and $y$ gives
$$
Z(x,y,t)^t
\left(\begin{array}{c}
\partial_xu^r( \Phi^r_1(x,y,t), \Phi^r_2(x,y,t))\\
\partial_yu^r( \Phi^r_1(x,y,t), \Phi^r_2(x,y,t))
\end{array}\right)=\left(\begin{array}{c}
\partial_xu^r(x,y)\\
\partial_yu^r(x,y)
\end{array}\right).
$$
And taking the derivative with respect to $t$ leads to
\begin{align}\label{SecondZdotIdentity}
\partial_tZ(x,y,t)^t
\left(\begin{array}{c}
\partial_xu^r( \Phi^r_1(x,y,t), \Phi^r_2(x,y,t))\\
\partial_yu^r( \Phi^r_1(x,y,t), \Phi^r_2(x,y,t))
\end{array}\right)+ Z(x,y,t)^t  \partial_t
\left(\begin{array}{c}
\partial_xu^r( \Phi^r_1(x,y,t), \Phi^r_2(x,y,t))\\
\partial_yu^r( \Phi^r_1(x,y,t), \Phi^r_2(x,y,t))
\end{array}\right)=0.
\end{align}

\par Let us now fix $x>0$ and $y>\mu$ and set $Z(t)=Z(x,y,t)$, $A(t)=A(x,y,t)$, and 
$$
\left(\begin{array}{c}
P(t)\\
Q(t)
\end{array}\right)
=\left(\begin{array}{c}
\partial_xu^r( \Phi^r_1(x,y,t), \Phi^r_2(x,y,t))\\
\partial_yu^r( \Phi^r_1(x,y,t), \Phi^r_2(x,y,t))
\end{array}\right).
$$
By \eqref{FirstZdotidentity} and \eqref{SecondZdotIdentity},
$$
Z(t)^t\frac{d}{dt}\left(\begin{array}{c}
P(t)\\
Q(t)
\end{array}\right)=-Z(t)^tA(t)^t\left(\begin{array}{c}
P(t)\\
Q(t)
\end{array}\right).
$$
for almost every $t\in [0,u^r(x,y)]$. Since $Z(t)$ is nonsingular, 
$$
\frac{d}{dt}\left(\begin{array}{c}
P(t)\\
Q(t)
\end{array}\right)=-A(t)^t\left(\begin{array}{c}
P(t)\\
Q(t)
\end{array}\right)
$$
which is \eqref{HODE}.
\end{proof}

\par We can now establish the necessary conditions coming from Pontryagin's maximum principle in terms of the derivatives of $u^r$ when $u(x,y)=u^r(x,y)$. 
\begin{proof}[Proof of Theorem \ref{thm3}]
Properties $(iii)$ and $(iv)$ were established in Proposition \ref{OptimalSynthesisProp}. As for property $(ii)$, recall that
$\partial_xu^r(x,\mu)=0$ for $x\in (0,\gamma/\beta)$ as explained in Remark \ref{Extenduxuy}.  In view of Corollary \ref{Sulessthan}, $S^r(u)\in (0,\gamma/\beta)$ so  
$P(u)=0$.  Moreover, evaluating \eqref{HJconditionuArr} at $t=u$ gives
$$
(\gamma-\beta S^{r}(u))\mu Q(u)=1.
$$
Thus, $Q(u)\neq 0$.  Finally, property $(i)$ follows from Lemma \ref{AdjointLemma}. 
\end{proof}

\noindent {\bf Acknowledgements}: This material is based upon work supported by the National Science Foundation
under Grants No. DMS-1440140 and DMS-1554130, National Security Agency under Grant No.
H98230-20-1-0015, and the Sloan Foundation under Grant No. G-2020-12602 while the
authors participated in a program hosted by the Mathematical Sciences Research Institute in Berkeley, California, during the summer of 2020.

\bibliography{SIRbib}{}

\bibliographystyle{plain}

\typeout{get arXiv to do 4 passes: Label(s) may have changed. Rerun}

\end{document}